\pdfoutput=1

%

\documentclass[11pt]{amsart}
\usepackage[centertags]{amsmath}
\usepackage{amsfonts}
\usepackage{amssymb}
\usepackage{amsthm}
\usepackage{epsfig}
\usepackage{fullpage }
\usepackage{MnSymbol}
\usepackage{subfigure}
\usepackage{color}

\newtheorem{theorem}{Theorem}[section]
\newtheorem{proposition}[theorem]{Proposition}
\newtheorem{lemma}[theorem]{Lemma}

\theoremstyle{definition}
\newtheorem{definition}[theorem]{Definition}

\theoremstyle{remark}
\newtheorem{remark}[theorem]{Remark}
\newtheorem{example}[theorem]{Example}

\begin{document}

\newcommand{\G}{\mathbb{G}}
\newcommand{\T}{T}
\newcommand{\bs}{\backslash}

\newcommand{\C}{C}
\newcommand{\D}{\mathcal{D}}
\newcommand{\fD}{\mathfrak{D}}
\newcommand{\Int}{\mathrm{Int}}

\newcommand{\bi}{biseparation }
\newcommand{\jbi}{join-biseparation }
\newcommand{\jbit}{join-biseparation}

\newcommand{\jbis}{join-biseparations }
\newcommand{\bis}{biseparations }
\newcommand{\bit}{biseparation}
\newcommand{\bist}{biseparations}

\newcommand{\pbi}{plane-biseparation }
\newcommand{\pbis}{plane-biseparations }
\newcommand{\pbit}{plane-biseparation}
\newcommand{\pbist}{plane-biseparations}

\newcommand{\pjbi}{plane-join-biseparation }
\newcommand{\pjbis}{plane-join-biseparations }
\newcommand{\pjbit}{plane-join-biseparation}
\newcommand{\pjbist}{plane-join-biseparations}

\newcommand{\RPt}{$\mathbb{R}\mathrm{P}^2$}

\newcommand{\RP}{$\mathbb{R}\mathrm{P}^2$ }
\newcommand{\rp}{\mathbb{R}\mathrm{P}^2 }

\newcommand{\rpbi}{$\mathbb{R}\mathrm{P}^2$-biseparation }
\newcommand{\rpbit}{$\mathbb{R}\mathrm{P}^2$-biseparation}
\newcommand{\rpbis}{$\mathbb{R}\mathrm{P}^2$-biseparations }
\newcommand{\rpbist}{$\mathbb{R}\mathrm{P}^2$-biseparations}

\newcommand{\rpjbi}{$\mathbb{R}\mathrm{P}^2$-join-biseparation }
\newcommand{\rpjbis}{$\mathbb{R}\mathrm{P}^2$-join-biseparations }
\newcommand{\rpjbit}{$\mathbb{R}\mathrm{P}^2$-join-biseparation}
\newcommand{\rpjbist}{$\mathbb{R}\mathrm{P}^2$-join-biseparations}

\newcommand{\ga}{\gamma}


\title[]{Separability and the genus of a partial dual}

\author[I.~Moffatt]{Iain Moffatt$^\dagger$}

\begin{abstract}
Partial duality  generalizes the fundamental concept of the geometric dual of an embedded graph. A partial dual is obtained by forming the geometric dual with respect to only a subset of edges. While geometric duality preserves the genus of an embedded graph, partial duality does not. 
 Here we are interested in the problem of determining which edge sets of an embedded graph give rise to a partial dual of a given genus. 
 This problem turns out to be intimately connected to the separability of the embedded graph. We determine how separability is related to the genus of a partial dual. We use this to characterize partial duals of graphs embedded in the plane, and in the real projective plane, in terms of  a particular type of separation of an embedded graph. 
 These characterizations are then used to determine a local move relating all partially dual graphs in the plane and in the real projective plane.

\end{abstract}

\keywords{ $1$-sum, dual, embedded graph, join,   map amalgamation, partial dual, plane graph,  real projective plane,  ribbon graph, separability.}

\subjclass[2010]{Primary: 05C10. Secondary:  05C75.}

\thanks{
${\hspace{-1ex}}^\dagger$
	Department of Mathematics,
Royal Holloway,
University of London,
Egham,
Surrey,
TW20 0EX,
United Kingdom;\\
${\hspace{.35cm}}$ \texttt{iain.moffatt@rhul.ac.uk}}

\date{\today}

\maketitle

\section{Introduction}
The geometric dual $G^*$ of an embedded graph $G$ is a fundamental construction in graph theory, and is one that appears throughout mathematics. If $G$ is cellularly embedded in a surface $\Sigma$, then its dual $G^*$ is also cellularly embedded in $\Sigma$.  Thus geometric duality does not change the genus of an embedded graph.

Recently, in \cite{Ch1}, S.~Chmutov  introduced the concept of  partial duality, which is a far-reaching extension of geometric duality.   Roughly speaking,  the partial dual $G^A$ of an  embedded graph $G$ is obtained by forming the geometric dual with respect  to only a subset $A$ of edges  of the graph. (A formal definition is given Subsection~\ref{ss.pd}). The partial dual formed with respect to the entire edge set of $G$ is  the  geometric dual. 
Partial duality arose out of knot theory and has found a number of applications in graph theory, topology, and physics 
(see, for example, \cite{Ch1,EMM,EMM2,HM11,HMV11,KRVT09,Mo2,Mo3,Mo4,Mo5,VT,VT10}). 

Possibly the most immediate difference between geometric duality and its generalization, partial duality, is that while  geometric duality always preserves the  genus of an embedded graph, partial duality does not. For example, if $G$ is a plane graph, then $G^*$ is also a 
plane graph, but a partial dual $G^A$ of $G$ need not be plane. It is this property of partial duality that we are interested in here. We address the following problem:
\begin{itemize}
\item For an embedded graph $G$, determine the subsets $A\subseteq E(G)$ that give rise to a partial dual $G^A$ of a given genus.
\end{itemize}
This problem turns out to be intimately connected with the separability of the embedded graph $G$. 

Recall that a graph is said to be {\em separable} if it admits a decomposition into two non-empty, connected subgraphs that have only a vertex in common. Such a pair of subgraphs is called a {\em separation} of $G$, and the vertex where they meet is called a {\em separating vertex}. If $A\subseteq E(G)$, then we say that $A$ {\em defines} a separation if the induced subgraph  $G|_A$ and its complementary induced subgraph $G|_{E(G)\bs A}$ define a separation of $G$, {\em i.e.},   $G|_A$ and $G|_{E(G)\bs A}$ are non-empty, connected and intersect in a separating vertex. 

In general the induced subgraphs $G|_A$ and $G|_{E(G)\bs A}$ will not be connected. Here we extend the concept of a separation to cope with this situation, introducing the idea of a {\em \bit}.
(These are  defined formally in Subsection~\ref{ss.bi}.) Loosely speaking, a subset of edges $A$ defines a \bi of $G$ if it induces a decomposition of $G$ into two (not necessarily connected) subgraphs $G|_A$ and $G|_{E(G)\bs A}$ with the property that any pair of components of $G|_A$ and  $G|_{E(G)\bs A}$ have at most one vertex in common, and such a common vertex is a separating vertex of $G$. (See Definition~\ref{d1} and Example~\ref{e.bis}.) 

Equipped with this idea, we connect the genus of a partial dual and the separability of an embedded graph by showing that $A$ defines a \bi of an embedded graph $G$ if and only if the genus of the partial dual $G^A$ is determined (in a specific and simple way) by the genera of the induced subgraphs (see Theorem~\ref{t1}).
We  then apply this result to show that the classes of embedded graphs that are  partial duals of graphs embedded in the plane, or real projective plane \RPt, can be  completely characterized in terms the existence of  \bist. (See Theorem~\ref{t2}.)  These characterizations provide a common framework for working with partial duals of graphs in the plane and in \RPt. We  use this framework to characterize partially dual plane graphs and partially dual \RP graphs (Theorem~\ref{t3}), finding a local move on embedded graphs that relates all partially dual plane graphs and all partially dual \RP graphs (Theorem~\ref{t4}). In addition, we discuss why \bis fail to characterize partial duals of higher genus embedded graphs.

The relationships between separability and the genus of a partial dual presented here have their origins in knot theory. In \cite{Mo5}, \pbis were introduced  in order to characterize the class of ribbon graphs that present link diagrams (in the sense of \cite{Detal}), and to relate link diagrams that are represented by the same set of ribbon graphs.  Although the majority of the results presented here are original, the special cases in Sections~\ref{s4} and \ref{s5} that deal with  plane graphs  are from \cite{Mo5}. As the general results from Section~\ref{s3} connecting \bis and partial duals  provide a unified framework for working with partial duals of low genus graphs,  we include these  results  in order to present a complete picture.
 Full proofs for   known results are only given if the proof is new (as in the proof of Theorem~\ref{t2}).  Otherwise, it is indicated how to adapt the proof of the \RP case  to obtain the plane case, or, in a few cases, a reference to \cite{Mo5} is given.
 
This paper is structured as follows. Section~\ref{s2}  discusses embedded graphs and the various constructions we use in this paper. Section~\ref{s3} introduces \bist, determines their connections with the genus of a partial dual,   and  the set of \bis that an embedded graph admits is studied. In Section~\ref{s4}, partial duals of plane graphs and \RP graphs are characterized in terms of \bist, and the types of \bis that such embedded graphs admit are discussed.  Section~\ref{s5} is concerned with partially dual plane graphs and partially dual \RP graphs. Such graphs are characterized in terms of \bist, and a local move connecting them is given.

In this paper we mostly restrict our results to connected graphs, but the results extend easily to non-connected graphs.

\section{Ribbon graphs and partial duals}\label{s2}
This section contains a description of   basic objects and constructions (including ribbon graphs, partial duals and $n$-sums) that we use in this paper.

\subsection{Ribbon graph and arrow presentations}
In this subsection we review cellularly embedded graphs, ribbon graphs and their representations.


\begin{figure}
\centering
\subfigure[]{
\includegraphics[scale=.55]{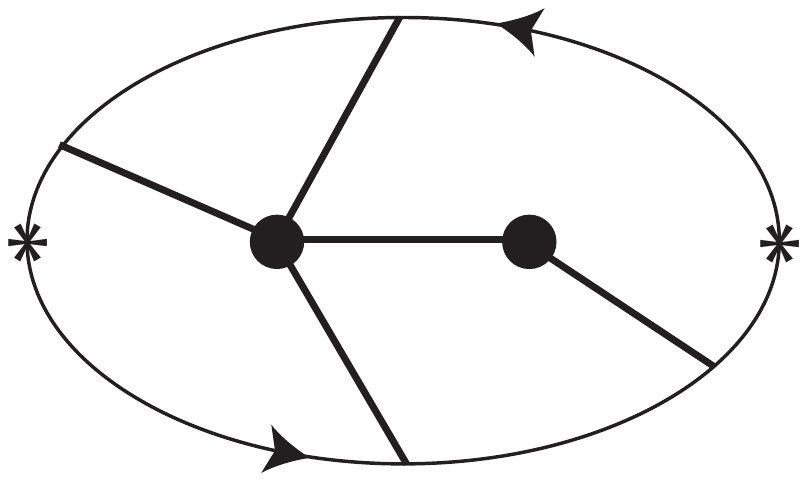}
\label{fig.rgexi}
}
\raisebox{10mm}{\;=\;}
\subfigure[]{
\includegraphics[scale=.55]{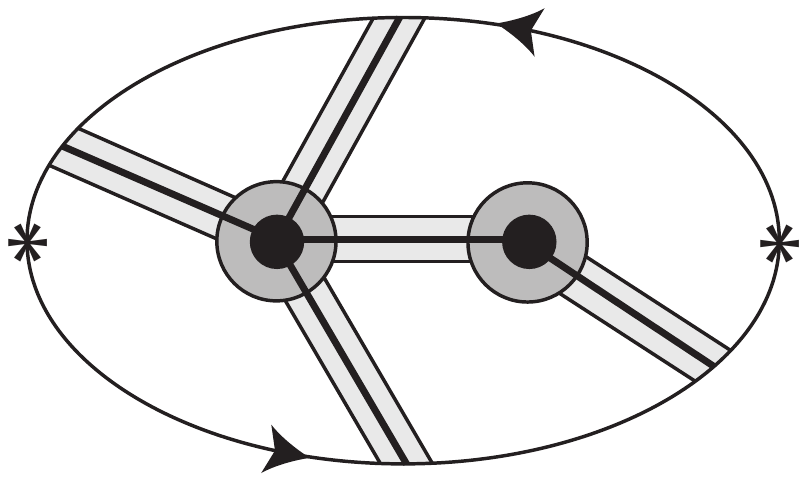}
\label{fig.rgexii}
}
\raisebox{10mm}{\;=\;}
\subfigure[]{
\includegraphics[scale=.4]{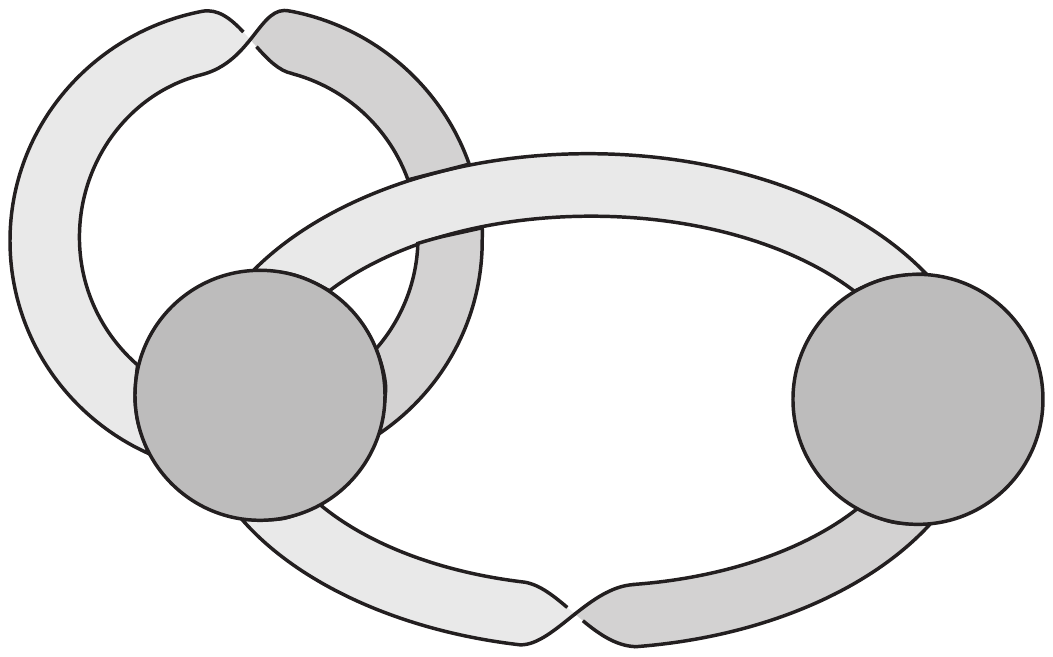}
\label{fig.rgexiii}
} \\
\raisebox{10mm}{\;=\;}
\subfigure[]{
\includegraphics[scale=.4]{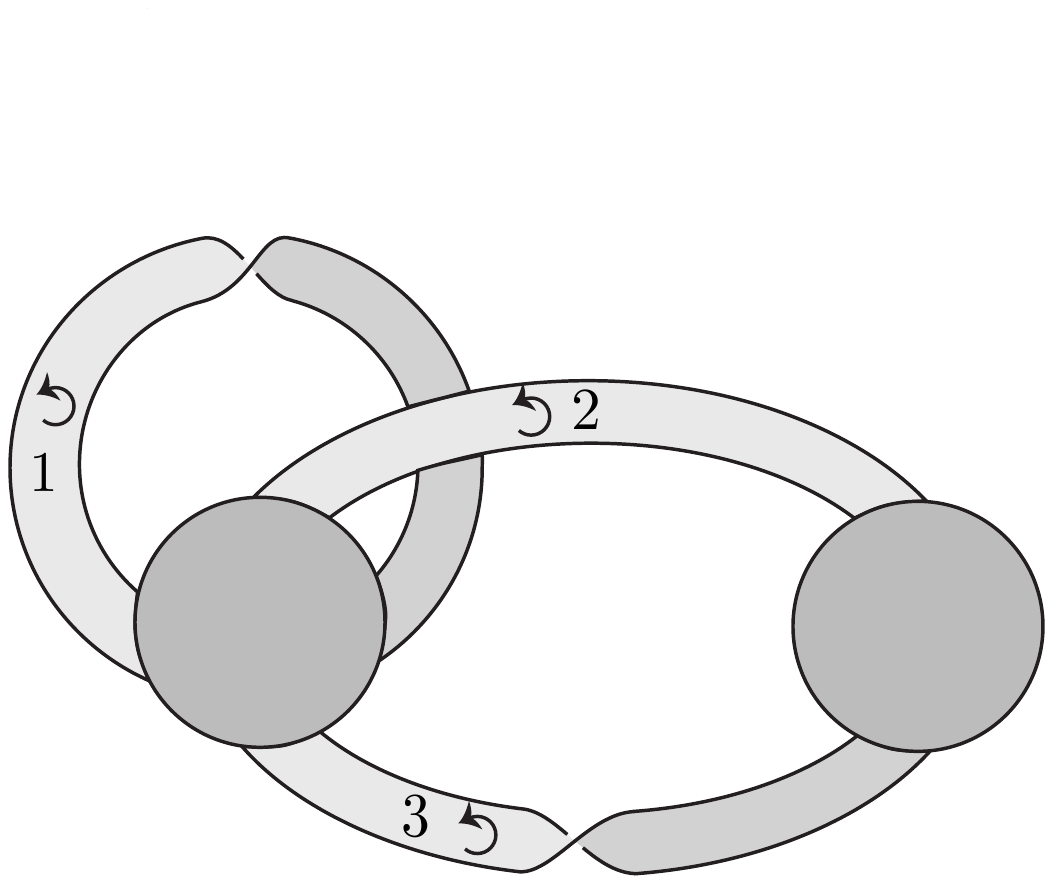}
\label{fig.rgexiv}
}
\raisebox{10mm}{\;=\;}
\subfigure[]{
\includegraphics[scale=.4]{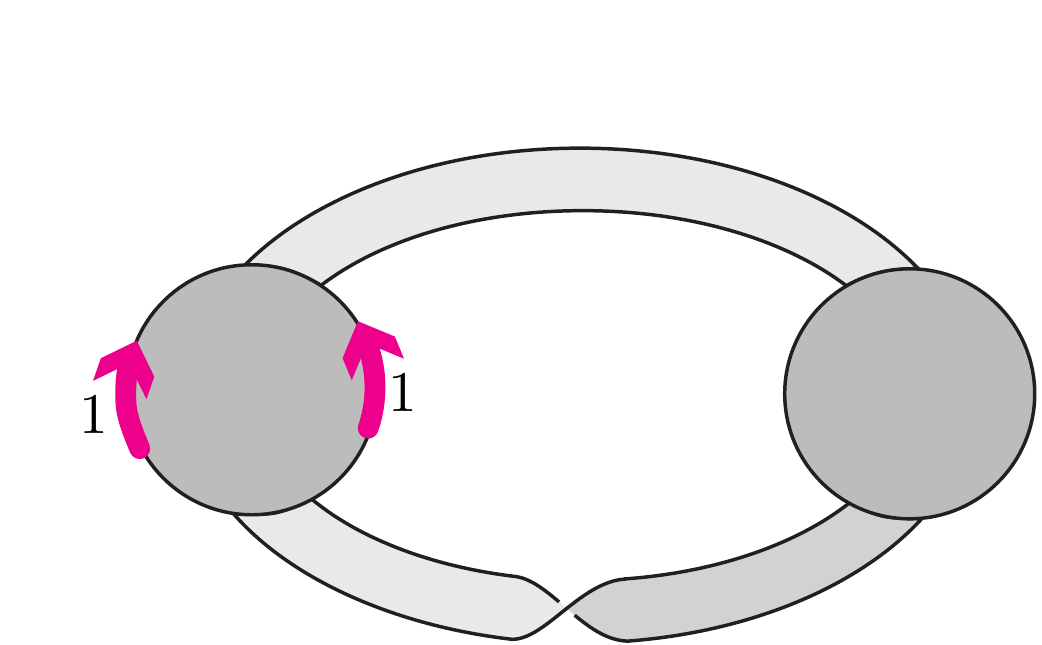}
\label{fig.rgexv}
}
\raisebox{10mm}{\;=\;}
\subfigure[]{
\raisebox{-2mm}{\includegraphics[scale=.5]{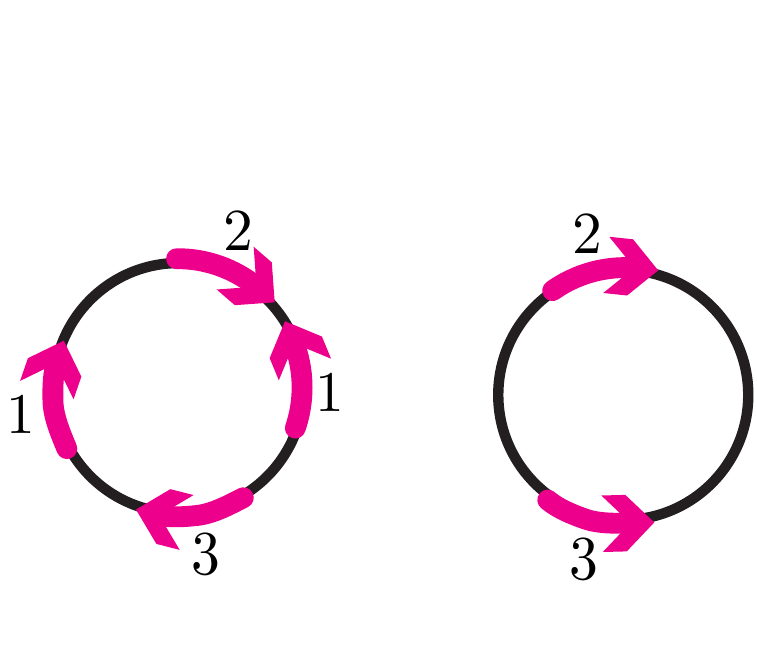}}
\label{fig.rgexvi}
}

\label{fig.rgex}
\caption{Different realizations of the same \RP graph.}
\end{figure}

\subsubsection{Embedded graphs}
An {\em embedded  graph} $G=(V(G),E(G)) \subset \Sigma$ is a graph drawn on a surface $\Sigma$  in such a way that edges only intersect at their ends. The arcwise-connected components of $\Sigma 
\backslash G$ are called the {\em regions} of $G$, and regions homeomorphic to discs are called {\em faces}. If each of the regions of an embedded graph $G$ is a face we say that $G$ is a {\em cellularly embedded graph}. (See Figure~\ref{fig.rgexi} which shows a graph cellularly embedded in the real projective plane.)
Two embedded graphs $G\subset \Sigma$ and  $G'\subset \Sigma'$  are 
 {\em equivalent}   if there is a homeomorphism from  $\Sigma$ 
to $\Sigma'$ that sends $G$ to $G'$. We consider embedded graphs up to equivalence.

We will also consider cellular embeddings of other objects. We say that an object $X\subset \Sigma$ is cellularly embedded if $\Sigma \backslash X$  is a set of discs.

\subsubsection{Ribbon graphs}
In this paper we will primarily work in the language of ribbon graphs. Ribbon graphs describe cellularly embedded graphs, but have the advantage that  deleting edges or vertices of a ribbon graph results in another ribbon graph, whereas deleting an edge of a cellularly embedded graph may not result in a cellularly embedded graph. 

\begin{definition}
A {\em ribbon graph} $G =\left(  V(G),E(G)  \right)$ is a (possibly non-orientable) surface with boundary represented as the union of two  sets of  discs, a set $V (G)$ of {\em vertices}, and a set of {\em edges} $E (G)$ such that: 
\begin{enumerate}
\item the vertices and edges intersect in disjoint line segments;
\item each such line segment lies on the boundary of precisely one
vertex and precisely one edge;
\item every edge contains exactly two such line segments.
\end{enumerate}
\end{definition}
 A ribbon graph is shown in Figure~\ref{fig.rgexiii}. The discs are considered up to homeomorphism.

Ribbon graphs are well-known to be (and easily seen to be)  equivalent to cellularly embedded 
graphs.  Intuitively, if $G$ is a cellularly embedded graph, a ribbon 
graph representation results from taking a small neighbourhood  of 
the cellularly embedded graph $G$. On the other hand, if $G$ is a 
ribbon graph,  simply sew discs into each boundary component of the 
ribbon graph  ({\em i.e.}, cap off the punctures) to get a ribbon graph embedded in a surface, and contract the ribbon graph to a graph.  See Figures~\ref{fig.rgexi}-\ref{fig.rgexiii}. 

Two ribbon graphs are {\em equivalent} if they define equivalent cellularly embedded graphs. Ribbon graphs are considered up to equivalence.

At times we will consider cellular embeddings of ribbon graphs. If $G$ is a ribbon graph, then, as  $G$ is topologically a punctured surface, a cellular embedding of $G$ is obtained by capping off its punctures.



\subsubsection{Arrow marked ribbon graphs}

We will need to be able to remove edges from a ribbon graph without 
losing any information about their positions. We  do 
this by recording the position of the edges using labelled arrows.

\begin{definition}
An {\em arrow-marked ribbon graph} consists of a ribbon graph  
equipped with a collection of  labelled arrows, called {\em marking 
arrows}, on the boundaries of its vertices. The marking arrows are 
such that no marking arrow meets an edge of the ribbon graph, and    
there are exactly two marking arrows with each label.
\end{definition}

\begin{figure}
\begin{center}
\begin{tabular}{ccccc}
\includegraphics[height=15mm]{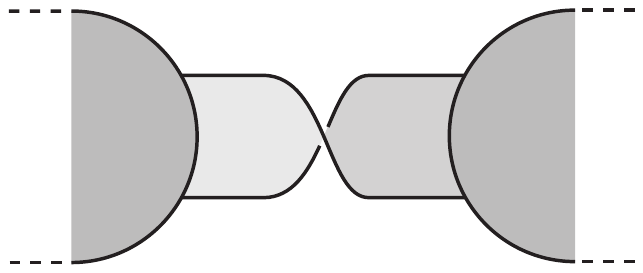}  &
\raisebox{6mm}{\includegraphics[width=11mm]{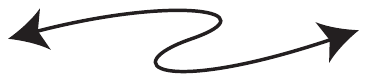}} &
\includegraphics[height=15mm]{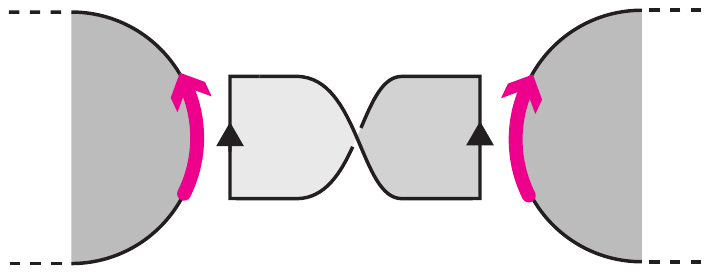}  &
\raisebox{6mm}{\includegraphics[width=11mm]{doublearrow}} &
\includegraphics[height=15mm]{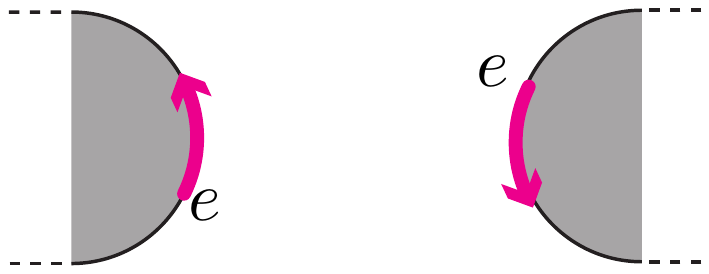} 
\\
$G=H\,\vec{+}\{e\}$  &&&&$G\,\vec{-}\{e\}=H$
\end{tabular}
\end{center}
\caption{Constructing $G\,\vec{-}\{e\}$ and $H\,\vec{+}\{e\}$. }
\label{arrows}
\end{figure}

Let $G$ be a ribbon graph and $A\subseteq E(G)$. Then we let 
$G\,\vec{-} \, A$ denote the arrow-marked ribbon graph obtained, for 
each edge $e\in A$,  as follows: arbitrarily orient the boundary of 
$e$; place an arrow on each of the two arcs where $e$ meets vertices 
of $G$, such that the directions of these arrows follow the 
orientation of the boundary of $e$; label the two arrows with $e$; 
and delete the edge $e$. This process is illustrated, locally at an 
edge, in Figure~\ref{arrows}.

Conversely, given an arrow-marked ribbon graph $H$ with set of labels 
$A$, we can recover a ribbon graph  $H\vec{+}A$ as follows: for each 
label $e\in A$, take a disc and orient its boundary arbitrarily; add 
this disc to the ribbon graph by choosing two non-intersecting arcs 
on the boundary of the disc and the two $e$-labelled marking arrows, and then identifying the arcs with the marking arrows 
according to the orientation of the arrow. The disc that has been 
added forms an edge of a new ribbon graph. Again, this process is 
illustrated in Figure~\ref{arrows}.

See Figures~\ref{fig.rgexiii}-\ref{fig.rgexv} for an example of a ribbon graph and its description as an arrow marked ribbon graph. Further examples can be found in  Figures  \ref{f.ampd1} and \ref{f.ampd2}, and in  Figures \ref{f.ampd3} and \ref{f.ampd4}.

Every arrow-marked ribbon graph corresponds
to a ribbon graph. We  say that two arrow-marked ribbon graphs 
are {\em equivalent} if the ribbon graphs they describe are 
equivalent. We consider arrow-marked ribbon graphs up to equivalence.

We will generally abuse notation and regard the set of labels of an 
arrow-marked ribbon graph as a set of edges. This will allow us to 
view $A$ as an edge set in expressions like $G= \left(G\,\vec{-} \,A\right) \vec{+} A$.

\subsubsection{Arrow presentations}\label{ss.ap}

Every  ribbon graph $G$ has a representation as an arrow-marked ribbon graph $G\vec{-}E(G)$.   In such cases, to describe $G$ it is enough to record only the marked boundary cycles of the vertex set (to recover the vertex set, just place each cycle on the boundary of a disc).  Thus a ribbon graph can be presented as a set of cycles with marking arrows on them. In such a structure, there are exactly two marking arrows with each label. 
Such a structure is called an {\em arrow presentation}.  Formally:
\begin{definition}
An  {\em arrow presentation} of a ribbon graph consists of a set of oriented (topological) circles (called {\em cycles}) that are marked with coloured arrows, called {\em marking arrows}, such that there are exactly two marking arrows of each colour.   
\end{definition} 

A ribbon graph can be recovered from an arrow presentation by regarding the marked cycles as boundaries of discs, giving an arrow-marked ribbon graph.
An example of a ribbon graph and its representation as an arrow presentation is given in 
Figure~\ref{fig.rgexiii} and \ref{fig.rgexvi}. Arrow presentations are {\em equivalent} if they describe the same ribbon graph, and are considered up to equivalence.

\subsubsection{Subgraphs}

A ribbon graph $H =\left(  V(H),E(H)  \right)$ is a {\em ribbon subgraph} of $G =\left(  V(G),E(G)  \right)$ if $H$ can be obtained by deleting vertices and edges of $G$. If $V(H)=V(G)$, then $H$ is a  {\em spanning ribbon subgraph} of $G$.  If $A\subseteq E(G)$, then the ribbon subgraph {\em induced} by $A$, denoted $G|_A$, is the ribbon subgraph of $G$ that consists of the edges in $A$ and their incident vertices. 
We will often regard  $H$ as being embedded in $G$, and will often identify the vertices and edges of $H$ with the corresponding vertices and edges of $G$. 

Throughout the paper we use $A^c:=E(G)\bs A$  to denote the complement of $A\subseteq E(G)$.

\subsubsection{Genus}

A ribbon graph is said to be {\em orientable} if it is orientable when viewed as a surface.  Similarly, the {\em genus}, $g(G)$, of a ribbon graph $G$ is its genus when viewed as a punctured surface. Note that the genus of a ribbon graph is the sum of the genera of its components. 

The genus of a surface is {\em not} additive under connected sums. (See Subsection~\ref{ss.bgpd}, just after Lemma~\ref{l1}, for a recap of the connected sum and some relevant facts on the topology of surfaces.) For example the connected sum of a torus and a real projective plane, which, are both surfaces of genus $1$, is homeomorphic to the connected sum of three real projective planes, a surface of genus $3$. To get around this technical difficulty, rather than writing our formulae in term of genus, we write it in terms of the {\em Euler genus}, $\gamma$, which is additive under the connected sum. If $G$ is a connected ribbon graph, then
\[\ga(G):=\left\{  \begin{array}{rl}  2g(G), & \text{if } $G$  \text{ is orientable};  \\  g(G), & \text{if } $G$  \text{ is non-orientable}.   \end{array}  \right. \]
If $G$ is not connected then, $\gamma(G)$ is defined as the sum of the value of $\gamma$ of each of its components.

We say that a ribbon graph $G$ is a {\em plane ribbon graph} if it is connected and $\gamma(G)=0$; and is a {\em \RP ribbon graph} if it is connected and $\gamma(G)=1$. (Note that here we insist that plane and \RP ribbon graphs are connected, which is not always the case in the literature.)

A cellularly embedded graph $G\subset \Sigma$ is a {\em plane graph} if $\Sigma$ is the  $2$-sphere, $S^2$; and is an {\em \RP graph} if $\Sigma$ is the real projective plane, \RPt. Plane ribbon graphs and plane cellularly embedded graphs correspond to one another, as do \RP ribbon  graphs and \RP cellularly embedded graphs.
This equivalence allows us to abuse notation and write `plane graph' for `plane ribbon graph', and `\RP graph' for `\RP ribbon graph'. This should cause no confusion.

We let $\chi(X)$ denote the Euler characteristic of a cellularly embedded graph,  ribbon graph or   surface $X$.

\subsubsection{Geometric duals}\label{ss.duals}

Let $G\subset \Sigma$ be a cellularly embedded graph. Recall that its {\em geometric dual} $G^*\subset \Sigma$ is the cellularly embedded graph obtained from $G$ by  placing one 
vertex in each of its faces, and embedding an edge of $G^*$ between two of these
vertices  whenever the faces of $G$ they lie in are adjacent. Edges of $G^*$ are embedded so that they cross the corresponding face boundary (or edge of $G$) transversally.   There is a natural bijection between the edges of $G$ and the edges of $G^*$. We  use this bijection to identify the edges of $G$ and the edges of $G^*$. Observe that $\gamma(G)=\gamma(G^*)$, and that duality acts disjointly on the components of a cellularly embedded graph.

Geometric duals have a particularly neat description in the language of ribbon graphs. Given a ribbon graph $G=(V(G), E(G))$, regard it as a punctured surface. Fill in the punctures using a set $V(G^*)$ of discs to obtain a closed surface. Delete the vertices in $V(G)$ from this surface. The resulting ribbon graph is the geometric dual $G^*=(V(G^*),E(G))$.   

Observe that if $G$ is an arrow-marked ribbon graph then, every marking arrow on a vertex of $G$ gives rise to a marking arrow on a vertex of $G^*$. We will use this observation later.

\subsection{Partial duality}\label{ss.pd}
In this subsection we describe partial duality and its basic properties. 

\begin{definition}[Chmutov \cite{Ch1}]\label{d.pd}
Let $G$ be a  ribbon graph and $A\subseteq E(G)$. Arbitrarily orient and label each of the edges of $G$ (the orientation need not extend to an orientation of the ribbon graph). The boundary components of the spanning ribbon subgraph $(V(G), A)$ of $G$ meet the edges of $G$ in disjoint arcs (where the spanning ribbon subgraph is naturally embedded in $G$). On each of these arcs, place an arrow which points in the direction of the orientation of the edge boundary  and is labelled by the edge it meets. 
The resulting marked boundary components of the spanning ribbon subgraph $(V(G), A)$ define an  arrow presentation. 
The  ribbon graph corresponding to this  arrow presentation is the {\em partial dual} $G^A$ of $G$.
\end{definition}

An example of a partial dual formed using Definition~\ref{d.pd} is shown in Figure~\ref{f.chpd}. In the figure, $G$ is an \RP ribbon graph, $A=\{3\}$, and $G^A$ is a non-orientable ribbon graph of genus $3$.

\begin{figure}
\centering
\subfigure[A ribbon graph $G$.]{
\includegraphics[scale=.4]{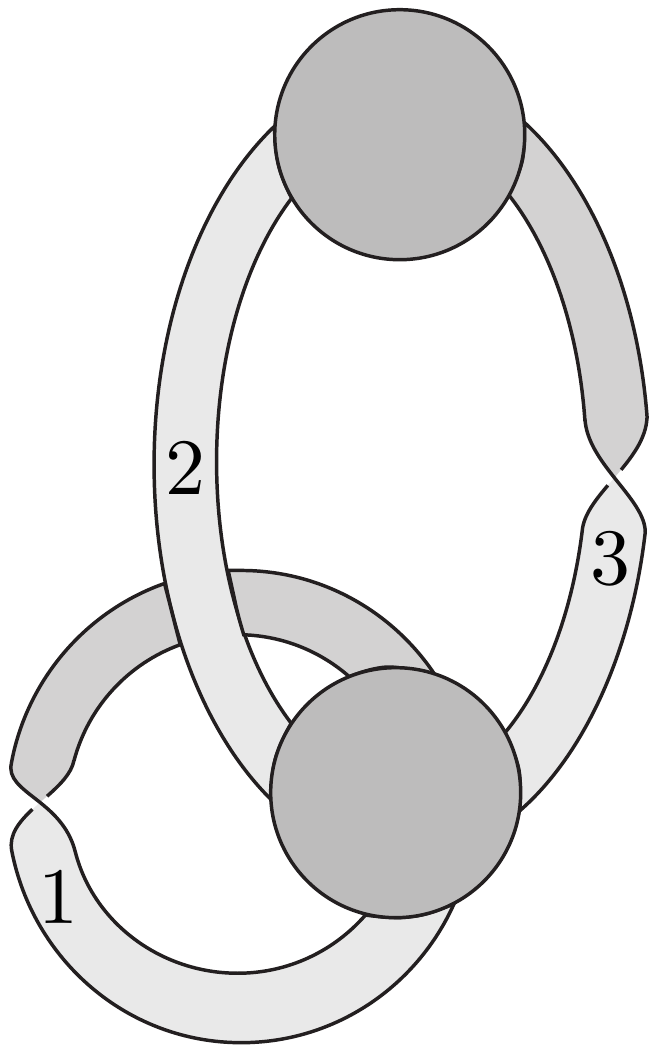}
\label{f.chpd1}
}
\hspace{10mm}
\subfigure[ The spanning ribbon subgraph $G-\{1,2\}$.]{
\includegraphics[scale=.45]{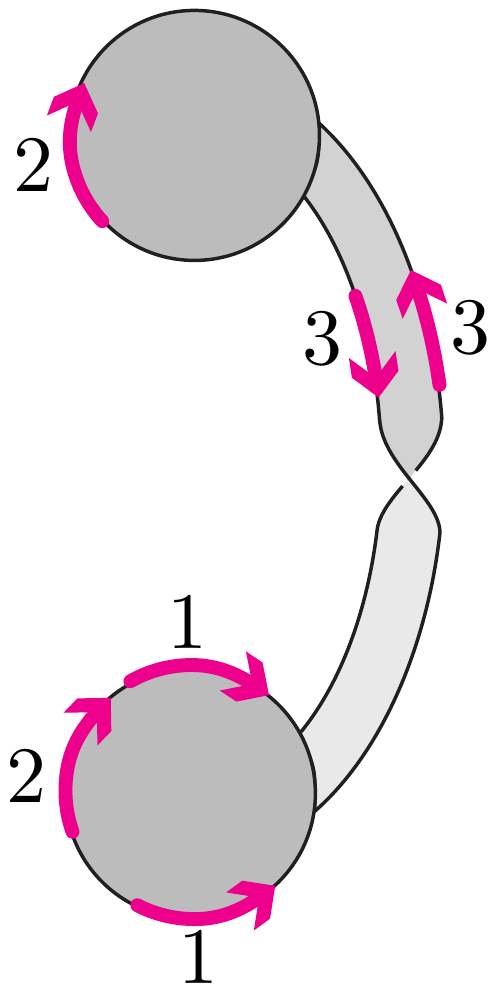}
\label{f.chpd2}
}
\hspace{10mm}
\subfigure[Its boundary component.]{
\raisebox{10mm}{\includegraphics[scale=.5]{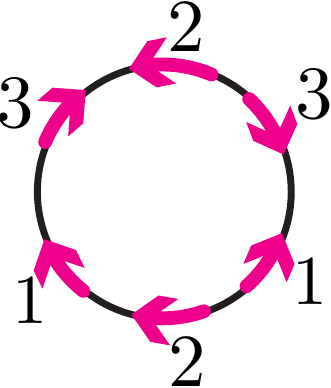}}
\label{f.chpd3}
}
\hspace{10mm}
\subfigure[The partial dual $ G^{\{3\}}$.]{
\raisebox{6mm}{\includegraphics[scale=.5]{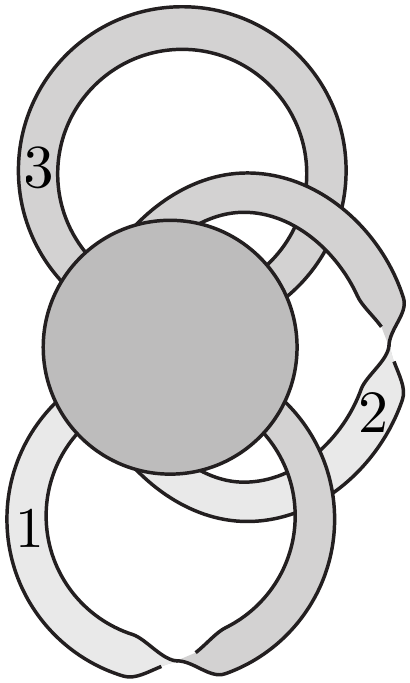}}
\label{f.chpd4}
}
\caption{Forming a partial dual using spanning ribbon subgraphs.}
\label{f.chpd}
\end{figure}

The idea behind a partial dual $G^A$ is to form the dual of $G$ with respect to only a subset $A$ of its edges. This can be achieved by deleting the edges in $A^c$ from $G$, recording their positions using marking arrows (giving $G\,\vec{-} \,A^c$); forming the geometric dual of this arrow-marked ribbon graph, retaining the marking arrows on the boundary (giving $\left(G\,\vec{-} \,A^c\right)^*$); and then obtaining $G^A$ by adding the edges in $A^c$ (giving $ \left(G\,\vec{-} \,A^c\right)^* \vec{+}\, A^c$). This gives:
\begin{proposition}[\cite{Mo4}]\label{p3}
Let $G$ be a ribbon graph and $A\subseteq E(G)$. Then 
\[ G^A:= \left(G\,\vec{-} \,A^c\right)^* \vec{+}\, A^c.\]
\end{proposition}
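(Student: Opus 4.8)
The plan is to unwind both sides of the claimed identity using Chmutov's definition (Definition~\ref{d.pd}) together with the ribbon-graph description of geometric duality from Subsection~\ref{ss.duals}, and to check that the two arrow presentations produced coincide. The key observation is that Definition~\ref{d.pd} constructs $G^A$ from an arrow presentation whose underlying cycles are the \emph{boundary components of the spanning ribbon subgraph $(V(G),A)$}, marked by arrows recording the edges in $A^c$ (oriented according to the chosen orientations of those edges). So the whole argument reduces to showing that the arrow-marked ribbon graph $\left(G\,\vec{-}\,A^c\right)^* \vec{+}\, A^c$ has exactly this arrow presentation.

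First I would analyse $G\,\vec{-}\,A^c$: deleting the edges in $A^c$ from $G$ (while recording their feet as marking arrows) leaves precisely the spanning ribbon subgraph $(V(G),A)$, equipped with one pair of marking arrows for each $e\in A^c$ sitting on the arcs where $e$ formerly met the vertices. The orientations of these arrows can be taken to follow the orientations of the edges $e$, so that they agree with the arrows prescribed in Definition~\ref{d.pd}. Next, forming the geometric dual $\left(G\,\vec{-}\,A^c\right)^*$: by the ribbon-graph description of duality, one caps off the boundary components of $(V(G),A)$ with new dual vertices and deletes the old vertices. Using the observation recorded at the end of Subsection~\ref{ss.duals} — that every marking arrow on a vertex of a ribbon graph $H$ induces a marking arrow on a vertex of $H^*$ — the $A^c$-marking arrows are carried over to lie on the new dual vertices, i.e.\ on (discs bounded by) the boundary cycles of $(V(G),A)$. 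Thus $\left(G\,\vec{-}\,A^c\right)^*$, read as an arrow presentation, consists exactly of the boundary cycles of $(V(G),A)$ marked by the $A^c$-arrows — which is precisely the arrow presentation in Definition~\ref{d.pd}. Finally, applying $\vec{+}\, A^c$ reattaches the edges of $A^c$ along these marking arrows, and by definition this yields the ribbon graph corresponding to that arrow presentation, namely $G^A$.

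I would also record the boundary cases, which are immediate: if $A^c=\emptyset$ the formula reads $G^A = G^{E(G)} = G^{*}$ on the one hand and $\left(G\right)^{*}$ on the other (no arrows, no $\vec{+}$), so it holds; if $A^c = E(G)$ the formula reads $G^{\emptyset} = G$ versus $\left(G\,\vec{-}\,E(G)\right)^* \vec{+}\, E(G)$, and here one uses that $\left(G\,\vec{-}\,E(G)\right)^*$ is the arrow presentation on the boundary cycles of the disjoint-vertex graph $(V(G),\emptyset)$, i.e.\ the arrow presentation of $G$ itself (this is the dual-of-edgeless fact), so reattaching all edges recovers $G$.

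The main obstacle is bookkeeping rather than anything conceptual: one must check that the \emph{directions} of the marking arrows are consistent at every stage — that the arrow orientations induced on $\left(G\,\vec{-}\,A^c\right)^*$ from duality match the arrow orientations demanded by Definition~\ref{d.pd}, which in turn match the (arbitrary but fixed) orientations of the edges in $A^c$. Since reversing both arrows of a label simultaneously gives an equivalent arrow-marked ribbon graph, and since the duality-induced arrows and the definition's arrows both descend from the same edge orientations, this consistency holds; but making it airtight requires carefully tracking how an orientation of an edge boundary meeting a vertex arc translates, under capping-off, into an orientation of the corresponding arc on the dual vertex. I would handle this by fixing orientations once at the outset and observing that every operation ($\vec{-}$, $*$, $\vec{+}$) is defined so as to transport these orientations coherently, so no sign discrepancy can arise.
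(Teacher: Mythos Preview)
Your approach matches the paper's own treatment: the paper does not give a formal proof here but cites \cite{Mo4}, offering only the informal paragraph preceding the proposition, which is exactly the ``delete, dualize, reattach'' idea you are making precise. So the strategy is right.

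There is, however, a genuine slip in your reading of Definition~\ref{d.pd}. You assert that the arrow presentation produced there ``consists exactly of the boundary cycles of $(V(G),A)$ marked by the $A^c$-arrows''. That is not what the definition says: it instructs us to place an arrow on \emph{every} arc where a boundary component of $(V(G),A)$ meets an edge of $G$, and these arcs arise from \emph{all} edges of $G$, not only those in $A^c$. Concretely, for $e\in A$ the boundary of $(V(G),A)$ runs along the two ``long sides'' of $e$, so $e$ contributes two arrows as well; for $e\in A^c$ the boundary meets $e$ along the two arcs where $e$ attached to vertices. The resulting arrow presentation therefore has a pair of arrows for every edge of $G$, and the ribbon graph it encodes has edge set $E(G)$, as it must.

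This matters because your parallel description of $\left(G\,\vec{-}\,A^c\right)^*$ is similarly incomplete: that object is an arrow-marked ribbon graph with \emph{edges} $A$ (inherited through duality) together with marking arrows for $A^c$; it is not just ``cycles with $A^c$-arrows''. The fix is easy and in fact makes the comparison cleaner: when you pass $\left(G\,\vec{-}\,A^c\right)^*$ to a full arrow presentation by also converting the $A$-edges to arrows, those $A$-arrows land precisely on the long sides of the $A$-edges, i.e.\ exactly where Definition~\ref{d.pd} places them. With that correction the two arrow presentations agree label by label and arc by arc, and your orientation-bookkeeping paragraph then finishes the argument.
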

An example of the formation of a partial dual $G^A$ using Proposition~\ref{p3} is given in Figure~\ref{f.ampd}. In the figure, $G$ is an \RP ribbon graph, $A=\{3,4\}$, and $G^A$ is non-orientable and of genus $4$.


\begin{figure}
\centering
\subfigure[A ribbon graph $G$.]{
\includegraphics[scale=.5]{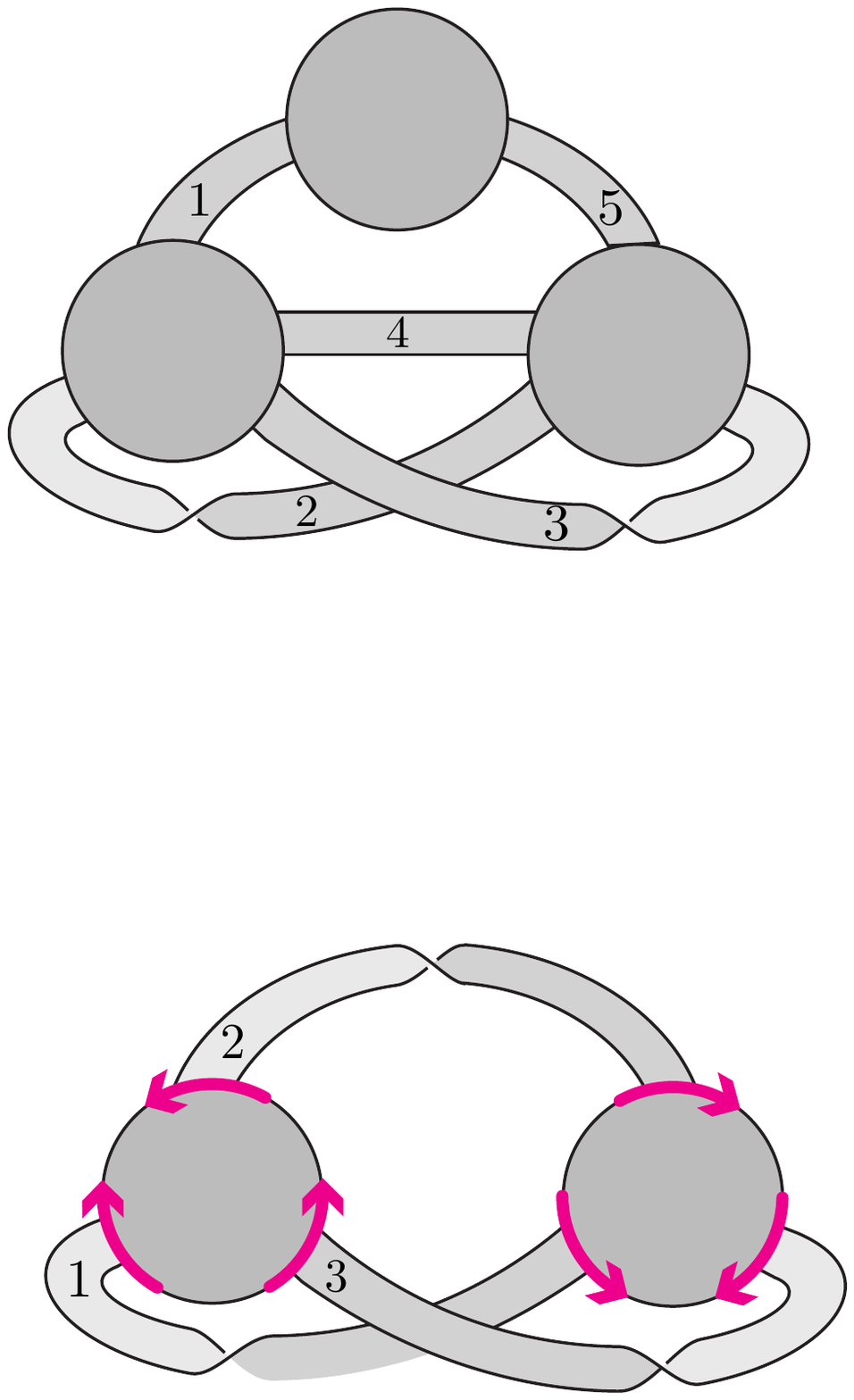}
\label{f.ampd1}
}
\hspace{10mm}
\subfigure[$G\,\vec{-}A^c$ with $A^c=\{1,2,5\}$. ]{
\includegraphics[scale=.5]{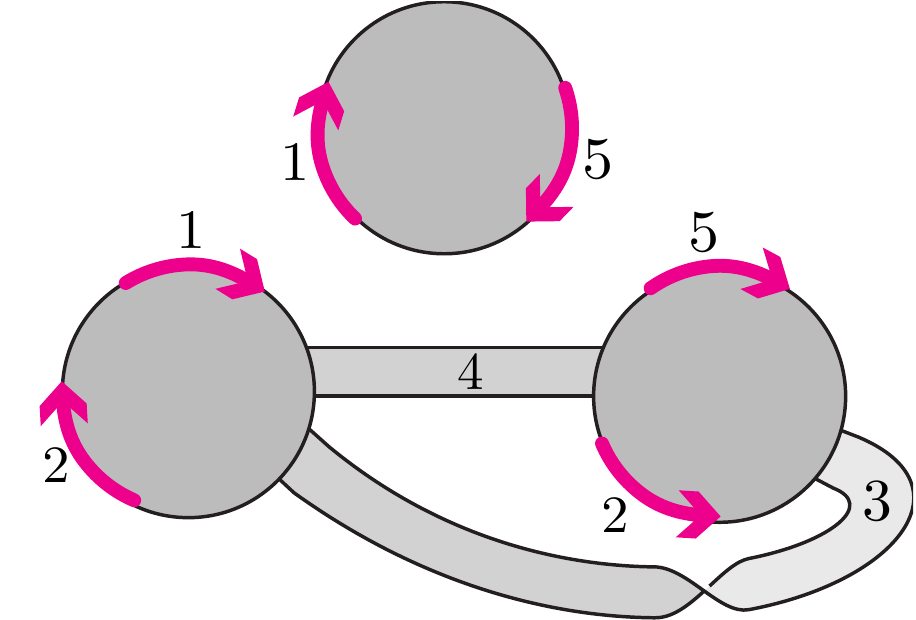}
\label{f.ampd2}
}\\
\subfigure[The geometric dual $(G\,\vec{-}A^c)^*$.]{
\includegraphics[scale=.5]{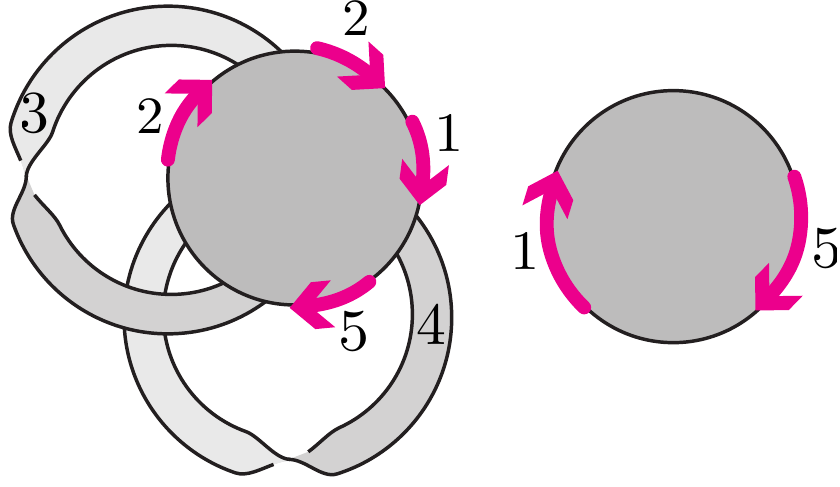}
\label{f.ampd3}
}
\hspace{10mm}
\subfigure[$(G\,\vec{-}A^c)^*\,\vec{+}A^c = G^{\{3,4\}}=G^A $.]{
\includegraphics[scale=.5]{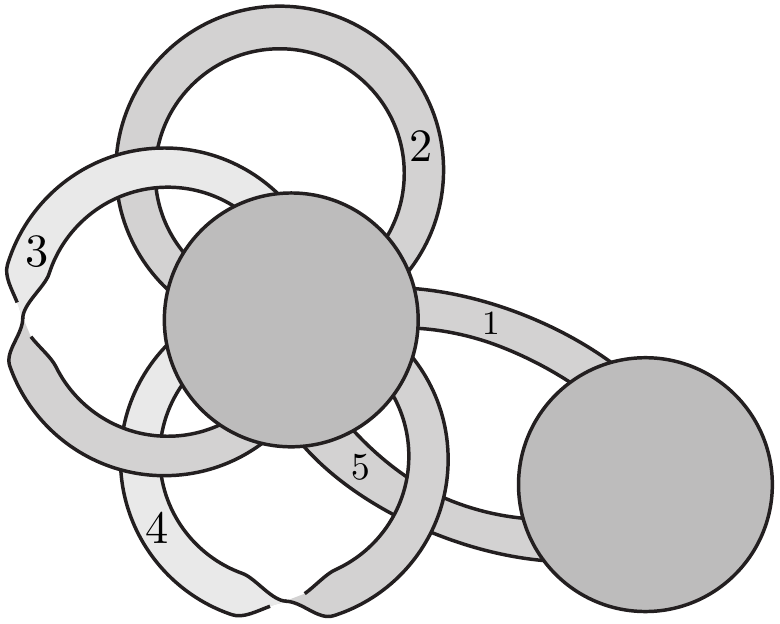}
\label{f.ampd4}
}
\caption{Forming a partial dual using arrow-marked ribbon graphs.}
\label{f.ampd}
\end{figure}

We will use the following basic properties of partial duals.
\begin{proposition}[Chmutov~\cite{Ch1}]\label{p.pd2}
Let $G$ be a ribbon graph and $A, B\subseteq E(G)$.  Then the following hold.
\begin{enumerate}
\item \label{p.pd2.1} $G^{\emptyset}=G$.
\item \label{p.pd2.2}  $G^{E(G)}=G^*$, where $G^*$ is the geometric dual of $G$.
\item \label{p.pd2.3} $(G^A)^B=G^{A\Delta B}$, where $A\Delta B := (A\cup B)\backslash  (A\cap B)$ is the symmetric difference of $A$ and $B$.
\item $G$ is orientable if and only if $G^A$ is orientable.
\item Partial duality acts disjointly on components, {\em i.e.} $ (P\sqcup Q)^A = (P^{A \cap E(P)} ) \sqcup (Q^{A \cap E(Q)})$.
\item Partial duals can be formed one edge at a time.
\item  There is a natural 1-1 correspondence between the edges of $G$ and the edges of $G^A$.
\end{enumerate}
\end{proposition}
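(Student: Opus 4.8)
The final statement to prove is Proposition~\ref{p.pd2}, listing seven basic properties of partial duals. The plan is to verify each property directly from Definition~\ref{d.pd} (or from the equivalent formulation in Proposition~\ref{p3}), handling the arrow-presentation bookkeeping carefully.

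For parts (1)--(3): Part (1), $G^{\emptyset}=G$, follows because when $A=\emptyset$ the spanning ribbon subgraph $(V(G),\emptyset)$ is just the vertex discs, whose boundary components carry marking arrows recording exactly the original edge attachments, so reconstructing from this arrow presentation returns $G$. Part (2), $G^{E(G)}=G^*$, is immediate from the ribbon-graph description of the geometric dual: with $A=E(G)$, the spanning ribbon subgraph is all of $G$, and capping its boundary components with discs and deleting the old vertices is precisely the construction of $G^*$ recalled in Subsection~\ref{ss.duals}. Part (3), the key relation $(G^A)^B=G^{A\Delta B}$, is the substantive one; I would prove it via Proposition~\ref{p3}, reducing to the two cases where $B$ consists of a single edge $e$: either $e\in A$ or $e\notin A$. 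In each case one checks, tracking the marking arrows through the ``delete, dualize, re-add'' procedure, that dualizing again with respect to $e$ toggles whether $e$ lies in the dualized set, which is exactly the effect of symmetric difference. Alternatively, and more cleanly, one argues that forming $G^A$ edge-by-edge (part (6)) together with the single-edge identities $(G^{\{e\}})^{\{e\}}=G$ and commutativity of single-edge partial duals yields (3) by an easy induction on $|B|$.

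For parts (4)--(7): Part (4), orientability is preserved, follows because an orientation of $G$ as a surface restricts to the spanning ribbon subgraph $(V(G),A)$, inducing consistent orientations on its boundary components; these orient the cycles of the arrow presentation compatibly, and the reconstructed ribbon graph $G^A$ inherits an orientation. Conversely, since $(G^A)^A=G$ by part (3), orientability of $G^A$ forces orientability of $G$. Part (5), disjointness on components, is clear from Definition~\ref{d.pd}: the spanning ribbon subgraph, its boundary components, and the resulting arrow presentation all decompose according to the connected components of $G$, and $A\cap E(P)$, $A\cap E(Q)$ are the relevant edge subsets. Part (6), that partial duals can be formed one edge at a time, is a restatement of the special case of (3) with $B=A$ decomposed as a sequence of singletons: $G^{\{e_1,\dots,e_k\}}=(\cdots((G^{\{e_1\}})^{\{e_2\}})\cdots)^{\{e_k\}}$, so I would note that (6) is subsumed by (3). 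Part (7), the natural bijection between edges of $G$ and $G^A$, holds because the arrow presentation defining $G^A$ uses exactly the label set $E(G)$, with two marking arrows per label, and these labels become the edges of $G^A$; this bijection is compatible with the one for geometric duals in Subsection~\ref{ss.duals}.

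The main obstacle is part (3): the other six are essentially immediate unwindings of the definitions, but (3) requires genuine care with the identification of marking arrows under the dual operation. The cleanest route, which I would adopt, is to first establish (6) and the involutivity and commutativity of single-edge partial duals directly from Proposition~\ref{p3} and the ribbon-graph description of $G^*$ (using the observation at the end of Subsection~\ref{ss.duals} that marking arrows on vertices of $G$ pass to marking arrows on vertices of $G^*$), and then deduce (3) in general by induction on $|A|+|B|$. Since this proposition is attributed to Chmutov~\cite{Ch1}, it would also be legitimate simply to cite \cite{Ch1} for (1)--(6) and remark that (7) is built into the construction; but I would include the short verifications above for completeness.
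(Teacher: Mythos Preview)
Your proposal is correct, but note that the paper does not actually prove Proposition~\ref{p.pd2}: it simply states the result with attribution to Chmutov~\cite{Ch1} and moves on. So there is nothing in the paper to compare your argument against beyond the citation itself.

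Your sketches for (1), (2), (4), (5), and (7) are accurate unwindings of Definition~\ref{d.pd} and Proposition~\ref{p3}. For (3) and (6) there is a mild circularity in your presentation (you invoke (6) as the ``cleanest route'' to (3), and then say (6) is ``subsumed by (3)''), but you correctly resolve this in your final paragraph: one first proves directly from Proposition~\ref{p3} that single-edge partial duals commute and are involutive, and that $G^A$ can be built one edge at a time; (3) then follows by induction. This is exactly the standard argument in \cite{Ch1}. Given that the paper is content to cite \cite{Ch1}, your suggestion at the end --- cite \cite{Ch1} and note that (7) is built into the construction --- matches the paper's treatment precisely, while the fuller verifications you outline would be a harmless addition.
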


\subsection{$n$-sums of ribbon graphs}
In this subsection we discuss $n$-sums of ribbon graphs, which   are   natural extensions of the corresponding operations for graphs.  In the next section we will use $1$-sums to introduce the concept of a $\bi$ of a ribbon graph and use it to determine the genus of a partial dual, providing a connection between the genus  and  separability.

\subsubsection{$n$-sums, $1$-sums, and joins}
We begin by describing $n$-sums and joins of ribbon graphs. These form the foundations of the decompositions of ribbon graphs considered here.

\begin{definition}\label{d.ns}
Let $G$ be a connected ribbon graph, $v_1, \ldots, v_n \in V(G)$, and let $P$ and $Q$ be non-trivial, connected ribbon subgraphs of $G$. Then $G$ is said to be an {\em $n$-sum} of $P$ and $Q$, written $G=P\oplus_n Q$, if $G=P\cup Q$ and $P\cap Q=\{v_1, \ldots, v_n\}$. (See Figure~\ref{f.sum}.) The $n$-sum is said to {\em occur} at the vertices $v_1, \ldots, v_n$. 
\end{definition}

\begin{figure}
\begin{tabular}{ccccc}
\includegraphics[height=2cm]{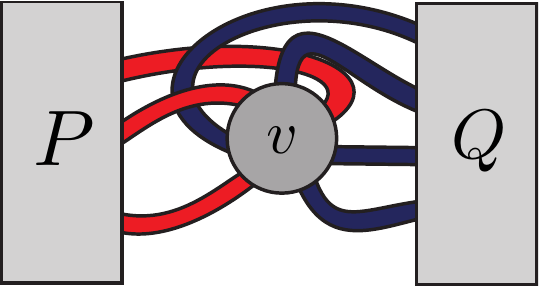} & \hspace{1cm} & \includegraphics[height=2cm]{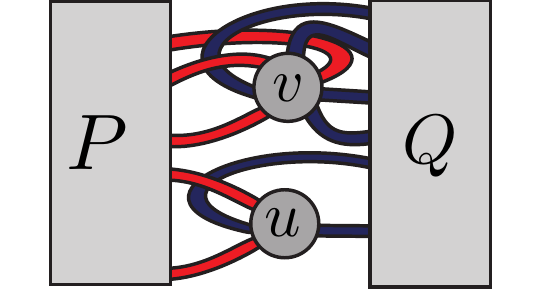} 
& \hspace{1cm} & \includegraphics[height=2cm]{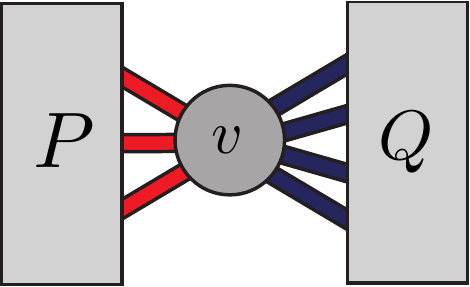}  \\
 A $1$-sum  $P\oplus Q$. && A $2$-sum  $P\oplus_2 Q$. &&A join $P\vee Q$.
\end{tabular}
\caption{A $1$-sum, a $2$-sum and a join of two ribbon graphs $P$ and $Q$.}
\label{f.sum}
\end{figure}

An $n$-sum, $G=P\oplus_n Q$, is defined as a decomposition of $G$ into ribbon subgraphs $P$ and $Q$. This means that we can, and will, identify the edges in $P$ and $Q$ with edges in $G$. Similarly, we can, and will, identify the vertices of $P$ and $Q$ with vertices of $G$.  The vertices $v_1, \ldots, v_n$ at which the $n$-sum occurs are the only vertices of $G$ that appear in both $P$ and $Q$. 

We can also view an $n$-sum as a way to construct a ribbon graph $G$ out of two connected ribbon graphs  $P$ and $Q$. Given $P$, $Q$,  vertices $v_1^P, \ldots, v_n^P\in V(P)$, and $v_1^Q, \ldots, v_n^Q\in V(Q)$. Then if, for each $i$, we identify $v_i^P$ and $v_i^Q$ in a way such that the edges incident to  $v_i^P$ and to $v_i^Q$ do not intersect, we obtain a ribbon graph $G$ that has the property that $G=P\oplus_n Q$, with the $n$-sum occurring at  $v_1, \ldots, v_n$. 
We  often find it convenient to regard an $n$-sum as such a commutative operation on ribbon graphs.

\medskip

Here we are especially interested in $1$-sums of ribbon graphs. We denote the $1$-sum operation, $\oplus_1$, simply by $\oplus$. We also note that the non-triviality requirement in Definition~\ref{d.ns} means that a $1$-summand never consists of an isolated vertex. This condition (and also the requirement that $P$ and $Q$ are connected) is for  convenience, and the results presented here can easily be adapted if it is dropped. Just as with abstract graphs, we say that a ribbon graph is {\em separable} if and only if it can be written as a $1$-sum of two ribbon graphs.

Another fundamental operation on ribbon graphs that is of interest here is the join. 
 The join, also known as  `one-point join',  `map amalgamation' and `connected sum' in the literature,  is a simple, special case of the $1$-sum.
 
 \begin{definition}
Suppose $G=P\oplus Q$ with the $1$-sum occurring at $v$. If  
there is an arc on the  boundary of $v$ with the property that all edges of $P$ incident to $v$ meet it on this arc, and no edges $Q$ do, then  $G$ is  the {\em join} of $P$ and $Q$, written $G=P\vee Q$. (See Figure~\ref{f.sum}.)
\end{definition}


\subsubsection{Sequences of $1$-sums}\label{ss.seq}
An important observation is that, when regarded as an operation, the $n$-sum is not associative. Suppose that  $G=(P\oplus_{n_1} Q)\oplus_{n_2} R$, then, if $n_2\geq 2$, it is possible that the $\oplus_{n_2}$-sum involves vertices of both $P$ and $Q$, and so we can not write $G$ as $P\oplus_{n_1} (Q\oplus_{n_2} R)$. A second possibility is that the $n_2$-sum involves only vertices of $P$ (so $R\cap Q=\emptyset$), in which case the connectivity requirement in Definition~\ref{d.ns} means that we can not write $G$ as  $P\oplus_{n_1} (Q\oplus_{n_2} R)$. This second case applies  for $n_2\geq 1$, and since we are primarily interested in $1$-sums, is of more concern here.

   Although in general we can not write  $(P\oplus_{n_1} Q)\oplus_{n_2} R$ as $P\oplus_{n_1} (Q\oplus_{n_2} R)$, in certain cases we can. For example, if $P\cap R=\emptyset$ then  $(P\oplus_{n_1} Q)\oplus_{n_2} R$ can be written as $P\oplus_{n_1} (Q\oplus_{n_2} R)$. 
Note also that in such a case we can also write    $(P\oplus_{n_1} Q)\oplus_{n_2} R$ as $P\oplus_{n_1} (R\oplus_{n_2} Q)$, but not as $(P\oplus_{n_1} R)\oplus_{n_2} Q$.    
   
 With these observations on associativity in mind, we adopt the convention that    
\[   H_1 \oplus_{n_2} H_2\oplus_{n_3} H_3   \oplus_{n_4} \cdots\oplus_{n_l} H_l := (\cdots ((H_1 \oplus_{n_2} H_2)\oplus_{n_3} H_3)   \oplus_{n_4} \cdots\oplus_{n_l} H_l).\]

     We are particularly interested in expressions of $G$ as $1$-sums of ribbon graphs, and accordingly make the following definition.
  \begin{definition}  
We  say that $G$ can be {\em written as a sequence of $1$-sums} if $G$ contains subgraphs $H_1, \ldots ,H_l$ such that
\begin{equation}\label{e.seq}
G=   H_1 \oplus H_2\oplus H_3   \oplus \cdots\oplus H_l := (\cdots ((H_1 \oplus H_2)\oplus H_3)   \oplus \cdots\oplus H_l). 
\end{equation}
\end{definition}

\begin{example}\label{e.s}
Consider the ribbon graph graph $G$ shown in Figure~\ref{f.s1}. If we define the ribbon subgraphs 
$H_1=(\{a,b,c\},\{1,2,3,4\})$, 
$H_2=(\{a,d\},\{5\})$, 
$H_3=(\{b,e,f\},\{6,7,8\})$,
$H_4=(\{f\},\{9\})$,
$H_5=(\{c,g,h\},\{10,11,12,13\})$,   
$H_6=(\{h\},\{14\})$, and    
$H_7=(\{g\},\{15\})$, then we can write $G=H_1\oplus H_2\oplus\cdots \oplus H_7$, with the $1$-sums occurring at   $a$, $b$, $f$, $c$, $h$, and $g$, respectively.

The ribbon graph $G$ can also be written as, for example, $G=H_4 \oplus H_3 \oplus H_1\oplus H_5\oplus H_7\oplus H_2\oplus H_6$.  Also observe that  $G$ can not be expressed as a sequence of $1$-sums that starts with, for example, $H_1\oplus H_4$.
\end{example}

Observe that in \eqref{e.seq}, the $H_i$'s are  non-trivial ribbon subgraphs that cover $G$; that for each $i\neq j$, $H_i$ and $H_j$ have  at most one vertex in common; and that if a $1$-sum occurs at a vertex $v$ in the sequence, then $v$ is a separating vertex of the underlying abstract graph of $G$.

As discussed above, some reorderings of the terms in a sequence of $1$-sums is possible. 
We consider sequences of $1$-sums   to be {\em equivalent} if they differ only in the order of $1$-summation, and consider all sequences of $1$-sums up to this equivalence.

\begin{figure}
\centering
\subfigure[A ribbon graph $G$.]{
\includegraphics[scale=.5]{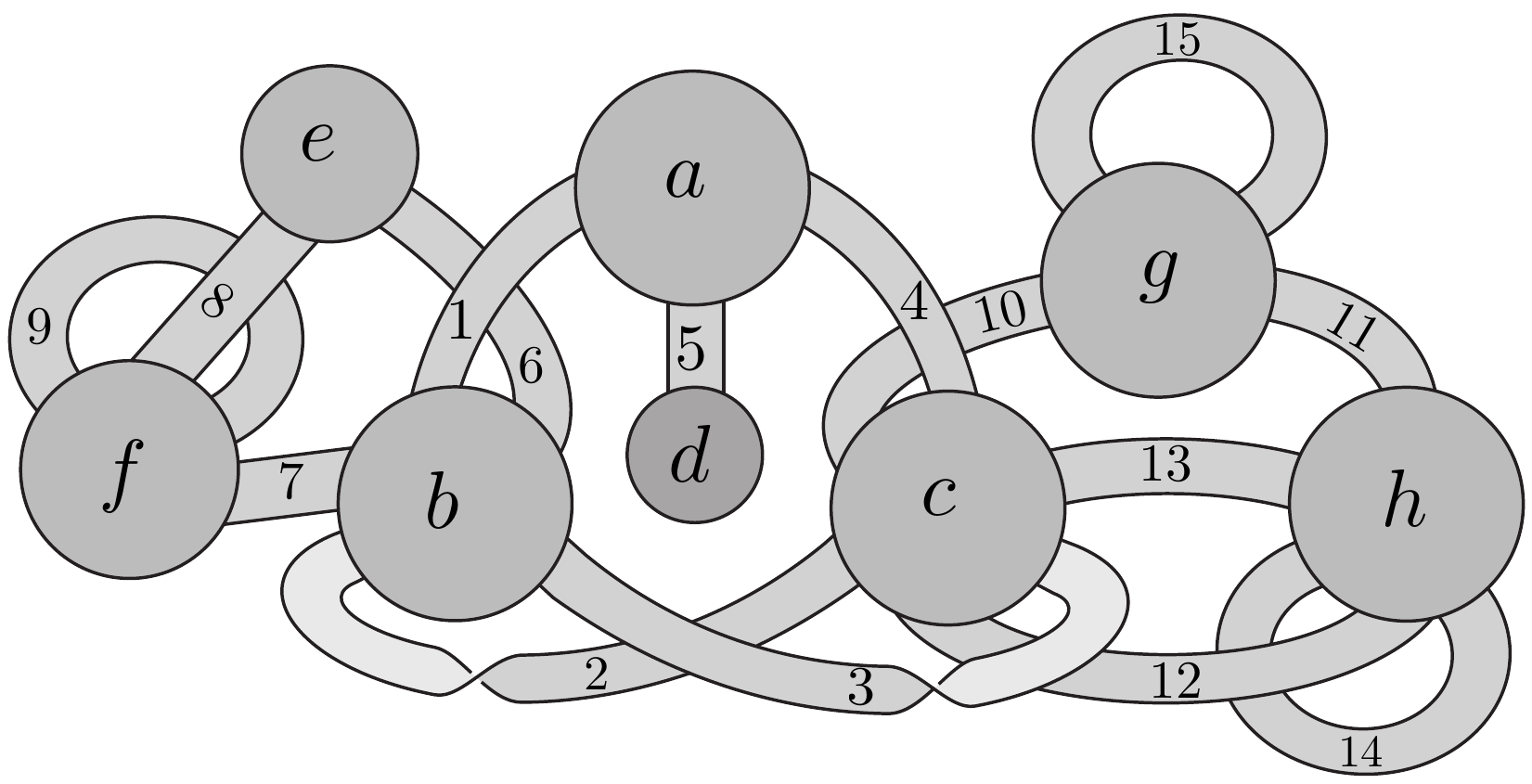}
\label{f.s1}
}
\hspace{11mm}
\subfigure[A graph associated with a sequence of $1$-sums.]{
\includegraphics[scale=0.7]{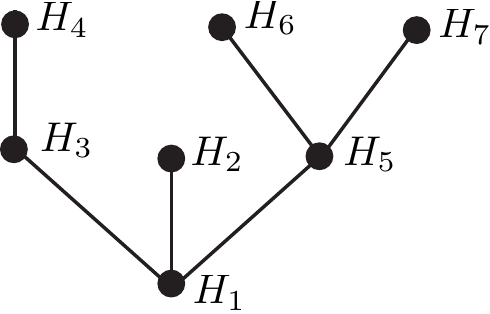}
\label{f.s2}
}
\label{f.s}
\caption{A ribbon graph that admits an \rpbit.}
\end{figure}

\bigskip

Sequences of $1$-sums have an associated graph  that can be used to reorder them. Suppose that 
$G=   H_1 \oplus H_2\oplus H_3   \oplus \cdots\oplus H_l$. Then we can associate a graph $\mathcal{T}$ with the sequence of $1$-sums by taking one vertex labelled $H_i$ for each ribbon subgraph $H_i$, and adding an edge between the vertices labelled $H_i$ and $H_j$ if and only if $H_i\cap H_j\neq \emptyset$. 

\begin{example} \label{e.tree}
The graph associated with the sequence of $1$-sums given in Example~\ref{e.s} is shown in Figure~\ref{f.s2}. 
\end{example}

We may use the graph $\mathcal{T}$ to reorder the sequence of $1$-sums as follows: choose a root of the graph and let $\mathcal{S}_1= H_i$, where $H_i$ is the label of the root. If $\mathcal{S}_j$ has been constructed, choose an $H_p$ that is not in  $\mathcal{S}_j$, but labels a vertex in $\mathcal{T}$ that is adjacent to one labelled by a summand in $\mathcal{S}_j$. Let $\mathcal{S}_{j+1}= \mathcal{S}_j \oplus H_p$. This results in a valid reordering $\mathcal{S}_l$ of the sequence of $1$-sums.
Since the choice of root is arbitrary, we have the following proposition:
\begin{proposition}\label{p.start} Let 
$G=   H_1 \oplus H_2\oplus H_3   \oplus \cdots\oplus H_l$. Then for each $i$, $G$ can be written as a sequence of $1$-sums in which $H_i$ is the first $1$-summand: $G=H_i\oplus H_{\iota_2} \oplus \cdots  \oplus H_{\iota_l}$.
\end{proposition}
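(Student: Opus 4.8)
The plan is to use the associated graph $\mathcal{T}$ together with the reordering procedure described just before the statement. The key point is that the reordering procedure is a breadth-first/depth-first style construction that works from an arbitrary choice of root; I would make this precise and verify that it always produces a valid sequence of $1$-sums in the sense of Definition~\ref{d1}, regardless of which vertex of $\mathcal{T}$ is chosen as root.

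First I would observe that $\mathcal{T}$ is connected, since $G = H_1 \oplus \cdots \oplus H_l$ is connected and the $H_i$ cover $G$; indeed a vertex in $G$ lying in two summands $H_i$ and $H_j$ forces an edge of $\mathcal{T}$ between the corresponding vertices, and connectivity of $G$ propagates to connectivity of $\mathcal{T}$. Next, fix $i$ and root $\mathcal{T}$ at the vertex labelled $H_i$. Carry out the reordering procedure: set $\mathcal{S}_1 = H_i$, and having built $\mathcal{S}_j = H_i \oplus H_{\iota_2} \oplus \cdots \oplus H_{\iota_j}$, pick any $H_p$ not yet used whose vertex in $\mathcal{T}$ is adjacent to the vertex of some already-chosen summand, and set $\mathcal{S}_{j+1} = \mathcal{S}_j \oplus H_p$. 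Connectedness of $\mathcal{T}$ guarantees that such a $H_p$ exists at every stage until all $l$ summands are exhausted, so the process terminates with a sequence using each $H_k$ exactly once and beginning with $H_i$.

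The substantive step — and the main obstacle — is checking that each $\mathcal{S}_{j+1} = \mathcal{S}_j \oplus H_p$ is actually a legitimate $1$-sum, i.e. that $\mathcal{S}_j$ and $H_p$ meet in exactly one vertex $v$, that $v$ is a separating vertex, and that $\mathcal{S}_j \cup H_p$ together with this $1$-sum structure is again of the required form. By construction $H_p$ is adjacent in $\mathcal{T}$ to some earlier summand, so $H_p$ meets $\mathcal{S}_j$ in at least one vertex. I would then invoke the structural observation recorded after Example~\ref{e.tree} (equivalently, the hypothesis that $G = H_1 \oplus \cdots \oplus H_l$ is a sequence of $1$-sums): for each pair $k \neq m$, $H_k \cap H_m$ is at most one vertex, and that vertex is a separating vertex of $G$. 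Since $\mathcal{S}_j$ is a union of some of the $H_k$'s, $H_p \cap \mathcal{S}_j = \bigcup_{k} (H_p \cap H_{\iota_k})$ where the union is over the indices used in $\mathcal{S}_j$; I must rule out that two distinct earlier summands meet $H_p$ in two \emph{different} vertices. This is exactly the acyclicity of $\mathcal{T}$ in disguise: if it happened, one would produce a cycle in the "separation structure" of $G$ contradicting that the original expression is a genuine sequence of $1$-sums. Concretely, I would argue that the graph $\mathcal{T}$ arising from a valid sequence of $1$-sums is a tree (this follows because each $H_{\iota_{j+1}}$ in the original ordering attaches to the union of the previous ones at a single new separating vertex, so adding its vertex to $\mathcal{T}$ adds exactly one edge), and in a tree a newly reached vertex is adjacent to exactly one already-reached vertex — hence $H_p$ meets $\mathcal{S}_j$ in exactly the one vertex $v$ coming from that unique tree-edge. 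That $v$ is separating, and that $\mathcal{S}_j$ and $H_p$ are non-trivial and connected, is then immediate, so $\mathcal{S}_{j+1}$ is a valid $1$-sum and inductively $\mathcal{S}_l = G$ expressed as a sequence of $1$-sums with $H_i$ first.

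Finally I would remark that, under the equivalence of sequences of $1$-sums up to reordering introduced above, this shows the desired conclusion, and that the same tree argument recovers the observations of Example~\ref{e.s} (e.g. why $G$ cannot be reordered to start with $H_1 \oplus H_4$: the vertices $H_1$ and $H_4$ are non-adjacent in $\mathcal{T}$). The only delicate point to write carefully is the claim that $\mathcal{T}$ is a tree, so I would state and prove that as a short lemma before the main argument.
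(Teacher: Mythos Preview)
Your approach mirrors the paper's --- root $\mathcal{T}$ at $H_i$ and rebuild by adjacency --- and the verification you attempt is exactly what the paper leaves implicit. However, the lemma you isolate as ``the only delicate point'', that $\mathcal{T}$ is a tree, is false in general. The flaw is in your parenthetical justification: when $H_{j+1}$ is attached in the original ordering it meets $H_1\cup\cdots\cup H_j$ at a single vertex $v_{j+1}$, but $v_{j+1}$ need not be \emph{new}; it may coincide with some earlier $v_k$, in which case $v_{j+1}$ already lies in several previous summands and adding the vertex $H_{j+1}$ to $\mathcal{T}$ creates several edges at once. Concretely, take three loops $H_1,H_2,H_3$ all based at a common vertex $v$: then $G=H_1\oplus H_2\oplus H_3$ is a legitimate sequence of $1$-sums, yet $\mathcal{T}$ is a triangle. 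Your inference ``in a tree a newly reached vertex is adjacent to exactly one already-reached vertex, hence $H_p$ meets $\mathcal{S}_j$ in exactly one vertex'' therefore collapses.

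This does not break the proposition, only your route to it. What you actually need is that $H_p$ meets $\mathcal{S}_j$ in a single vertex \emph{of $G$}, not that $H_p$ is adjacent to a single summand in $\mathcal{T}$; these differ precisely when several earlier summands meet $H_p$ at the \emph{same} vertex of $G$. One clean repair: replace $\mathcal{T}$ by the bipartite incidence graph $\mathcal{T}'$ with parts $\{H_1,\ldots,H_l\}$ and $V(G)$, an edge from $H_j$ to $v$ whenever $v\in V(H_j)$. The count $\sum_j |V(H_j)| = |V(G)| + (l-1)$, which follows immediately from the original $1$-sum sequence, shows $\mathcal{T}'$ is a genuine tree, and now your acyclicity argument runs verbatim: two distinct vertices of $G$ in $H_p\cap\mathcal{S}_j$ would give a cycle in $\mathcal{T}'$. (The paper itself merely asserts the reordering works; it later remarks that for \emph{biseparations} --- where the $1$-sums do occur at distinct vertices --- the associated graph is a tree, and under that extra hypothesis your lemma would indeed hold.)
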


\section{Separability and the genus of a partial dual}\label{s3}
In this section we prove the first of our main results which is a relation between the separability of a ribbon graph and the genus of a partial dual. We introduce the concept of a \bi of a ribbon graph which, loosely speaking, says that the ribbon graph can be constructed by $1$-summing the elements from two sets of ribbon graphs together  in such a way that no $1$-sum involves two components from the same set. We will see that the genus of a partial dual is determined by the genera of the summands in a \bit. We will use this result later  to completely characterize the partial duals of low genus ribbon graphs.

\subsection{Biseparations}\label{ss.bi}
Let $G$ be a ribbon graph and $A$ be a non-empty, proper subset of $E(G)$. The set $A$ and its complementary subset $A^c$ partition  $E(G)$, and  induce (not necessarily connected) ribbon subgraphs $G|_A$ and $G|_{A^c}$  of $G$. Every component of $G|_A$ and of $G|_{A^c}$ 
can be regarded as a subgraph embedded in $G$, and we can therefore write 
\begin{equation}\label{e.ssbi1}
G= H_1 \oplus_{n_2} H_2 \oplus_{n_3} \cdots \oplus_{n_l} H_l, 
\end{equation}
where each $H_i$ is a unique component of  $G|_A$ or of $G|_{A^c}$, and every component of $G|_A$ and $G|_{A^c}$ appears as an $H_i$. (To obtain \eqref{e.ssbi1}, choose a component of $G|_A$ or of $G|_{A^c}$ and keep summing components of $G|_A$ and $G|_{A^c}$, so that the resulting ribbon graph is connected, until each component is used.) Furthermore, observe that by the  construction of the $H_i$, every $n_i$-sum in Equation~\eqref{e.ssbi1} involves one component of $G|_A$ and one of $G|_{A^c}$.  If each $n_i$-sum in \eqref{e.ssbi1} is a $1$-sum then we say that $A$ defines a \bit. Formally: 
\begin{definition}\label{d1}
Let $G=(V,E)$ be a connected ribbon graph and $A\subseteq E$.    We say that $A$ defines a {\em \bi} if either
\begin{enumerate}
\item  $A=E$ or $A=\emptyset$ (in which case the \bi is {\em trivial}); or,
\item $G$ can be written as a sequence of $1$-sums in which  each $1$-sum  involves a component of $G|_A$ and a  component of $G|_{A^c}$. 
\end{enumerate} 
\end{definition}
The {\em length} of a non-trivial \bi is the length of its sequence of $1$-sums, and the length of a trivial \bi is  $1$.

\begin{example}\label{e.bis} Some examples of \bis are given below.
\begin{enumerate}
\item\label{e.bis1} The sets $A=\{1\}$ and  $A=\{2,3\}$ both define non-trivial \bis of the ribbon graph  shown in Figure~\ref{f.chpd1}.
\item \label{e.bis2} Every subset of $E(G)=\{1,2,3\}$ defines a \bi of the ribbon graph  shown in Figure~\ref{f.chpd4}.
\item\label{e.bis3} Only $\emptyset$ and $E(G)$ define \bis of the ribbon graph in Figure~\ref{f.ampd1}. 
\item\label{e.bis5} For the ribbon graph in Figure~\ref{f.ampd4}, $A$ defines a \bi if and only if it contains either both $1$ and $5$, or neither $1$ nor $5$. 
\item\label{e.bis4} For the ribbon graph $G$ in Figure~\ref{f.s1}, let 
$A_1=\{1,2,3,4\}$, 
$A_2=\{5\}$, 
$A_3=\{6,7,8\}$,
$A_4=\{9\}$,
$A_5=\{10,11,12,13\}$,   
$A_6=\{14\}$, and    
$A_7=\{15\}$.
Then $A$ defines a \bi of $G$ if and only if $A=\bigcup_{i\in I} A_i$, for some $I\subseteq \{1,\ldots, 7\}$.
\end{enumerate}
\end{example}

Observe that in Definition~\ref{d1} (and in the preceding discussion) there is no distinction between  $A$ and $A^c$. This means that $A$ defines a \bi if and only if $A^c$ does. For reference later we record this observation as a proposition:
\begin{proposition}\label{p2}
Let $G$ be a connected ribbon graph. Then $A$ defines a \bi of $G$ if and only if $A^c$ does.
Moreover, if the \bi is non-trivial, then $A$ and $A^c$ define \bi with the same set of sequences of $1$-sums.
\end{proposition}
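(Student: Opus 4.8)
The plan is to observe that Proposition~\ref{p2} is immediate from the fact that Definition~\ref{d1} treats $A$ and its complement $A^c$ symmetrically; there is essentially nothing to prove beyond checking that no clause of the definition distinguishes the two.

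First I would dispose of the trivial case. Since $A=E$ if and only if $A^c=\emptyset$, and $A=\emptyset$ if and only if $A^c=E$, clause (1) of Definition~\ref{d1} holds for $A$ exactly when it holds for $A^c$; in this situation both biseparations are trivial, hence both of length $1$ by definition.

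Next, for the non-trivial case, I would point out that clause (2) of Definition~\ref{d1} refers to $A$ only through the unordered pair of induced ribbon subgraphs $\{G|_A,\,G|_{A^c}\}$: it asks that $G$ admit a sequence of $1$-sums $G=H_1\oplus H_2\oplus\cdots\oplus H_l$ in which each $1$-sum involves one component of $G|_A$ and one component of $G|_{A^c}$. Because $G|_{(A^c)^c}=G|_A$, the condition ``each $1$-sum involves a component of $G|_A$ and a component of $G|_{A^c}$'' is literally the same as the condition obtained by substituting $A^c$ for $A$ throughout. Consequently a given sequence of $1$-sums of $G$ witnesses that $A$ defines a biseparation exactly when it witnesses that $A^c$ does; in particular $A$ defines a non-trivial biseparation if and only if $A^c$ does, and the set of sequences of $1$-sums associated to $A$ coincides with the set associated to $A^c$. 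Combining this with the trivial case gives the proposition.

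The only points requiring a word of care --- and here I would simply appeal to the set-up in the paragraph preceding Definition~\ref{d1} rather than prove anything new --- are that each component of $G|_A$ and of $G|_{A^c}$ is to be regarded as a fixed ribbon subgraph of $G$ (so that the description ``the $H_i$ are the components of $G|_A$ and of $G|_{A^c}$'' is genuinely unordered and symmetric), and that the connectivity and non-triviality conditions built into the $1$-sum operation $\oplus=\oplus_1$ are themselves symmetric in the two summands. Granting this, the statement is a purely formal symmetry assertion and I do not expect any real obstacle; the ``hard part'', such as it is, is only to phrase the observation precisely.
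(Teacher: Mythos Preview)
Your proposal is correct and mirrors exactly what the paper does: it records this proposition as an immediate observation, noting just before the statement that ``in Definition~\ref{d1} (and in the preceding discussion) there is no distinction between $A$ and $A^c$''. You have simply spelled out the symmetry check in slightly more detail than the paper, which gives no separate proof.
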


It is also worthwhile observing that if $A$ defines a non-trivial \bit, then every $1$-sum in the sequence of $1$-sums it defines occurs at a different vertex of $G$. We also note that, as before, we identify the edges and vertices of the subgraphs $G|_A$ and $G|_{A^c}$ with those of $G$ in the natural way.

Note that the graph associated with a biseparation (as described at the end of Section~\ref{ss.seq}) is a tree.

\subsection{Biseparations and the genus of a partial dual}\label{ss.bgpd}
We  come to the first of our main results. This result provides a connection between the genus of a partial dual and separability.

\begin{theorem}\label{t1}
Let $G$ be a connected ribbon graph and $A\subseteq E(G)$. Then $A$ defines a biseparation of $G$ if and only if 
\[ \ga(G^A) = \ga(G|_A)+ \ga(G-A). \]
Furthermore, if $A$ defines a \bit, then $G^A$ is orientable if and only if both $G|_A$ and  $G-A$ are.
\end{theorem}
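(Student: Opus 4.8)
The plan is to reduce both assertions to one exact formula for $\ga(G^A)$ together with one purely graph-theoretic inequality. Throughout, let $b(A)$ and $b(A^c)$ denote the numbers of boundary components of the spanning ribbon subgraphs $(V(G),A)$ and $(V(G),A^c)$; we may assume $\emptyset\neq A\neq E(G)$, as the remaining cases are immediate from $G^\emptyset=G$ and $G^{E(G)}=G^*$. First I would determine $v(G^A)$, $e(G^A)$ and $f(G^A)$. By Proposition~\ref{p3}, $G^A=(G\,\vec-\,A^c)^*\,\vec+\,A^c$; the vertices of the geometric dual $(G\,\vec-\,A^c)^*$ are the boundary components of $G\,\vec-\,A^c$, which as a surface is just the spanning ribbon subgraph $(V(G),A)$, and adjoining the edges of $A^c$ via $\vec+\,A^c$ creates no new vertices, so $v(G^A)=b(A)$. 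Since $(G^A)^*=G^{A\,\Delta\,E(G)}=G^{A^c}$ by Proposition~\ref{p.pd2}\eqref{p.pd2.2}--\eqref{p.pd2.3}, we get $f(G^A)=v\bigl((G^A)^*\bigr)=v(G^{A^c})=b(A^c)$; also $e(G^A)=e(G)$ and $G^A$ is connected (Proposition~\ref{p.pd2}). Using that $\ga(H)=2c(H)-v(H)+e(H)-f(H)$ for any ribbon graph $H$ with $c(H)$ components (cap off the boundary discs and apply $\chi=2-\ga$ to each closed component), this gives $\ga(G^A)=2-b(A)+e(G)-b(A^c)$.

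Next I would compute $\ga(G-A)=\ga(G|_{A^c})$ and $\ga(G|_A)$ the same way. Let $r$ and $s$ be the numbers of components of the induced subgraphs $G|_A$ and $G|_{A^c}$, and let $m$ be the number of vertices of $G$ incident with both an edge of $A$ and an edge of $A^c$. Then $v(G|_A)+v(G|_{A^c})=v(G)+m$, while $f(G|_A)=b(A)-\bigl(v(G)-v(G|_A)\bigr)$ because each vertex not met by $A$ contributes an extra (isolated-vertex) boundary component to $(V(G),A)$, and similarly for $A^c$. Substituting into $\ga(G|_A)=2r-v(G|_A)+|A|-f(G|_A)$ and its analogue for $A^c$, then adding, yields $\ga(G|_A)+\ga(G|_{A^c})=2(r+s)+e(G)-b(A)-b(A^c)-2m$. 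Subtracting this from the formula for $\ga(G^A)$ gives the key identity
\[\ga(G^A)-\bigl(\ga(G|_A)+\ga(G-A)\bigr)=2\,(m-r-s+1),\]
so the equality in the theorem holds if and only if $m=r+s-1$.

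It therefore remains to show that $m\geq r+s-1$ always, with equality exactly when $A$ defines a \bit. To see this, form the intersection multigraph $\Gamma$ whose $r+s$ vertices are the components of $G|_A$ and of $G|_{A^c}$, and which has, for each of the $m$ vertices $v$ of $G$ meeting both classes, one edge joining the component of $G|_A$ through $v$ to the component of $G|_{A^c}$ through $v$. As $G$ is connected, $\Gamma$ is connected, so $m=|E(\Gamma)|\geq|V(\Gamma)|-1=r+s-1$, with equality precisely when $\Gamma$ is a tree. If $\Gamma$ is a tree, then adjoining the components of $G|_A$ and $G|_{A^c}$ in any order extending a tree-order on $\Gamma$, each newly adjoined component meets the union of the earlier ones in exactly one vertex (there are no multiple edges and no new cycle in a tree), and that $1$-sum joins a component of $G|_A$ to one of $G|_{A^c}$; hence $A$ defines a \bit. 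Conversely, if $A$ defines a \bit\ then its defining sequence of $1$-sums shows that any two of these components meet in at most one vertex and that the graph associated with the sequence of $1$-sums --- which is exactly $\Gamma$ --- is a tree, as recorded after Definition~\ref{d1}. This establishes the main equivalence.

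For the orientability statement, $G^A$ is orientable if and only if $G$ is, by Proposition~\ref{p.pd2}; and a ribbon subgraph of an orientable ribbon graph is orientable, so if $G$ is orientable then so are $G|_A$ and $G-A$. Conversely, suppose $A$ defines a \bi and write $G=H_1\oplus\cdots\oplus H_l$ for its defining sequence of $1$-sums. Since consecutive summands meet in a single cut vertex, every cycle of $G$ lies entirely inside one $H_i$; as a ribbon graph is orientable if and only if each of its cycle-subgraphs is an annulus rather than a M\"obius band, orientability of every $H_i$ --- equivalently of $G|_A$ and of $G-A$ --- forces orientability of $G$, hence of $G^A$. The step I expect to be the crux is the equality analysis in the third paragraph: matching the arithmetic condition $m=r+s-1$ with the structural definition of a \bit\ via the intersection graph $\Gamma$; everything else is routine Euler-characteristic bookkeeping together with the quoted properties of partial duality.
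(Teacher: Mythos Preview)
Your proof is correct and takes a genuinely different route from the paper's.

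The paper proves the ``if'' direction by induction on the length of the biseparation, relying on the topological Lemma~\ref{l1}, which constructs an explicit cellular embedding of $(P\oplus_n Q)^{E(P)}$ in a surface obtained from $\Sigma_P\#\Sigma_Q$ by adding $n-1$ handles, and reads off that $\ga$ is additive when $n=1$ and strictly super-additive when $n\ge 2$. The ``only if'' direction then peels off the last summand and repeatedly applies this inequality. Your approach bypasses all of this: you compute $\ga(G^A)$ and $\ga(G|_A)+\ga(G-A)$ directly from Euler's formula via the known identifications $v(G^A)=b(A)$ and $f(G^A)=b(A^c)$, obtaining the closed-form identity
\[
\ga(G^A)-\bigl(\ga(G|_A)+\ga(G-A)\bigr)=2\,(m-r-s+1),
\]
and then reduce the theorem to the purely combinatorial fact that the bipartite intersection multigraph $\Gamma$ is connected on $r+s$ vertices with $m$ edges, so $m\ge r+s-1$ with equality iff $\Gamma$ is a tree, which you correctly match with the definition of a \bit.

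What your approach buys is a shorter, induction-free argument that needs no surface topology beyond Euler's formula, and it yields the exact defect $2(m-r-s+1)$, which is sharper than the paper's strict inequality for $n\ge2$. What the paper's approach buys is Lemma~\ref{l1} itself, which gives a concrete geometric picture of the embedding of the partial dual and is reused in spirit elsewhere. One small point worth making explicit in your write-up: your multigraph $\Gamma$ (one edge per shared vertex) and the paper's associated graph $\mathcal{T}$ (one edge per nonempty intersection) coincide only once you know that in a \bi\ any two components meet in at most one vertex; you assert this, and it does follow from the sequence-of-$1$-sums definition, but a one-line justification would tighten the link to the remark after Definition~\ref{d1}. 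The orientability paragraph is fine; the cycle argument through cut vertices is exactly the right way to pass orientability back from the summands to $G$.
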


Note that $\ga(G|_{A^c})=\ga(G-A)$, and  that $G|_{A^c}$ is orientable if and only if   $G-A$ is. Thus   Theorem~\ref{t1} can be expressed in terms of $G|_A$ and $G|_{A^c}$. Here, however, we prefer  to work in terms of $A$ rather than $A^c$.

The remainder of this section is taken up with the proof of  Theorem~\ref{t1}.  We begin with a   proposition and lemma that concern the ways in which partial duality interacts with $n$-sums.

\begin{proposition}\label{p1}
Let $G$ be a ribbon graph.
\begin{enumerate}
\item If $v\in V(G)$ is an isolated vertex in $G-A^c$, then $v$ is also a vertex of $G^A$.

\item Suppose $G=P\oplus_n Q$ with the $n$-sum occurring at $v_1, \ldots, v_n$. Then every vertex of $V(P)\bs \{v_1, \ldots, v_n\}$ is also a vertex of  $G^{E(Q)}= (P\oplus_n Q)^{E(Q)}$.

\item If $G=P\oplus_{n_1} Q\oplus_{n_2}R$, with $P\cap R=\emptyset$ and $A\subseteq E(P)$, then $G^A =   (P\oplus_{n_1} Q)^A  \oplus_{n_2}R $.

\end{enumerate}

\end{proposition}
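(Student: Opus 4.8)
The plan is to prove the three parts essentially independently, each by reducing to the arrow-presentation / geometric-dual descriptions of partial duality given in Section~\ref{ss.pd}.

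For part~(1): I would use the characterization $G^A = \left(G\,\vec{-}\,A^c\right)^* \vec{+}\, A^c$ from Proposition~\ref{p3}. If $v$ is an isolated vertex of $G-A^c$, then in the arrow-marked ribbon graph $G\,\vec{-}\,A^c$ the disc $v$ carries no edges (all edges at $v$ lie in $A^c$ and so have been deleted), though it may carry marking arrows. Forming the geometric dual $\left(G\,\vec{-}\,A^c\right)^*$ caps off punctures with a new set of vertex discs and deletes the old vertices; I need to check that an isolated vertex disc of $G\,\vec{-}\,A^c$ survives this operation unchanged, i.e. that capping off and deleting vertices leaves an isolated disc intact and the marking arrows on it in place. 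Since an isolated vertex disc is its own boundary component in the punctured-surface picture and is disjoint from the rest of $G\,\vec{-}\,A^c$, it is unaffected by the dual construction, hence is still a vertex (with its marking arrows) of $\left(G\,\vec{-}\,A^c\right)^*$; adding back the edges of $A^c$ via $\vec{+}\,A^c$ then reattaches edges to $v$ but does not destroy it. So $v\in V(G^A)$.

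For part~(2): I would apply part~(1) with $A = E(Q)$, so $A^c = E(P)$. Let $v\in V(P)\setminus\{v_1,\dots,v_n\}$. Since $G = P\oplus_n Q$ and $v$ is not one of the shared vertices, every edge of $G$ incident to $v$ is an edge of $P$, hence lies in $A^c = E(P)$; therefore $v$ is an isolated vertex of $G - A^c = G - E(P)$. By part~(1), $v$ is a vertex of $G^{E(Q)}$.

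For part~(3): I would argue by manipulating the $n$-sum decomposition together with the "formed one edge at a time / acts disjointly" properties of partial duality (Proposition~\ref{p.pd2}). The point is that $A\subseteq E(P)$ is "localized in $P$", and $R$ is glued on only at vertices of $Q$ (since $P\cap R=\emptyset$, the $\oplus_{n_2}$-sum occurs at vertices lying in $Q$ but not in $P$), so duality on the edges of $A$ happens entirely within the $P\oplus_{n_1}Q$ part and cannot see $R$. Concretely: writing $S := P\oplus_{n_1}Q$, we have $G = S\oplus_{n_2}R$ with the $\oplus_{n_2}$-sum occurring at vertices in $V(S)$, and $A\subseteq E(S)$. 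Using $G^A = \left(G\,\vec{-}\,A^c\right)^*\vec{+}\,A^c$, I would track what the deletion, dualization, and re-addition do to the piece $R$: since none of the edges of $A$ meet $R$, and the $\oplus_{n_2}$-gluing vertices are not incident to edges of $A$ (they lie outside $P$, whereas $A\subseteq E(P)$, and more to the point $v_1,\dots,v_{n_2}\notin V(P)$ so edges of $A$ are not incident to them), the boundary components of the spanning subgraph $(V(G),A)$ that are involved in the dualization are exactly those of $(V(S),A)$, with $R$ hanging off at gluing vertices that are untouched. Hence $G^A$ is obtained from $S^A$ by gluing $R$ back at the same $n_2$ vertices in the same way, i.e. $G^A = S^A\oplus_{n_2}R = (P\oplus_{n_1}Q)^A\oplus_{n_2}R$. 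I should be a little careful that the gluing vertices $v_1,\dots,v_{n_2}$ of $S$ are still present in $S^A$ and play the same role — this follows because they carry no edges of $A$ (all their incident edges in $S$ are in $A^c$, as $A\subseteq E(P)$ and these vertices are not in $P$), so locally nothing changes at them under partial duality by $A$.

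The main obstacle is part~(3): the delicate point is verifying at the topological level that the $\oplus_{n_2}$-gluing data between $S$ and $R$ is genuinely preserved by the partial-dual construction on $A$ — that no interaction between $R$ and the dualized edges is introduced through a shared boundary component of the spanning subgraph. Parts~(1) and~(2) are essentially bookkeeping once part~(1) is set up carefully.
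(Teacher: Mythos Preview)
Your proposal is correct and follows essentially the same approach as the paper: part~(1) via Proposition~\ref{p3} and the fact that geometric duality sends an isolated vertex to an isolated vertex, part~(2) by reducing to part~(1), and part~(3) by observing that since $A\subseteq E(P)$ and $P\cap R=\emptyset$, the partial-dual construction leaves the vertices of $R$ (in particular the $n_2$ gluing vertices) untouched so that $R$ reappears as a ribbon subgraph of $G^A$ meeting the rest in exactly those $n_2$ vertices. The paper phrases part~(3) slightly more crisply in terms of where the marking arrows of $E(R)$ versus $E(P)$ sit in $G\,\vec{-}\,A^c$, but the content is the same.
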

\begin{proof}
For the first item, if $v$ is an isolated vertex of $G-A^c$, then it is also one of $G\vec{-}A^c$.
As geometric duality acts disjointly on components, and the geometric dual of an isolated  vertex is an isolated vertex, $v$ is also an isolated vertex in $(G\vec{-}A^c)^*$. It follows that $v$ is a vertex of $(G\vec{-}A^c)^*\vec{+} A^c$ which by Proposition~\ref{p3} is $G^A$.

The second item follows from the first as the elements of  $V(P)\bs \{v_1, \ldots, v_n\}$ are all isolated vertices of $G-E(P)$.

For the third item, begin by observing that every vertex in the ribbon subgraph $R$ of $G$ is an isolated vertex  in $G\vec{-}A^c$, and that no marking arrows in $G\vec{-}A^c$ labelled by edges in $E(R)$ lie on the same vertex as marking arrows labelled by edges in $E(P)$ (as $P\cap R=\emptyset$). Thus  $R$ and $(P\oplus_{n_1} Q)^A$ are both ribbon subgraphs of $G^A=  (P\oplus_{n_1} Q\oplus_{n_2}R)^A$, and these ribbon subgraphs intersect in exactly $n_2$ vertices. It follows that  $G^A =   (P\oplus_{n_1} Q)^A  \oplus_{n_2}R $.
\end{proof}

\begin{lemma}\label{l1}
Let $P$ and $Q$ be ribbon graphs. Then
\begin{enumerate}
\item \label{l1.1}  $\chi((P\oplus_n Q)^{E(Q)})=\chi((P\oplus_n Q)^{E(P)})=  \chi(P)+\chi(Q)-2n$;
\item \label{l1.2}  $\ga((P\oplus Q)^{E(Q)})=\ga((P\oplus Q)^{E(P)})= \ga(P)+\ga(Q)$;
\item \label{l1.3}   $\ga((P\oplus_n Q)^{E(Q)})=\ga((P\oplus_n Q)^{E(P)})> \ga(P)+\ga(Q)$, when $n\geq 2$.
\end{enumerate}
\end{lemma}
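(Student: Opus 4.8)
The plan is to compute everything from Euler characteristic and then translate into Euler genus. First I would establish item~\eqref{l1.1}. The key fact is that partial duality never changes the number of edges, and a clean way to track vertices and boundary components is via the arrow-presentation/spanning-subgraph description of $G^A$. Recall (Proposition~\ref{p3}) that $(P\oplus_n Q)^{E(Q)} = \bigl((P\oplus_n Q)\,\vec{-}\,E(P)\bigr)^* \vec{+}\, E(P)$. The ribbon graph $(P\oplus_n Q)\,\vec{-}\,E(P)$ is just $Q$ together with $|V(P)|-n$ extra isolated vertices and with the $E(P)$-marking arrows sitting on the vertices $v_1,\dots,v_n$; taking its geometric dual replaces each of its boundary components by a vertex while keeping the $|V(P)|-n$ isolated vertices isolated. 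Since $\chi$ of a ribbon graph is $|V|-|E|+(\text{number of boundary components})$ and $|V(G^A)|+(\text{bdy cpts of }G^A)$ is symmetric under $G^A\leftrightarrow (G^A)^*$ in a way that is compatible with the dual construction, I would instead argue via the standard identity $\chi(G^A)+\chi(G^{A^c}) = \chi(G)+\chi(G^*) = 2\chi(G)$ applied componentwise — wait, more directly: partial duals satisfy $\chi(H^B) = \chi(H\,\vec{-}\,B^c)^*$-bookkeeping. The cleanest route: for \emph{any} ribbon graph $H$ and any $B$, one has $\chi(H) + \chi(H^B) = \chi(H|_B \text{ as spanning subgraph}) + \chi(\text{its complement})$ — rather than chase this, I will simply use the known fact (e.g. from \cite{Mo4} or derivable from Proposition~\ref{p3}) that taking the geometric dual of the arrow-marked ribbon graph $(P\oplus_n Q)\,\vec{-}\,E(P)$ turns its $b$ boundary components into $b$ vertices, so
\[
\chi\bigl((P\oplus_n Q)^{E(P)}\bigr) = \chi\bigl((P\oplus_n Q)\,\vec{-}\,E(P)\bigr) = \chi(Q) + \bigl(|V(P)|-n\bigr) - 0,
\]
and then observe $\chi(Q)+|V(P)|-n$; but $\chi(P) = |V(P)| - |E(P)| + (\text{bdy cpts of }P)$, so I need to also account for edges of $P$ being re-added. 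After re-adding $E(P)$ via $\vec{+}\,E(P)$, each added edge contributes $-1$ to $\chi$, giving $\chi(Q) + |V(P)| - n - |E(P)|$. Comparing with the target $\chi(P)+\chi(Q)-2n$: this forces $|V(P)| - |E(P)| - n = \chi(P) - 2n$, i.e. $|V(P)|-|E(P)|+n = \chi(P)$, which says $P$ has exactly $n$ boundary components — false in general. So the isolated-vertex count is not quite right; the correct statement is that $(P\oplus_n Q)\,\vec{-}\,E(P)$ is $Q$ plus isolated vertices, dualize to get $Q^{\emptyset}=Q$-with-boundary-turned-to-vertices, and the honest computation is $\chi = \chi(\text{result})$. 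The safe approach is therefore: \textbf{compute $\chi((P\oplus_n Q)^{E(Q)})$ directly by choosing a cellular embedding}, using that a partial dual is a cellular embedding of the same underlying surface modulo a controlled change, together with $\chi(G^A)=\chi(G) - 2\bigl(\text{something measuring how }A\text{ interacts with the face structure}\bigr)$. Concretely I would use the formula $\chi(G^A) = \chi(G^{A^c})$ is \emph{not} generally true; instead use $f(G^A) = $ (number of boundary components of the spanning ribbon subgraph $(V,A)$), $v(G^A) = f((V,A^c))$-type count. Cleanest: $\chi(G^A) = v((V,A^c)) - e(G) + v((V,A))$ where $v(\cdot)$ here means boundary-component count of that spanning ribbon subgraph. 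Apply with $G=P\oplus_n Q$ and $A=E(Q)$: then $(V,A)$ is $Q$ plus $|V(P)|-n$ isolated vertices, with $b(Q)+|V(P)|-n$ boundary components; and $(V,A^c)=(V,E(P))$ is $P$ plus $|V(Q)|-n$ isolated vertices, with $b(P)+|V(Q)|-n$ boundary components. Hence
\[
\chi\bigl((P\oplus_n Q)^{E(Q)}\bigr) = \bigl(b(P)+|V(Q)|-n\bigr) - \bigl(e(P)+e(Q)\bigr) + \bigl(b(Q)+|V(P)|-n\bigr) = \chi(P)+\chi(Q)-2n,
\]
since $\chi(P)=|V(P)|-e(P)+b(P)$ and likewise for $Q$. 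The symmetry with $E(P)$ is automatic by swapping roles of $P,Q$. This settles \eqref{l1.1}.

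Next, for \eqref{l1.2} and \eqref{l1.3}, I would pass from Euler characteristic to Euler genus using $\gamma(H) = 2c(H) - \chi(H)$ where $c(H)$ is the number of connected components (this is the standard relation for ribbon graphs with one boundary-component-capped disc per boundary, $\chi(H)+b(H)$-free version: for a connected ribbon graph viewed as a punctured surface, $\gamma = 2 - \chi - b$... I will use the precise relation the paper is implicitly using, namely $\gamma(H) = 2 - \chi(H)$ for connected $H$ with \emph{one} vertex after collapsing — no). The correct invariant relation: if $H$ is a connected ribbon graph then its cellular embedding surface $\widehat H$ has $\chi(\widehat H) = \chi(H) + b(H)$ and $\gamma(H) := \gamma(\widehat H) = 2 - \chi(\widehat H) = 2 - \chi(H) - b(H)$. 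Since a partial dual has a cellular embedding in the \emph{same} surface as $G$ up to the bookkeeping above, the genus of $G^A$ equals $2 - \chi(G^A) - b(G^A)$, and I can compute $b(G^A) = f((V,A))$, $v(G^A)=f((V,A^c))$ explicitly in the $n$-sum setting — but actually the slicker route for \eqref{l1.2}: $(P\oplus Q)^{E(Q)}$ is, by Proposition~\ref{p1}\,(3) and the join/1-sum structure, built by $1$-summing $P$ with $Q^{E(Q)}$ (or $Q^*$) at the single sum-vertex, and Euler genus is additive under $1$-sums of ribbon graphs (this additivity is exactly the reason the paper introduced Euler genus, and can be cited from the discussion just before the lemma, or proved in one line from \eqref{l1.1} with $n=1$). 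Thus $\gamma((P\oplus Q)^{E(Q)}) = \gamma(P) + \gamma(Q^{*}) = \gamma(P)+\gamma(Q)$, using $\gamma(Q^*)=\gamma(Q)$ for geometric duals. For \eqref{l1.3}, the $n=1$ additivity fails; I would feed the value of $\chi$ from \eqref{l1.1} into $\gamma(H) = 2c(H) - \chi(H) + (\text{correction for boundary components})$. Since $(P\oplus_n Q)^{E(Q)}$ is connected (it's a partial dual of a connected ribbon graph, and partial duality preserves connectivity), write $\gamma\bigl((P\oplus_n Q)^{E(Q)}\bigr) = 2 - \chi - b$ where $\chi = \chi(P)+\chi(Q)-2n$ and $b = b(G^A) = f((V,E(Q)))$. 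Meanwhile $\gamma(P)+\gamma(Q) = (2-\chi(P)-b(P)) + (2-\chi(Q)-b(Q))$. Subtracting, the desired strict inequality becomes a statement comparing $b(G^A) = b(Q) + (|V(P)|-n)$ against the quantity coming from $\gamma(P)+\gamma(Q)$; after simplification it reduces to showing $n - 1 > $ something nonnegative, i.e. the $n-1$ extra gluings create strictly positive extra Euler genus. I expect this final inequality to be \textbf{the main obstacle}: one must show that $1$-summing at $n\geq 2$ vertices strictly increases $\gamma$ relative to the disjoint union, equivalently that identifying the $n$ vertices $v_1,\dots,v_n$ of $P\oplus_n Q$ merges boundary components in a way that drops the total boundary-component count by strictly more than $n-1$ compared to the naive bound — a connectivity argument on the boundary-cycle structure of a ribbon graph. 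I would handle it by induction on $n$, peeling off one summation vertex at a time: realize $P\oplus_n Q = (P \oplus_{n-1} Q') \oplus_1 (\cdot)$-type decompositions are \emph{not} available, so instead argue directly that merging two distinct vertices of a connected ribbon graph into one can only decrease the number of boundary components or keep it fixed, and that across all $n$ identifications at least one must strictly decrease it (since $P\sqcup Q$ has $b(P)+b(Q)$ boundary components and the connected result, being connected with $|V(P)|+|V(Q)|-n$ vertices, cannot have too many), yielding $\gamma$ strictly larger. Combined with \eqref{l1.1} this gives the strict inequality in \eqref{l1.3}, and the $P\leftrightarrow Q$ symmetry throughout follows since all the counts $\chi(P),\chi(Q),n$ are symmetric and, for \eqref{l1.1}, was shown directly above.
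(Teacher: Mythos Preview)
Your argument for item~\eqref{l1.1} is correct and takes a genuinely different route from the paper. You use the counting identities $|V(G^A)|=b\bigl((V(G),A)\bigr)$ and $|F(G^A)|=|V(G^{A^c})|=b\bigl((V(G),A^c)\bigr)$ (both immediate from Definition~\ref{d.pd} and Proposition~\ref{p.pd2}\eqref{p.pd2.3}), and then evaluate these boundary-component counts explicitly for the spanning subgraphs of an $n$-sum. The paper instead constructs an explicit cellular embedding of $(P\oplus_n Q)^{E(P)}$ by surgery on cellular embeddings $P^*\subset\Sigma_P$ and $Q\subset\Sigma_Q$: it identifies the resulting surface as $\Sigma_P\#\Sigma_Q$ with $n-1$ extra handles attached, and reads off the Euler-characteristic and genus statements from that. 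Your route is shorter and purely combinatorial; the paper's makes the topological source of the extra genus (the $n-1$ handles) visible.

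Where you go astray is in items~\eqref{l1.2} and~\eqref{l1.3}. In this paper $\chi(G)$ denotes the Euler characteristic of the \emph{closed} surface in which the connected ribbon graph $G$ cellularly embeds, so $\gamma(G)=2-\chi(G)$ with no ``$-b$'' correction. Since $P$, $Q$ (connected by Definition~\ref{d.ns}) and $(P\oplus_n Q)^{E(Q)}$ (partial duality preserves connectedness) are all connected, item~\eqref{l1.1} yields immediately
\[
\gamma\bigl((P\oplus_n Q)^{E(Q)}\bigr)\;=\;2-\bigl(\chi(P)+\chi(Q)-2n\bigr)\;=\;\gamma(P)+\gamma(Q)+2(n-1),
\]
which is exactly~\eqref{l1.2} for $n=1$ and~\eqref{l1.3} for $n\ge 2$. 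The ``main obstacle'' you anticipate does not exist; your formula $\gamma=2-\chi-b$ belongs to the punctured-surface convention and is inconsistent with the closed-surface $\chi$ you just computed, and that mismatch is what sends you into the unnecessary boundary-component chase. Separately, the structural claim that $(P\oplus Q)^{E(Q)}$ is $P$ $1$-summed with $Q^*$ is false in general: when the half-edges of $P$ and $Q$ interleave around the shared vertex, the $P$-edges get distributed among \emph{several} of the new vertices of the partial dual (e.g.\ take $P$ and $Q$ each a single orientable loop on a common vertex with cyclic order $p\,q\,p\,q$; then $p$ is a loop in any $P\oplus Q^*$ but is a non-loop edge in $(P\oplus Q)^{\{q\}}$). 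Fortunately that claim is unneeded once you use the one-line derivation above.
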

For the proof of the Lemma we recall a few basic facts about the classification of surfaces. We let  $T^2$ denote the torus, and \RP the real projective plane. The connected sum,  $\Sigma  \# \Sigma' $,  of two surfaces $\Sigma$ and  $\Sigma' $ is obtained by deleting the interior of a disc in each surface and identifying the two boundaries.
We have $\mathbb{R}\mathrm{P}^2 \#T^2=\mathbb{R}\mathrm{P}^2\#\mathbb{R}\mathrm{P}^2\#\mathbb{R}\mathrm{P}^2$, and  $\mathbb{R}\mathrm{P}^2\#\mathbb{R}\mathrm{P}^2$ is the Klein bottle.
A handle is an annulus $S^1\times I$, where $S^1$ is a circle and $I$ is the unit interval. By adding a handle to $\Sigma$, we mean that we remove the interiors of two discs from $\Sigma$, and identify each boundary component of the punctured surfaces with a distinct boundary component of $S^1\times I$.  Adding a handle to $\Sigma$ either connect sums a torus or a Klein bottle to $\Sigma$, depending upon how it is attached.

\begin{figure}
\centering
\subfigure[A ribbon graph \mbox{$G=P\oplus_3 Q$.}]{
\includegraphics[scale=.7]{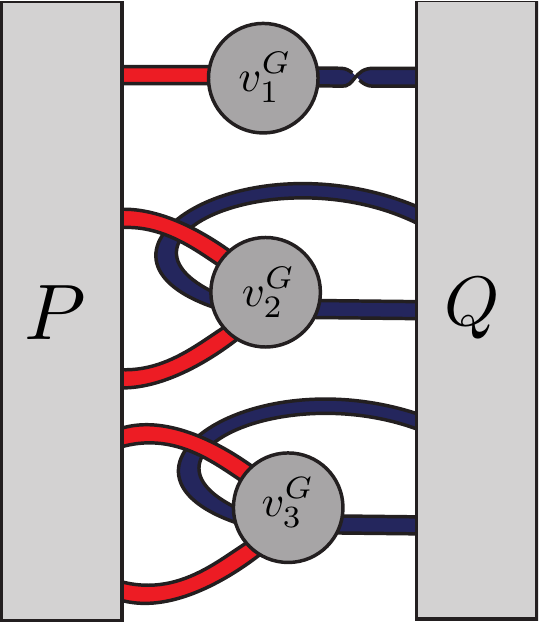}
\label{f.pl1}
}
\hspace{10mm}
\subfigure[The 3-summands $P$ and $Q$. ]{
\includegraphics[scale=.7]{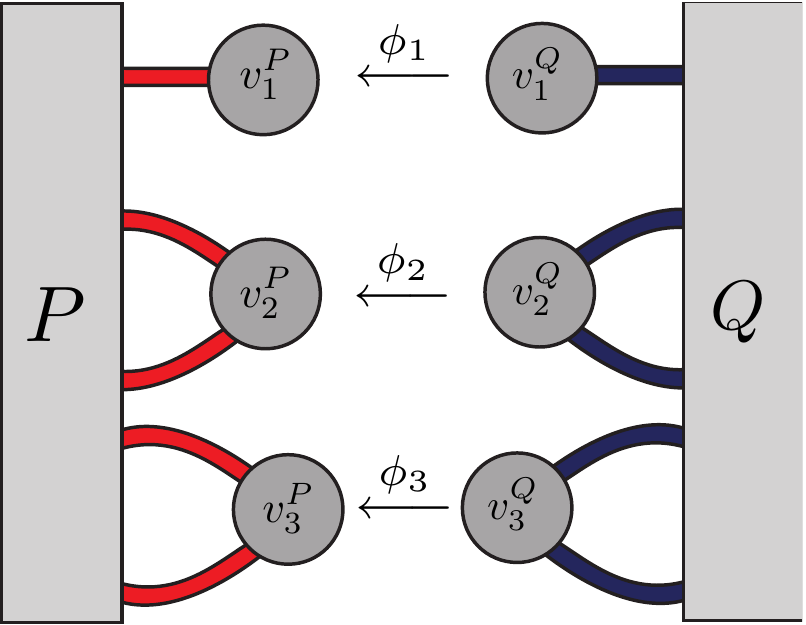}
\label{f.pl2}
}\\
\subfigure[Cellular embeddings  $P\subset \Sigma_P$ and $Q\subset \Sigma_Q$.]{
\includegraphics[scale=.6]{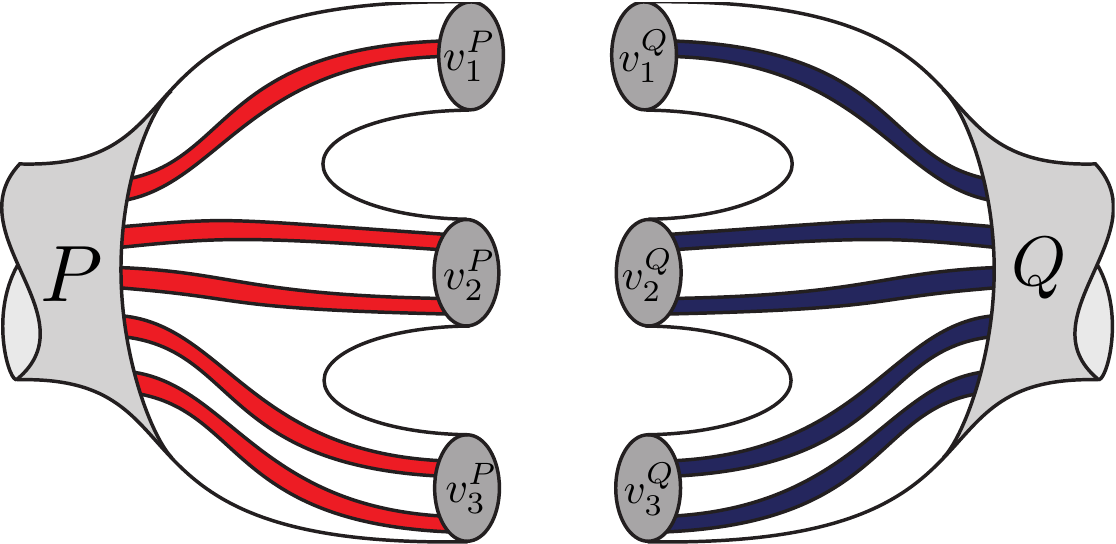}
\label{f.pl3}
}
\hspace{10mm}
\subfigure[Cellular embeddings $P^*\subset \Sigma_P$ and $Q\subset \Sigma_Q$.]{
\includegraphics[scale=.6]{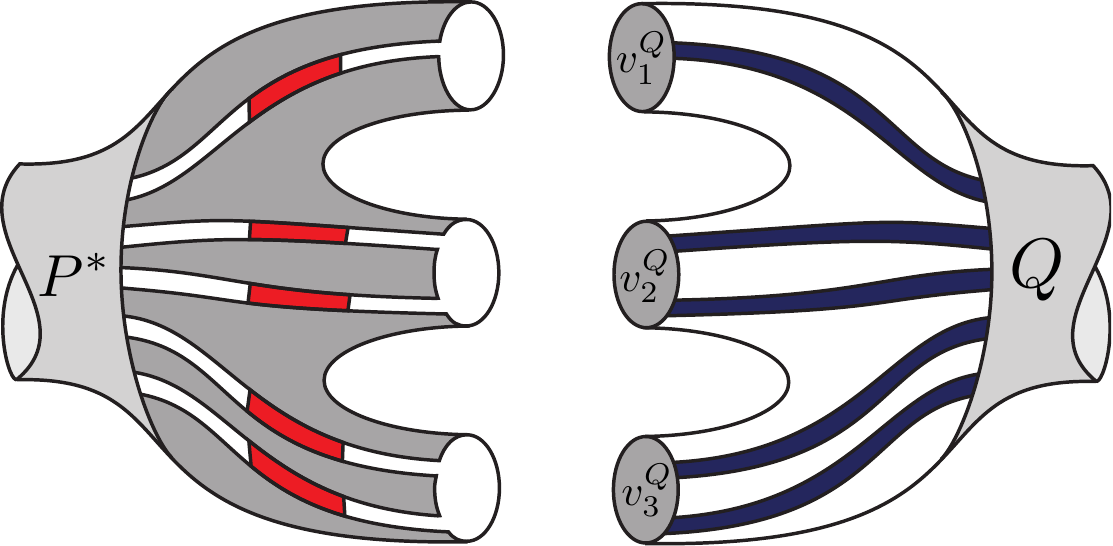}
\label{f.pl4}
}
\hspace{10mm}
\subfigure[A cellular embedding of $(P\oplus_n Q)^{E(P)}$.]{
\includegraphics[scale=.7]{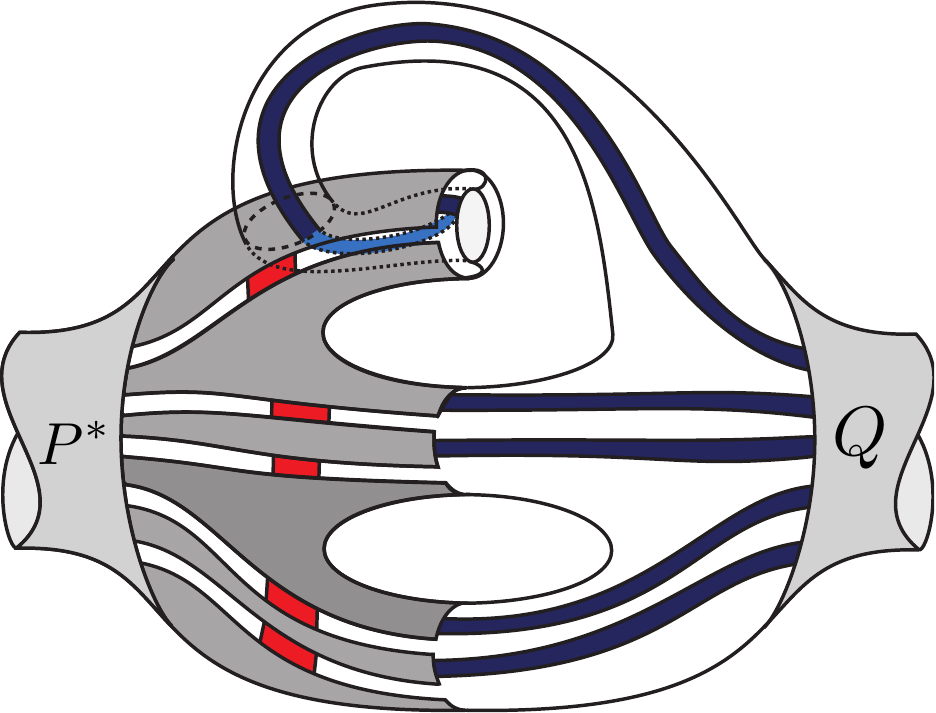}
\label{f.pl5}
}
\caption{Obtaining a cellular embedding of $(P\oplus_n Q)^{E(P)}$ as in the proof of Lemma~\ref{l1}.}
\label{f.pl}
\end{figure}

\begin{proof}[Proof of Lemma~\ref{l1}]
The first equality in each of the three items follows  since  $g((P\oplus_n Q)^{E(Q)})=g((P\oplus_n Q)^{E(P)})^*)= g((P\oplus_n Q)^{E(Q)})$, or by using the fact that $P\oplus_n Q=Q\oplus_n P$.

For the remaining identities, our strategy is to use cellular embeddings of $P$ and  $Q$ to construct a cellular embedding of the partial dual $(P\oplus_n Q)^{E(P)}$, counting the number of handles we need to add to obtain this cellular embedding. The construction of the cellular embedding described below is illustrated in Figure~\ref{f.pl}.

Since $\ga$, $\chi$ and partial duality act disjointly on connected components, we can, and will, assume without loss of generality that $G:=P\oplus_n Q$ is connected. It follows that $P$ and $Q$ are also connected.
Suppose that the $n$-sum occurs at the vertices $v_1^G,\ldots , v_n^G$ of $G$.   
Then there are vertices $v_1^P,\ldots , v_n^P$ of $P$, and $v_1^Q,\ldots , v_n^Q$ of $Q$, such that $G=P\oplus_n Q$ is obtained by identifying $v_i^P$ and $v_i^Q$, for each $i$. (See Figures~\ref{f.pl1} and~\ref{f.pl2}.)
Suppose also that for each $i$, $\phi_i: v_i^Q\rightarrow v_i^P$ is the mapping that identifies   $v_i^P$ and $v_i^Q$, and that this identification results in the vertex $v_i^G$ of $G$.  Let $\phi_i|_{\partial}$, denote the restriction of $\phi_i$ to the boundary of the vertices. 

Cellularly embed $P$ into a surface $\Sigma_P$, and $Q$ into a surface $\Sigma_Q$. (See Figure~\ref{f.pl3}.)
Below, we will need to keep track of the location of the vertices $v_1^P,\ldots , v_n^P$ on the surface $\Sigma_P$. To do this, for $i=1,\ldots ,n$, let $\fD_i^P$ denote the disc on $\Sigma_P$  on which   $v_i^P$ lies, so $\fD_i^P:=\Sigma_P \cap v_i^P$. Note that $\phi_i|_{\partial}$ maps the boundary of $v_i^Q$ to the boundary of $\fD_i^P$.

Construct an embedded ribbon graph $H$ as follows: 
\begin{enumerate}
\item Form the geometric dual $P^*$ of $P$, embedding $P^*$ in the natural way in $\Sigma_P$ so that the vertices of $P\subset \Sigma_P$ form the faces of $P^*\subset \Sigma_P$ and vice versa. (See Figure~\ref{f.pl4}.)
\item Noting that the $\fD_i^P$ are faces of $P^*\subset \Sigma_P$, delete the interior of each disc $\fD_i^P$ and the interior of each vertex $v_i^Q \subset \Sigma_Q$. Identify the boundaries of the resulting punctured surfaces according to the mappings  $\phi_i|_{\partial}$. (See Figure~\ref{f.pl5}.)
\end{enumerate} 
This process results in an embedding of a ribbon graph $H$ in a surface $\Sigma_H$ (since, under the identification,  each  arc on an edge  of $Q$ that meets a $v_i^Q$ is attached to an arc on the boundary of a vertex of $P^*$, and this arc does not meet an edge of $P^*$).
Each component of  $ \Sigma_H \bs H $ corresponds to  either a component of  $\Sigma_P\bs P^*$, of  $\Sigma_Q\bs Q$, or is obtained by merging such components. $P^*$ and $Q$ are both cellularly embedded. As $P^*$ is cellularly embedded, no face of $P^*$ touches a disc $\fD_i^P$ twice (otherwise it is not a disc). As the faces of $P^*$ correspond to  the vertices of $P$, no face of $P^*$ can touch more than one of the discs $\fD_i^P$. It then follows that each of the components of  $ \Sigma_H \bs H $ that arise by merging faces is a disc, and therefore each face of $ \Sigma_H \bs H $ is a disc. Thus $H$ is cellularly embedded in $\Sigma_H$.

%
%
 
 Next we show that $H=(P\oplus_n Q)^{E(P)}$. To do this we decorate the ribbon graph $G$ with labelled arrows then follow the construction of $H$ focussing on what happens to the ribbon graphs.  Arbitrarily label and orient each edge of $G=P\oplus_n Q$. Wherever an edge meets a vertex $v^G_i$, place  an arrow that points in the direction determined by the edge orientation and labelled with the label of that edge. From this decorated ribbon graph, construct a decorated ribbon graph $\vec{P}$  by 
deleting all of the edges in $E(Q)$, and then deleting any isolated vertices. Similarly,  construct a decorated ribbon graph $\vec{Q}$  by deleting all of the edges in $E(P)$ and then deleting any isolated vertices. (Note that the maps $\phi_i$ that recover $G$ from $P$ and $Q$ by identifying vertices also apply to $\vec{P}$ and $\vec{Q}$. Thus $G$ is recovered from $\vec{P}$ and $\vec{Q}$ by identifying vertices using $\phi_i$. Observe that when $\phi_i$ identifies $v_i^P$ and $v_i^Q$ the arrows on the vertices are identified. Moreover, the map $\phi_i$ can be completely determined by matching up the arrows on $v_i^P$ and $v_i^Q$ so that the labels and orientations match, and then extending the identification to rest of the vertex in the obvious way.) 
 The construction of the (non-embedded) ribbon graph $H$ can can be described in the following way:   delete the vertices $v_i^Q$ of $\vec{Q}$ (but not their incident edges) and, for each $i$, identify each arrow that was on $v_i^Q$ with the arrow on  $\vec{Q}$ of the same label (this describes the identifications under the $\phi_i|_{\partial}$). But, as the dual of an isolated vertex is an isolated vertex, this is just a description  of   $[(P\oplus_n Q) \vec{-} E(Q)]^* \vec{+} E(Q) $ in which the edges are attached in a particular order. It follows by Proposition~\ref{p3} that  $H=(P\oplus_n Q)^{E(P)}$, as required.

So far we have shown that $H=(P\oplus_n Q)^{E(P)}$ is cellularly embedded in $\Sigma_H$. It remains to determine the surface $\Sigma_H$.
To do this observe that  $\Sigma_H$ can be obtained by: (1) starting with $\Sigma_P$ and $\Sigma_Q$, deleting the interiors of $v_1^Q$ and $\fD_1^P$, and identifying their boundaries to form $\Sigma_P\# \Sigma_Q$; (2) deleting the interiors of $v_2^Q$ and $\fD_2^P$, and identifying their boundaries, adding a handle to  $\Sigma_P\# \Sigma_Q$; (3) repeating this step for $i=3,\ldots, n$, adding a further $n-2$ handles.
Thus $\Sigma_H$ is obtained by adding $n-1$ handles to  $\Sigma_P\# \Sigma_Q$. We then have $\chi(\Sigma_H) =  \chi(\Sigma_P)+\chi(\Sigma_Q)-2n$, giving the first item in the lemma.
Also, as adding a handle to a surface corresponds to connect summing it with either a torus or a Klein bottle, we have 
\[ \Sigma_H =  \Sigma_P\# \Sigma_Q  \#  \underbrace{T^2  \# \cdots  \#  T^2}_a \#   \underbrace{\mathbb{R}\mathrm{P}^2  \#  \cdots \#  \mathbb{R}\mathrm{P}^2}_{2b}, \]
where $a+2b=n-1$. 
(For example, in  Figure~\ref{f.pl}, $\Sigma_H= \Sigma_P\# \Sigma_Q  \#  T^2 \#  \mathbb{R}\mathrm{P}^2\#  \mathbb{R}\mathrm{P}^2$.)  
Thus  $\gamma (\Sigma_H) \geq  \ga(\Sigma_P)+\ga(\Sigma_Q)$, with equality if and only if $n=1$. The second and third items of the lemma follow. 
\end{proof}

\begin{proof}[Proof of Theorem~\ref{t1}]
First  suppose that $A$ determines a \bi of $G$. 
 We will prove that  $\ga(G^A) = \ga(G|_A)+ \ga(G-A)$ by induction on the length of a \bit.

 If $A$ determines a \bi of a ribbon graph of length $1$ the result is trivial. If $A$ determines a \bi of a ribbon graph of length $2$, the result follows immediately from Lemma~\ref{l1}.

For the inductive step, assume that the assertion holds for all ribbon graphs and edge sets that define a \bi of length less than $l$.
Suppose that $G$ is a ribbon graph and that $A\subseteq E(G)$ defines a \bi of $G$ of length $l\geq 3$. As $A$ defines a non-trivial \bit, we have that  
\begin{equation}\label{et13}G= H_1 \oplus H_2 \oplus \cdots \oplus H_l , \end{equation}
where the $H_i$ are in 1-1 correspondence with the components of $G|_A$ and $G|_{A^c}$, and every $1$-sum occurs at a different vertex and involves a component of $G|_A$ and $G|_{A^c}$. 
Note that, as $\gamma$ is additive over  components, and as isolated vertices are of genus zero, 
\begin{equation}\label{et14}
\ga(H_1)+ \cdots +\ga(H_{l})=  \ga(G|_A)+ \ga(G|_{A^c})
=  \ga(G|_A)+ \ga(G-A).
\end{equation}
As $H_l$ is the last subgraph in the sequence of $1$-sums in Equation~\eqref{et13},  exactly one $1$-sum involves a vertex of $H_l$. Thus we may write
\begin{equation}\label{et11}
 G= H_1 \oplus H_2 \oplus \cdots  \oplus (H_i \oplus H_l)  \oplus \cdots       \oplus H_{l-1}.
  \end{equation}
There are now two cases to consider: when $E(H_l)\subseteq A$, and when $E(H_l)\not\subseteq A$. First suppose that $E(H_l)\subseteq A$, and so $E(H_i)\not\subseteq A$. Then  
\begin{equation}\label{et120}
G^{E(H_l)} 
  = (H_1 \oplus H_2 \oplus \cdots  \oplus (H_i \oplus H_l)  \oplus \cdots       \oplus H_{l-1} )^{E(H_l)}    = (H_1 \oplus H_2 \oplus \cdots  \oplus (H_i \oplus H_l)^{E(H_l)}  \oplus \cdots       \oplus H_{l-1} ),
\end{equation}
where we have used the facts that, by Proposition~\ref{p1}, if the $1$-sum $H_i \oplus H_l$ occurs at $v$ then all other vertices of $H_i$ are also vertices of $ (H_i \oplus H_l)^{E(H_l)}$, and that every $1$-sum in \eqref{et11} occurs at a different vertex.
By noting that $(H_i \oplus H_l)^{E(H_l)}$ can be regarded as a single $1$-summand in this sequence, we see that  $G^{E(H_l)}$ can be written as a sequence of $1$-sums of length $l-1$; with each $1$-summand involving a component of $G^{E(H_l)}|_{A\bs E(H_l)}$ and of $G^{E(H_l)}|_{(A\bs E(H_l))^c }$.  (Note that $(H_i \oplus H_l)^{E(H_l)}$ is a component of $G^{E(H_l)}|_{(A\bs E(H_l))^c }$ since $E(H_l)\subseteq A$ and  $E(H_i)\not\subseteq A$.)  Thus $A\bs E(H_l)$ defines a \bi of $G^{E(H_l)}$ of length $l-1$. The inductive hypothesis then gives
\begin{equation}\label{et12}
\ga((G^{E(H_l)})^{A\bs E(H_l)}) = 
\ga( (G^{E(H_l)})|_{A\bs E(H_l)} )+  \ga( G-( A\bs E(H_l) )  )
\end{equation}
 But as $\gamma$ is additive over  components, and isolated vertices are of genus zero, we can use \eqref{et120} to rewrite \eqref{et12} as
 \begin{multline*}
\ga((G^{E(H_l)})^{A\bs E(H_l)}) = 
\ga(H_1)+ \cdots \ga((H_i \oplus H_l)^{E(H_l)}) +\cdots +\ga(H_{l-1}) \\
= \ga(H_1)+ \cdots \ga(H_i) +\cdots +\ga(H_{l-1})+\ga(H_{l}) 
=  \ga(G|_A)+ \ga(G-A),
\end{multline*}
where the second equality follows by Lemma~\ref{l1}, and the third by Equation~\eqref{et14}.
Finally, by Proposition~\ref{p.pd2},  $ \ga(G^A)= \ga((G^{E(H_l)})^{A\bs E(H_l)})$, and the result follows, completing the case where $E(H_l)\subseteq E(G)$.

Now suppose that  $E(H_l)\not\subseteq A$. By Proposition~\ref{p2}, as $A$ defines a \bi of $G$ with its sequence of $1$-sums \eqref{et13},  the complementary subset $A^c$ also defines a \bi of $G$ with sequence of $1$-sums given by \eqref{et13}. Moreover, $E(H_l)\subseteq A^c$ and so the previous case gives that 
\begin{equation}\label{et15} \ga(G^{A^c})=  \ga(G|_{A^c})+ \ga(G-{A^c}). \end{equation}
Using Proposition~\ref{p.pd2}, we have $\ga(G^{A^c})= \ga((G^{A^c})^*) =  \ga(G^{A})$. Also, $ \ga(G|_{A^c})=   \ga(G-{A})$, and $\ga(G-{A^c})=\ga(G|_A)$. Substituting these into \eqref{et15} gives $\ga(G^{A})=  \ga(G|_{A})+ \ga(G-{A})$, completing the proof of the `if' case of Theorem~\ref{t1}.

\medskip

For the converse, let $G$ be a ribbon graph and $A\subseteq E(G)$ be such that $\ga(G^A) = \ga(G|_A)+ \ga(G-A)$. 
If $A=E(G)$ or $A=\emptyset$, the result is trivial, so assume that this is not the case.
Then, since $G|_A$ and $G|_{A^c}$  partition the edge set of $G$, we can write
\begin{equation}\label{et16}
G= H_1 \oplus_{n_2} H_2 \oplus_{n_3} \cdots \oplus_{n_l} H_l, 
\end{equation}
where $H_1, \ldots, H_l$ are the components of $G|_A$ and $G|_{A^c}$, and where each $n_i$-sum involves one component of $G|_A$ and one of $G|_{A^c}$ (see the discussion at the beginning of Subsection~\ref{ss.bi}).
 To prove the theorem, we need to show that in \eqref{et16} each $n_i=1$.

Either  $E(H_l)\subseteq A$ or $E(H_l)\not\subseteq A$.
First suppose that $E(H_l)\subseteq A$. As $H_l$ is the last ribbon subgraph in the sequence of $n_j$-sums in \eqref{et16},  exactly one $n_j$-sum involves vertices of $H_l$. Suppose that $H_l$ is $n_l$-summed to $H_i$. 
 Then $E(H_i)\not\subseteq A$. Thus, by Propositions~\ref{p.pd2} and ~\ref{p1}, we can write
 \begin{equation}\label{et18} 
G^A = (G^{A\bs E(H_l)})^{E(H_l)} 
 =  ( (H_1 \oplus_{n_2} H_2 \oplus_{n_3} \cdots  \oplus_{n_{l-1}} H_{l-1}  )^{A\bs E(H_l)}  \oplus_{n_l} H_l  )^{E(H_l)}.
 \end{equation}
Lemma~\ref{l1} then gives that if $E(H_l)\subseteq A$ then
 \begin{equation}\label{et19} 
\ga(G^A)\geq 
 \ga (( H_1 \oplus_{n_2} H_2 \oplus_{n_3} \cdots  \oplus_{n_{l-1}} H_{l-1}  )^{A\bs E(H_l)} ) +\ga( H_l  ),
 \end{equation}
with  equality if and only if $n_l=1$.

On the other hand, if $E(H_l)\not\subseteq A$, then $E(H_l)\subseteq A^c$, by arguing as before we can write
\[ 
G^{A^c} = (G^{{A^c}\bs E(H_l)})^{E(H_l)} 
 =  (( H_1 \oplus_{n_2} H_2 \oplus_{n_3} \cdots  \oplus_{n_{l-1}} H_{l-1}  )^{A^c\bs E(H_l)}  \oplus_{n_l} H_l  )^{E(H_l)}.
\]
Then Lemma~\ref{l1} gives that 
\begin{equation}\label{et110}   \ga(G^{A^c})\geq   \ga (( H_1 \oplus_{n_2} H_2 \oplus_{n_3} \cdots  \oplus_{n_{l-1}} H_{l-1}  )^{A^c\bs E(H_l)})  +\ga( H_l  )   \end{equation}
with  equality if and only if $n_l=1$.
However, using Proposition~\ref{p.pd2}, $\ga(G^{A^c}) = \ga((G^{A^c})^*)=\ga(G^A)$.
Also 
\[\ga (( H_1 \oplus_{n_2}\cdots  \oplus_{n_{l-1}} H_{l-1}  )^{A^c\bs E(H_l)})
= \ga ((( H_1 \oplus_{n_2}  \cdots  \oplus_{n_{l-1}} H_{l-1}  )^{A^c\bs E(H_l)})^*)
= \ga ((H_1 \oplus_{n_2}  \cdots  \oplus_{n_{l-1}} H_{l-1}  )^{A}),
\]
where the last equality uses the facts that $H_l$ is not a summand and $E(H_l)\not\subseteq A$.
Equation~\eqref{et110} then gives that if $E(H_l)\not\subseteq A$, then
\begin{equation}\label{et111}   \ga(G^{A})\geq   \ga (( H_1 \oplus_{n_2} H_2 \oplus_{n_3} \cdots  \oplus_{n_{l-1}} H_{l-1}  )^{A})  +\ga( H_l  )   \end{equation}
with  equality if and only if $n_l=1$. (Note that in \eqref{et111} the exponent $A$, can be written as $A\bs E(H_l)$.)

Finally, repeated applications of Equations~\eqref{et19} and~\eqref{et111} then give  
\begin{multline*}
 \ga(G^{A})\geq  \ga (( H_1 \oplus_{n_2} H_2 \oplus_{n_3} \cdots  \oplus_{n_{l-1}} H_{l-1}  )^{A\bs E(H_l)})  +\ga( H_l  )   \\
 \geq  \ga (( H_1 \oplus_{n_2} H_2 \oplus_{n_3} \cdots  \oplus_{n_{l-2}} H_{l-2}  )^{A\bs E(H_l) \bs E(H_{l-1} )})  +\ga( H_{l-1}  )   +\ga( H_l  )  \\
 \geq \cdots\geq   \ga(H_1)+\ga(H_2)+\cdots  +\ga( H_l  ),
 \end{multline*}
with equality if and only if $n_1=n_2=\cdots =n_l=1$, as required.
%
%
%
%
%
%
\end{proof}

\section{Characterizing the partial duals of low genus ribbon graphs}\label{s4}
In this section we apply Theorem~\ref{t1} to obtain a characterization of partial duals of plane graphs, and of \RP graphs, in terms of the existence of a \bit. To do this we introduce the concepts of  \pbis and  \rpbist, which are \bis with a restriction on the topology of the ribbon  subgraphs  $G|_A$ and $G|_{A^c}$. We show, in Theorem~\ref{t2}, that \pbis and  \rpbis characterize the partial duals of plane graphs and \RP graphs, respectively. We then go on, in Theorem~\ref{t.tog}, to  relate all of the  \pbis and  \rpbis  that a ribbon graph can admit.

\subsection{A characterization of plane and \RP partial duals}
We begin  with the observation that if $A$ defines a \bi of $G$ in which  one component  of $G|_A$ or of $G|_{A^c}$ is of genus $g$, and all of the  others are plane, then, by  Theorem~\ref{t1},  the partial dual $G^A$ is also of genus $g$. 
Motivated by this, we make the following definitions.
\begin{definition}\label{d2}
Let $G$ be a connected ribbon graph and $A\subseteq E(G)$. Then we say that
\begin{enumerate}
\item $A$ defines a {\em \pbi} if $A$ defines a \bi in which every component of $G|_A$ and of $G-A$ is plane;
\item $A$ defines a {\em \rpbi} if $A$ defines a \bi in which exactly one component of   $G|_A$ or of $G-A$ is \RP and all of the other components are plane.
\end{enumerate}
\end{definition}

\begin{example}\label{e.rpbis}
 Some examples of \rpbis and \pbis are given below. In these examples we focus on \rpbist, referring the reader to \cite{Mo5} for additional examples of \pbist. The examples given below should be compared with the examples of \bis given in Example~\ref{e.bis}.

\begin{enumerate}
\item\label{e.rpbis1}   Only $\emptyset$ and $E(G)$ define \rpbis of the ribbon graph  shown in Figure~\ref{f.chpd1}  ({\em cf.} Item~\eqref{e.bis1} of Example~\ref{e.bis}).

\item \label{e.rpbis2} Only $\{1,2\}$ and $\{3\}$ define  \rpbis of the ribbon graph  shown in Figure~\ref{f.chpd4} ({\em cf.} Item~\eqref{e.bis2} of Example~\ref{e.bis}).

\item\label{e.rpbis3} Only $A$ and $E(G)$ define \rpbis of the ribbon graph in Figure~\ref{f.ampd1} ({\em cf.} Item~\eqref{e.bis3} of Example~\ref{e.bis}).

\item\label{e.rpbis6}  For the ribbon graph in Figure~\ref{f.ampd4}, only $\{3,4\}$ and $\{1,2,5\}$ define \rpbis ({\em cf.} Item~\eqref{e.bis5} of Example~\ref{e.bis}).

\item\label{e.rpbis4} For the ribbon graph $G$ in Figure~\ref{f.s1}, 
let $B_1=\{ 1,2,3,4,9,14 \}$ and
$B_2=\{ 6,7,8,  10,11,12,13\}$.
Then  only the following sets define  \bist: $B_1$, $B_1\cup \{5\}$, $B_1\cup \{15\}$, $B_1\cup \{5,15\}$, and the complementary sets $B_2$, $B_2\cup \{5\}$, $B_2\cup \{15\}$, $B_2\cup \{5,15\}$ ({\em cf.} Item~\eqref{e.bis4} of Example~\ref{e.bis}).

\item\label{e.rpbis5}  For an example of a \pbit,  let $G$ be the ribbon graph  in Figure~\ref{f.s1}, and  let $G'$ be the orientable ribbon graph obtained from $G$ by removing the half-twists from the edges $2$ and $3$. Then the sets from Item~\ref{e.rpbis4} are exactly those that define \pbis of $G'$.
\end{enumerate}
\end{example}

We now come to our second main result which is a characterization of partial duals of plane and \RP graphs in terms of \pbis and \rpbist, respectively. The plane case in Theorem~\ref{t2} was first appeared in \cite{Mo5}, however the proof given here is new. 
\begin{theorem}\label{t2}
Let $G$ be a connected ribbon graph and $A\subseteq E(G)$. Then
\begin{enumerate}
\item \label{t2.1} $G^A$ is a plane ribbon graph if and only if $A$ defines a \pbi of $G$;
\item \label{t2.2}  $G^A$ is an \RP ribbon graph if and only if $A$ defines an \rpbi of $G$.
\end{enumerate}
\end{theorem}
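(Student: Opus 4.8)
The plan is to derive Theorem~\ref{t2} as a fairly direct consequence of Theorem~\ref{t1}, since that theorem already packages the hard topological work (the handle-counting in Lemma~\ref{l1}). The statement to prove has two parts, and part~\eqref{t2.1} is the special case $\ga=0$ of the argument for part~\eqref{t2.2}, so I would write the proof of \eqref{t2.2} in full and then indicate the (easier) modifications for \eqref{t2.1}.

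For the ``if'' direction of \eqref{t2.2}: suppose $A$ defines an \rpbit. By definition this is in particular a \bit, so Theorem~\ref{t1} applies and gives $\ga(G^A)=\ga(G|_A)+\ga(G-A)$. By the definition of \rpbit, exactly one component of $G|_A$ or of $G-A$ is \RP (contributing $\ga=1$) and every other component is plane (contributing $\ga=0$); summing over components, $\ga(G|_A)+\ga(G-A)=1$, hence $\ga(G^A)=1$. It remains to check $G^A$ is connected: $G$ is connected by hypothesis, and partial duality preserves the underlying abstract graph up to the operations that do not disconnect it — more carefully, $G^A$ is connected whenever $G$ is (this follows from Proposition~\ref{p3} together with the fact that $G\vec{-}A^c$ has the same vertex connectivity pattern, or one can cite the standard fact that $G^A$ and $G$ have the same number of connected components). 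So $G^A$ is connected with $\ga(G^A)=1$, i.e. an \RP ribbon graph. For part~\eqref{t2.1} the only change is that \emph{every} component of $G|_A$ and of $G-A$ is plane, so the sum of Euler genera is $0$ and $\ga(G^A)=0$; combined with connectedness this says $G^A$ is plane.

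For the ``only if'' direction of \eqref{t2.2}: suppose $G^A$ is an \RP ribbon graph, so $\ga(G^A)=1$. Now $\ga$ is additive over components and every component of $G|_A$, $G-A$ has $\ga\ge 0$, so $\ga(G|_A)+\ga(G-A)\ge 0$ with a clean combinatorial meaning. The key point is to get the inequality $\ga(G^A)\ge\ga(G|_A)+\ga(G-A)$ to run the right way so that equality is forced; here I would invoke Theorem~\ref{t1} in the contrapositive form together with a monotonicity observation. Concretely: write $G=H_1\oplus_{n_2}\cdots\oplus_{n_l}H_l$ as in the discussion opening Subsection~\ref{ss.bi}, with the $H_i$ the components of $G|_A$ and $G|_{A^c}$ and each $n_i$-sum mixing the two sides. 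The chain of inequalities established in the converse part of the proof of Theorem~\ref{t1} (equations~\eqref{et19}, \eqref{et111}, and their iteration) shows $\ga(G^A)\ge\sum_i\ga(H_i)=\ga(G|_A)+\ga(G-A)$, with equality iff all $n_i=1$, i.e.\ iff $A$ defines a \bit. Since $\sum_i\ga(H_i)\ge 0$ and $\ga(G^A)=1$, either $\sum_i\ga(H_i)=0$ or $\sum_i\ga(H_i)=1$. If the sum is $0$ then $\ga(G^A)\ge 0$ gives no contradiction by itself, so I need the sharper statement: if $A$ does \emph{not} define a \bit, then some $n_i\ge 2$ and the corresponding inequality is strict, forcing $\ga(G^A)>\sum_i\ga(H_i)$. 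Combined with Theorem~\ref{t1}'s equality case I get: $A$ defines a \bit\ iff $\ga(G^A)=\sum_i\ga(H_i)$. If $A$ does define a \bit, then $1=\ga(G^A)=\ga(G|_A)+\ga(G-A)$, which by additivity of $\ga$ over components forces exactly one component to be \RP\ and the rest plane — an \rpbit. So it remains only to rule out the case where $A$ does not define a \bit; there $\ga(G^A)>\sum_i\ga(H_i)\ge 0$ is consistent with $\ga(G^A)=1$ only if $\sum_i\ga(H_i)=0$ and exactly one $n_i$-sum contributes a single unit of extra Euler genus. This is the one genuinely delicate point, and I would handle it by a finer analysis of Lemma~\ref{l1}\eqref{l1.3}: a $2$-sum already forces $\ga((P\oplus_2 Q)^{E(P)})\ge\ga(P)+\ga(Q)+1$, and I must check that when all $H_i$ are plane and there is at least one $n_i\ge 2$, the total excess is still consistent only with a configuration that \emph{does} reduce, after partial dualizing, to an \rpbit — equivalently, I should argue directly that such a $G$ cannot have $G^A$ of Euler genus $1$ unless the offending $2$-sum can be absorbed.

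Rather than fight that case separately I would instead organize the ``only if'' proof to mirror the ``if'' of Theorem~\ref{t1} run backwards on the \emph{partial dual}: apply the converse machinery of Theorem~\ref{t1} to $G^A$ (using Proposition~\ref{p.pd2}\eqref{p.pd2.3}, $(G^A)^A=G$) to conclude that $A$ defines a \bit\ of $G$ precisely because $\ga((G^A)^A)=\ga(G)$ need not hold in general — hmm, this is circular. The cleanest route, and the one I expect to be the main obstacle, is simply: establish $\ga(G^A)\ge\ga(G|_A)+\ga(G-A)$ \emph{unconditionally} (this is exactly the iterated inequality already in the proof of Theorem~\ref{t1}, which holds for \emph{any} $A$, not just those defining \bist), with equality iff $A$ defines a \bit. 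Granting that, $\ga(G^A)=1$ and $\ga(G|_A)+\ga(G-A)\in\{0\}\cup\{1,2,\dots\}$ combine to give: if $\ga(G|_A)+\ga(G-A)=1$ then equality holds, so $A$ defines a \bit, and additivity of $\ga$ over components identifies it as an \rpbit; and if $\ga(G|_A)+\ga(G-A)=0$, I must show this forces $\ga(G^A)=0\ne 1$ — which follows because when $G|_A$ and $G-A$ are all plane, the same handle count in Lemma~\ref{l1} shows adding the $n_i-1$ handles at each sum produces a surface of Euler genus $\sum(n_i-1)$, and one checks this equals $\ga(G^A)$, so $\ga(G^A)=1$ would force exactly one $n_i=2$ and all others $1$, a configuration which one verifies (again via Lemma~\ref{l1}, now tracking orientability) yields instead a \bit\ after relabelling, contradiction. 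I would present this last contradiction as the technical heart of the proof and defer to an explicit small computation with $(P\oplus_2 Q)^{E(P)}$. For part~\eqref{t2.1}, $\ga(G^A)=0$ forces $\ga(G|_A)+\ga(G-A)=0$ and equality in the unconditional inequality, hence $A$ defines a \bit\ with all summands plane, i.e.\ a \pbit.
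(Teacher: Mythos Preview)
Your ``if'' directions are correct and match the paper's argument exactly.

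For the ``only if'' direction you take a genuinely different route from the paper: you want to extract from the converse half of Theorem~\ref{t1}'s proof the \emph{unconditional} inequality $\ga(G^A)\ge\ga(G|_A)+\ga(G-A)$, with equality iff $A$ defines a \bit, and then read off the result. This is a sound strategy and, once fixed, is arguably cleaner than the paper's approach. The paper instead works geometrically: it cellularly embeds $G^A$ in $S^2$ (respectively \RPt), looks at the complementary regions of $G^A-A^c$, and uses them to \emph{directly} show that the components of $G|_A$ and $G-A$ have the required genera; only then does it invoke Theorem~\ref{t1} to conclude $A$ is a \bit. Your approach trades that topological construction for a sharper reading of Lemma~\ref{l1}.

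The gap is in your Case~2 of the \RP\ direction (all components of $G|_A$ and $G-A$ plane, $\ga(G^A)=1$). You write that the handle count gives excess $\sum(n_i-1)$, that $\ga(G^A)=1$ then forces exactly one $n_i=2$, and that this configuration ``yields instead a \bit\ after relabelling, contradiction''. This last assertion is wrong: a decomposition containing a genuine $2$-sum is not a \bit\ and cannot be made into one by relabelling. More importantly, you have the excess off by a factor of two. In the proof of Lemma~\ref{l1}, $\Sigma_H$ is obtained from $\Sigma_P\#\Sigma_Q$ by adding $n-1$ handles, and each handle (whether it produces a torus or a Klein bottle summand) contributes exactly $2$ to the Euler genus, so $\ga((P\oplus_n Q)^{E(P)})=\ga(P)+\ga(Q)+2(n-1)$. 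Feeding this into the iterated peeling of \eqref{et18}--\eqref{et111} gives the exact identity
\[
\ga(G^A)=\ga(G|_A)+\ga(G-A)+2\sum_{i\ge 2}(n_i-1),
\]
so the excess is always a non-negative \emph{even} integer. With this in hand Case~2 dies immediately: $\ga(G^A)=1$ and $\ga(G|_A)+\ga(G-A)=0$ would require the excess to equal $1$, which is impossible. Hence $\ga(G|_A)+\ga(G-A)=1$, equality holds, $A$ defines a \bit, and additivity of $\ga$ over components forces it to be an \rpbit. Your plane case (\ref{t2.1}) was already fine because there $\ga(G^A)=0$ pins everything down without needing the parity.
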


\begin{proof}
~

\noindent\underline{\eqref{t2.1} $\impliedby$:}
If $A$ defines a \pbi of $G$ then $\gamma(G_A)+\gamma(G|_A)=0$, and so, by Theorem~\ref{t1}, $\gamma(G^A) =0$, and so $G^A$ is a plane graph.

\medskip

\noindent\underline{\eqref{t2.2} $\impliedby$:} Similarly, if $A$ defines an \rpbi of $G$ then $\gamma(G_A)+\gamma(G|_A)=1$, and so, by Theorem~\ref{t1}, $\gamma(G^A) =1$, and so $G^A$ is an \RP graph.

\medskip

\noindent\underline{\eqref{t2.1} $\implies$:} 
  Suppose that $G^A$ is a plane ribbon graph. We need to show that $A$ defines a \pbit. \ 
 If $A=\emptyset$ or $A=E(G)$ the result is trivial, so assume this is not the case.
 
We will show that all of the components of $G|_A$ and $G-A$ are plane, so $\ga(G|_A)+\ga(G-A)=0$. Also, since $G^A$ is plane, $\ga(G^A)=0$. It then follows from Theorem~\ref{t1} that $A$ defines a \bi of $G$, and since  all of the components of $G|_A$ and $G-A$ are plane, this is a \pbit.
 
 We first show that $\ga(G|_A)=0$. Observe that, as $G^A$ is plane, all of the components of $G^A-A^c$ are also plane. Using Proposition~\ref{p2}, we have
\begin{equation}\label{et2.1}
 G^A-A^c =  [\left(G\,\vec{-} \,A^c\right)^* \vec{+}\, A^c]-A^c=\left(G\,- \,A^c\right)^* .
 \end{equation}
 Since geometric duality acts disjointly on connected components and preserves the genus of a ribbon graph, it follows that  every component of $G-A^c$, and therefore of $G|_A$, is plane.

We now show that $G-A$ is plane. Our argument is illustrated in Figure~\ref{f.a}.
 Cellularly embed $G^A$ in $S^2$. We will consider $G^A-A$ and $G^A-A^c$ as ribbon subgraphs embedded in $G^A\subset S^2$. 
 Then $S^2\bs\Int (G^A-A^c)$ (where $\Int$ denotes the interior) is a collection of punctured and non-punctured discs, {\em i.e.} it is a collection of  punctured spheres. (See Figure~\ref{f.a2}.) Observe that all of the edges of $G^A$ that belong to $A^c$ are embedded in these punctured spheres, and each embedded edge belonging to  $A^c$ meets the boundary  of exactly one of the punctured spheres in exactly two arcs. 
 For each punctured sphere $D$, form a plane ribbon graph $H_D$ by filling in the punctures of $D$ with discs that form the vertex set $V_D$ of $H_D$. Let $E_D$ be the edges of $A^c$ that lie in $D$, and let $H_D=(V_D,E_D)$. As this construction gives an embedding of each $H_D$ in $S^2$, it follows that each $H_D$ is plane. Let $H$ be the union of the $H_D$. (See  Figure~\ref{f.a4}.) Note that $\ga(H)=0$.

\begin{figure}
\centering
\subfigure[A plane ribbon graph $G^A$.]{
\includegraphics[height=4.5cm]{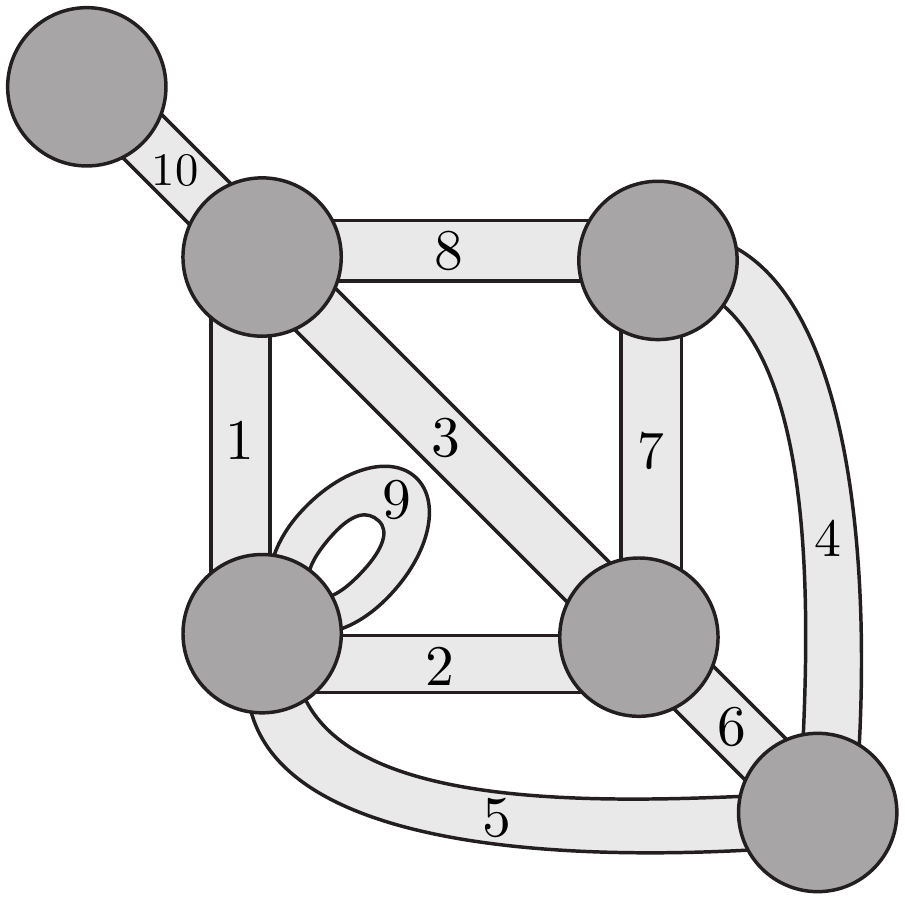}
\label{f.a1}
}
\hspace{11mm}
\subfigure[ $S^2\bs\Int (G^A-A^c)$, with $A=\{1,2,3,4\}$, giving  edges embedded in  a once punctured sphere and a thrice punctured sphere.]{
\includegraphics[height=5.5cm]{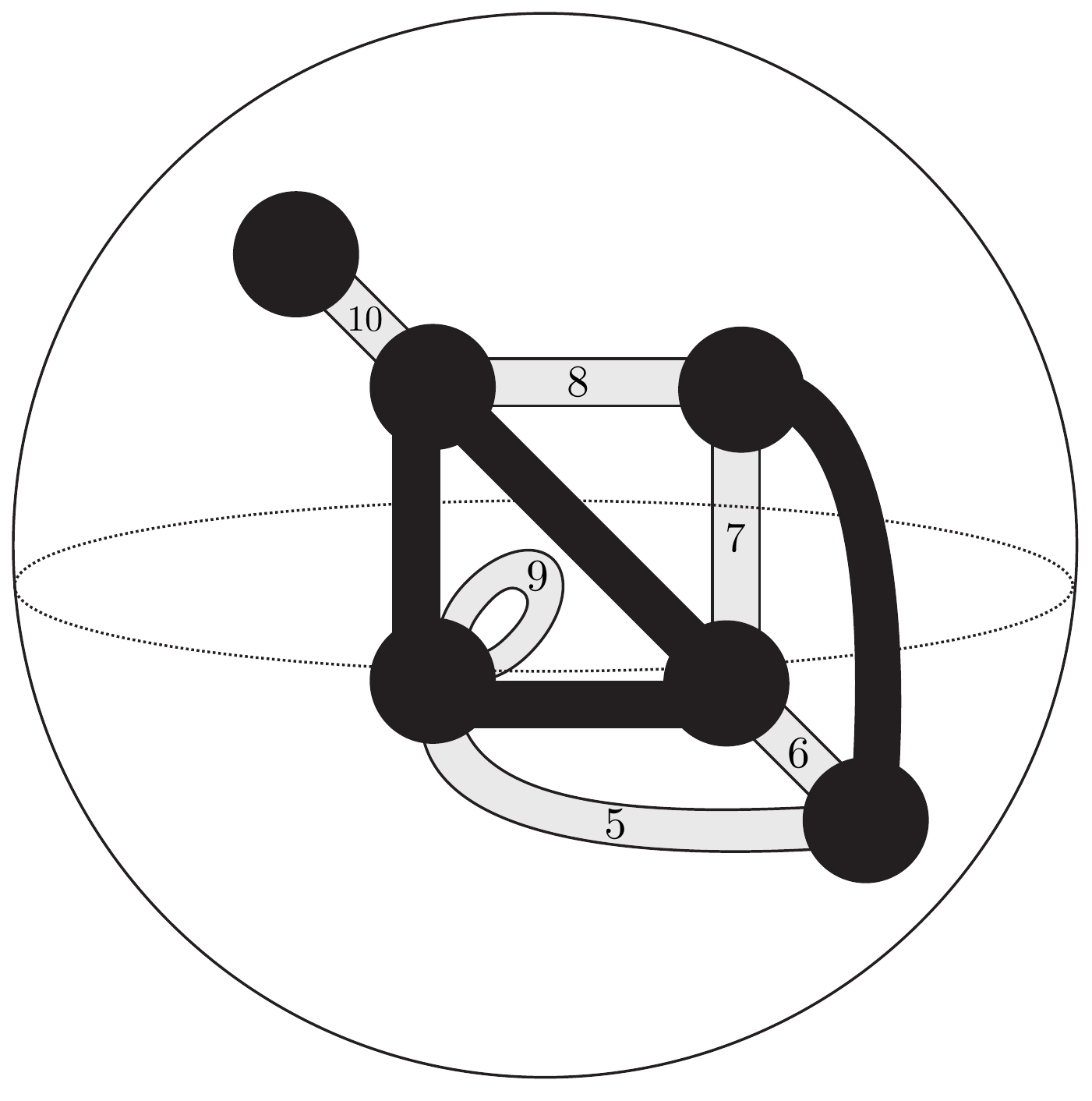}
\label{f.a2}
}
\hspace{11mm}
\subfigure[The resulting ribbon graph $H$.]{
\includegraphics[height=4.5cm]{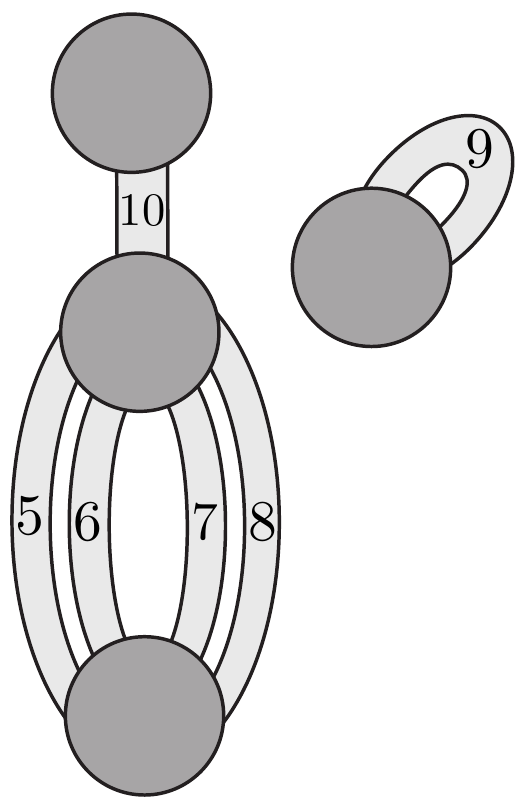}
\label{f.a4}
}
\caption{The construction of $H=(G^A)^A-A$ from the proof of Item~\ref{t2.1} of Theorem~\ref{t2}.}
\label{f.a}
\end{figure}

The ribbon graph $H$ is   $(G^A)^A-A$.  To see why this is, consider the construction of $(G^A)^A-A$ using Definition~\ref{d.pd}. 
We begin by arbitrarily orienting and labelling each of the edges of $G$. We add labelled arrows to the boundary components of $G^A-A^c$ using the labelling and orientation of the edges to obtain an arrow presentation for $(G^A)^A$. To obtain  $(G^A)^A-A$ from this, delete all of the arrows labelled by elements of $A$. This process can be simplified by adding only the arrows labelled by elements of $A^c$ to the boundary components of  $G^A-A^c$. The resulting arrow presentation clearly describes $H$. Thus $H=(G^A)^A-A$, but as $(G^A)^A=G$, we have that $H=G-A$ and $\ga (G-A)=\ga(H)=0$.

  Thus   $\gamma(G|_A)+\gamma(G-A)=0=\gamma(G^A)$, and so, using Theorem~\ref{t1}, $A$ defines a \bi of $G$ in which every component of $G|_A$ and of $G-A$ is plane. It follows that $A$ defines a \pbit, as required.

\medskip

\noindent\underline{\eqref{t2.2} $\implies$:} Our approach to the \RP case is similar, but more involved, to that of the plane case above.

Suppose that $G^A$ is an \RP graph.  We need to show that $A$ defines an \rpbi of $G$. If $A=\emptyset$ or $A=E(G)$ the result is trivial, so assume this is not the case.

We show that $G|_A$ and $G-A$ have exactly one \RP component between them, and all of the other components are plane. From this it follows that  $\gamma(G_A)+\gamma(G|_A)=1=\gamma(G^A)$, and so $A$ defines a \bi of $G$ which, by the genus and orientability of the connected components of $G|_A$ and $G-A$, must be an \rpbit.

There are two cases to consider: when $G^A-A^c$ has an \RP component, and when it does not. 

\noindent{\underline{Case 1:}} Our argument is straightforward adaption of the proof of the plane case of the theorem given above. Cellularly embed $G^A$ in \RPt.
Suppose that $G^A-A^c$ has an \RP component. Since all non-contractible cycles in an \RP graph intersect,  $G^A-A^c$ has exactly one \RP component. Denote this component by $K$.
Since all of the other components of $G^A-A^c$ must lie in the faces of $K\subset G^A\subset \rp$, they must all be plane. By Equation~\eqref{et2.1}, 
 $(G-A^c)^*$  must then  have exactly one \RP component and  all of the others must be plane. 
Since geometric duality acts disjointly on connected components, preserving genus and orientability, it follows that  $G-A^c$, and so $G_A$, has exactly one \RP component and  all of the others are plane. Thus $\ga(G_A)=1$.

It remains to show that all of the components of $G-A$ are plane. To do this, cellularly embed $G^A$ in \RP and regard $G^A-A$ and $G^A-A^c$ as being embedded in $G^A\subset \rp$.
Let $K$ be the \RP component of $G^A-A^c$ (which exists by hypothesis). 
 As all  non-contractible cycles in an \RP graph intersect, $\rp\bs \Int(K)$ is a collection of discs. Since $K$ is a component of $G^A-A^c\subset \rp$, it follows  that $\rp \bs \Int (G^A-A^c)$ is a collection of punctured spheres. Observe that all of the edges of $G^A$ that belong to $A^c$ are embedded in these punctured spheres, and each embedded edge belonging to  $A^c$ meets the boundary  of exactly one of the punctured spheres in exactly two arcs. 
 For each punctured sphere $D$, form a plane ribbon graph $H_D$ by filling in the punctures of $D$ with discs that form the vertex set $V_D$ of $H_D$. Let $E_D$ be the edges of $A^c$ that lie in $D$, and let $H_D=(V_D,E_D)$. As this construction gives an embedding of each $H_D$ in $S^2$, it follows that each $H_D$ is plane. Let $H$ be the union of the $H_D$.

The ribbon graph $H$ is   $(G^A)^A-A$: to obtain an arrow presentation of $(G^A)^A-A$, as in Definition~\ref{d.pd}, arbitrarily orient and label  each edge of $G^A$, add labelled arrows to  the boundary components of $G^A-A^c$ as described in Definition~\ref{d.pd}, but only for the edges in $A^c$ (as we only want an arrow presentation for $(G^A)^A-A$). The resulting arrow presentation clearly describes $H$ and so $H= (G^A)^A-A=G-A$. Thus $\ga (G-A)=\ga(H)=0$.

 As   $\gamma(G|_A)+\gamma(G|_A)=1=\gamma(G^A)$,  Theorem~\ref{t1} gives that $A$ defines a \bi of $G$, and this \bi is an \rpbit.

\noindent{\underline{Case 2:}} Suppose that $G^A-A^c$ does not have  an \RP component. (Note that $G^A-A$ may or may not have an \RP component.)
It follows that every component of $G^A-A^c$ is plane. By Equation~\eqref{et2.1},  each component of  $(G^A-A^c)^*$, and therefore of $G^A-A^c$, and so $G|_A$ is plane. 

It remains to show that $G-A$ has exactly one \RP component and all of the others are plane. Cellularly embed $G^A$ in \RP and  regard the components of $G^A-A^c$ and  $G^A-A$ as embedded ribbon subgraphs of $G^A \subset \rp$. To obtain the components of $(G^A)^A-A$ from $G^A$ (via Definition~\ref{d.pd})  arbitrarily orient and label  each edge of $G^A$, add labelled arrows to  the boundary components of $G^A-A^c$, as described in Definition~\ref{d.pd}, but only for the edges in $A^c$ (as we only want an arrow presentation for $(G^A)^A-A$). The arrow marked boundary components give an arrow presentation for $(G^A)^A-A$. To obtain the ribbon graph $(G^A)^A-A$, fill in the boundary cycles to form vertices of the ribbon graphs, and add the edges in the way prescribed by the labelled arrows, as in Figure~\ref{arrows} and Subsection~\ref{ss.ap}.

The following provides an alternative description of this construction of $(G^A)^A-A$. This construction is illustrated in Figure~\ref{f.b}. Start with the boundary cycles of $G^A-A^c\subset G^A\subset \rp$. Denote the set of boundary cycles by $\mathcal{C}$. Take the union of $\mathcal{C}$ with all of the embedded edges in $A\subset G^A \subset \rp$. This defines a set of cycles in \RP with embedded ribbon graph edges between them (see Figure~\ref{f.b2}). Denote this set by $\mathcal{G}$. Then $(G^A)^A -A$ is obtained from $\mathcal{G}$ by forming a ribbon graph by placing each cycle in $\mathcal{G}$ on the boundary of a disc, which becomes the vertex of a ribbon graph (see Figure~\ref{f.b3}).  

In $\mathcal{G}$, there is exactly one element $K$ that contains a non-contractible (topological) cycle (since all  non-contractible (topological) cycles in  \RP intersect). $K\subset \rp$ is cellularly embedded and so   gives rise to an \RP component of the ribbon graph in $(G^A)^A -A$.  
All of the components in $\mathcal{G}\bs K$ lie in $\rp \bs K$, which is a set of discs (as $K$ contains a non-contractible cycle). This means that every element  of $\mathcal{G}\bs K$ can be cellularly embedded in a disc, and so each one  gives rise to a plane ribbon graph. Thus $(G^A)^A -A$, and so $G-A$, contains exactly one \RP component and all of the others are plane.  
It then follows that     $\gamma(G|_A)+\gamma(G|_A)=1=\gamma(G^A)$.  Theorem~\ref{t1} then gives that $A$ defines a \bi of $G$, and this \bi must be an \rpbit, completing the proof of the theorem.
%
%
\end{proof}

\begin{figure}
\centering
\subfigure[An \RP ribbon graph $G^A$.]{
\includegraphics[height=3cm]{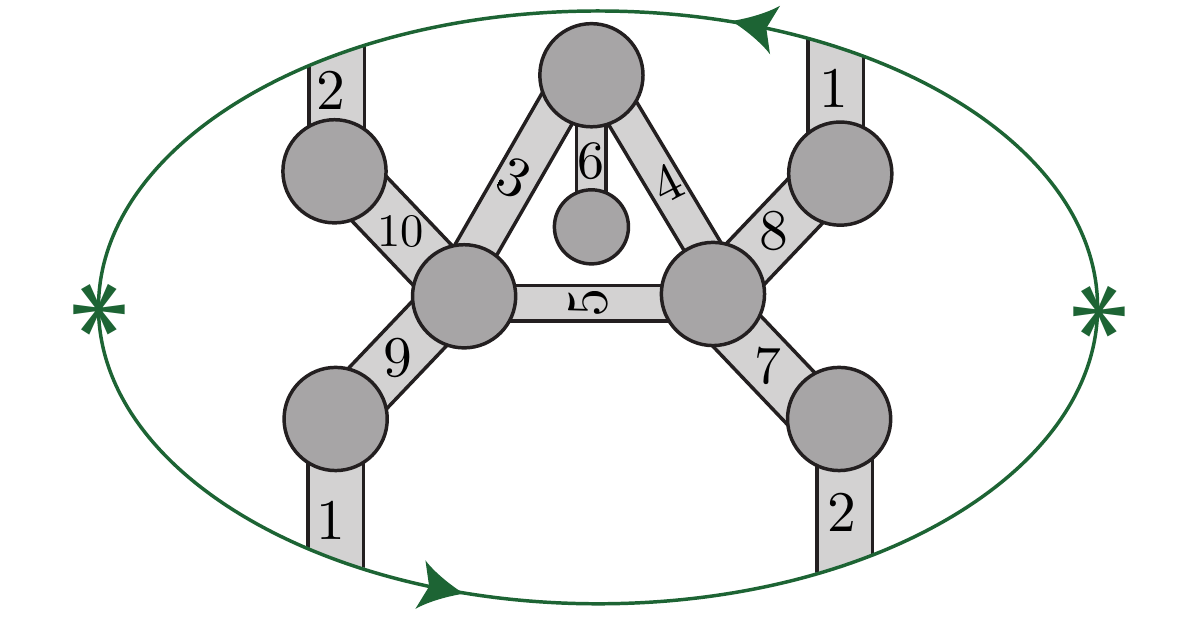}
\label{f.b1}
}
\subfigure[ The set $\mathcal{G}$ for  $A=\{1,2,3,4,5\}$.]{
\includegraphics[height=3cm]{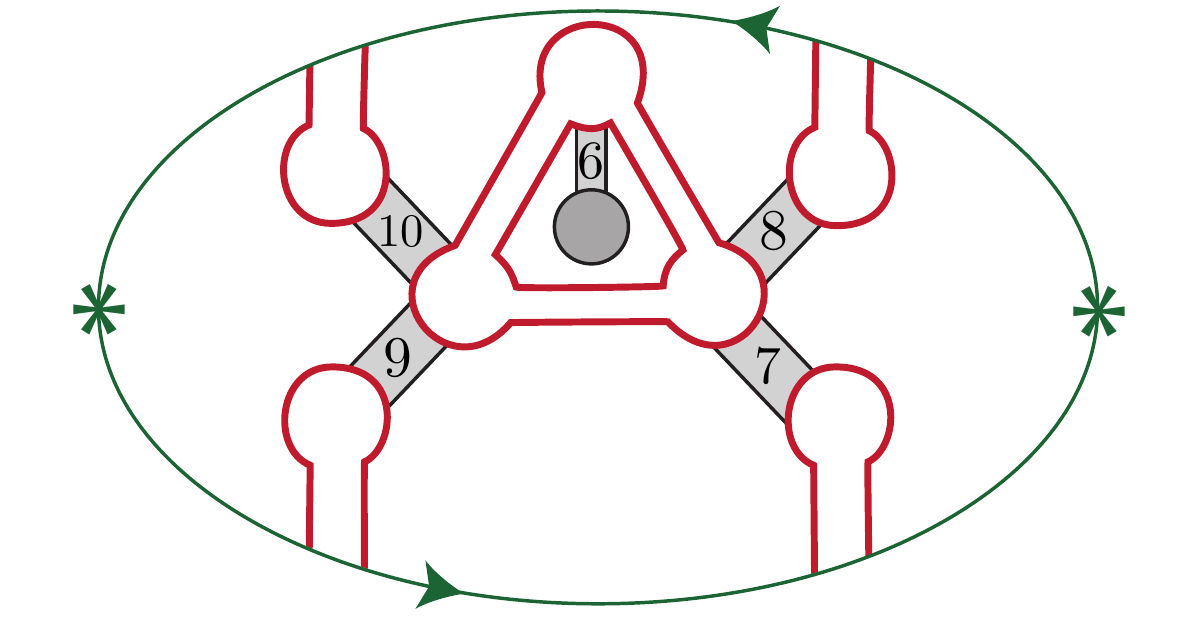}
\label{f.b2}
}
\subfigure[The corresponding ribbon graphs.]{
\includegraphics[height=3.5cm]{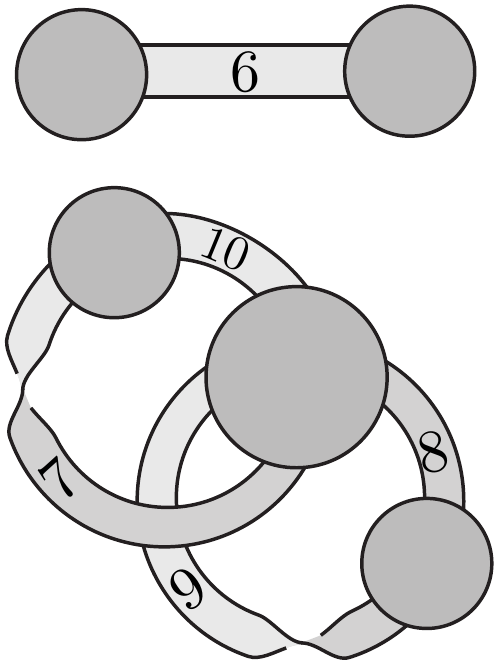}
\label{f.b3}
}
\caption{The construction of $(G^A)^A-A$ from the proof of Case~2 of Item~\ref{t2.2} of Theorem~\ref{t2}.}
\label{f.b}
\end{figure}

\begin{remark}
 Theorem~\ref{t2} tells us that \bis provide  characterizations of partial duals of plane and \RP graphs. However,  partial duals of higher genus ribbon graphs can not be characterized in terms of \bist. One can extend the concept of \pbis and \rpbis by saying  that $A$ defines a $\Sigma$-\bi of $G$ if it defines a \bi in which $\gamma(G|_A)+\gamma(G-A)=\gamma(\Sigma)$. It then follows from Theorem~\ref{t1} that $\gamma(G^A) =\gamma(\Sigma)$, {\em i.e.} if $A$ defines a $\Sigma$-\bi then  $G^A$ is a $\Sigma$ ribbon graph.
  The converse, however, does not hold: if $G^A$ is a $\Sigma$ ribbon graph, $A$ need not define a  $\Sigma$-\bi of $G$ (unless $\gamma(\Sigma)=0$ or $1$).
  For example, let $C$ be the plane $2$-cycle, and $D$ be the \RP $2$-cycle. If $e$ is an edge of $C$, then $C^{\{e\}}$ is a graph on a torus $T^2$, but  $\{e\}$ does not define a $T^2$-\bi as $\gamma(G|_A)=\gamma(G-A)=0$. Similarly,   if $e$ is an edge of $D$, then $D^{\{e\}}$ is a graph on a Klein bottle, $K$,  but  $\{e\}$ does not define a $K$-\bi as  $\gamma(G|_A)=\gamma(G-A)=0$. Counter examples for any $\gamma(G^A)>1$ can be obtained by joining $C$ or $D$ with toroidal or \RP ribbon graphs.
 
Extending Theorem~\ref{t2} to higher genus surfaces requires one to consider $n$-separations of ribbon graphs and is a work in progress.
\end{remark}

\subsection{Relating  \pbis and \rpbis }
Having established the importance of \pbis and \rpbis to partial duality in Theorem~\ref{t2}, we now go on to determine how all of the \pbis and \rpbis that a ribbon graph admits are related to one another.

\begin{definition}
For  $r\geq 1$, let  $G=H_1 \vee H_2 \vee \cdots \vee H_r$ be a ribbon graph. Suppose that  $A\subseteq E(G)$. Then we say that the set  $A\Delta E(H_i)$ is obtained from $A$ by  {\em toggling a  join-summand}. (See Figure~\ref{f.t}.) 

We say that two sets $A$ and $B$ are {\em related by toggling   join-summands} if there is a sequence of sets $A=A_1,A_2,\ldots , A_n=B$ such that each  $A_{i+1}$ is obtained from $A_i$ by    toggling a  join-summand.
\end{definition}

\begin{figure}
\includegraphics[height=25mm]{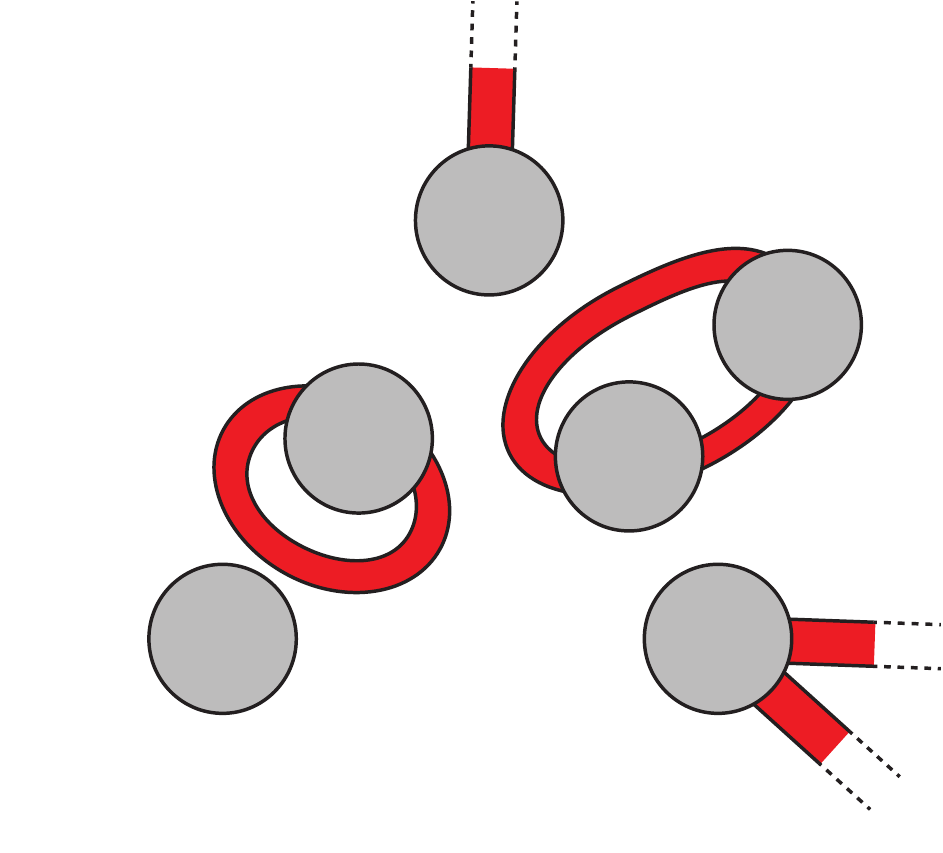}
\raisebox{12mm}{\rotatebox{180}{\includegraphics[width=1cm]{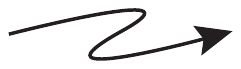}}}
\includegraphics[height=25mm]{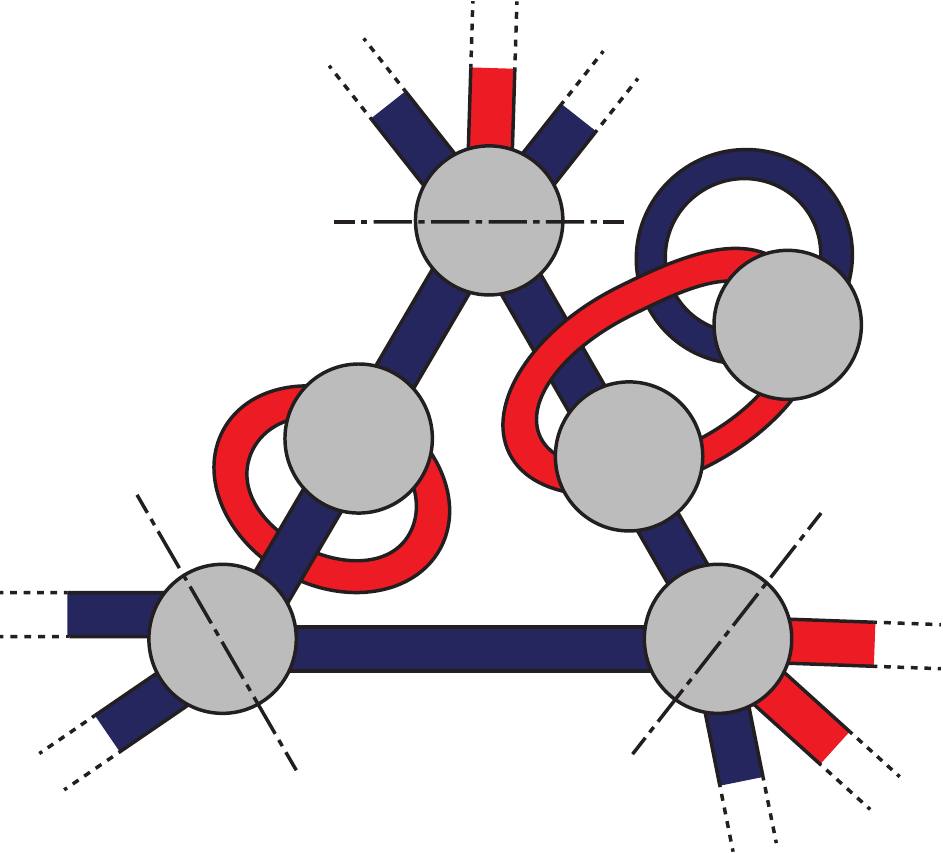} 
\raisebox{10mm}{\includegraphics[width=1cm]{doublearrow}}
\includegraphics[height=25mm]{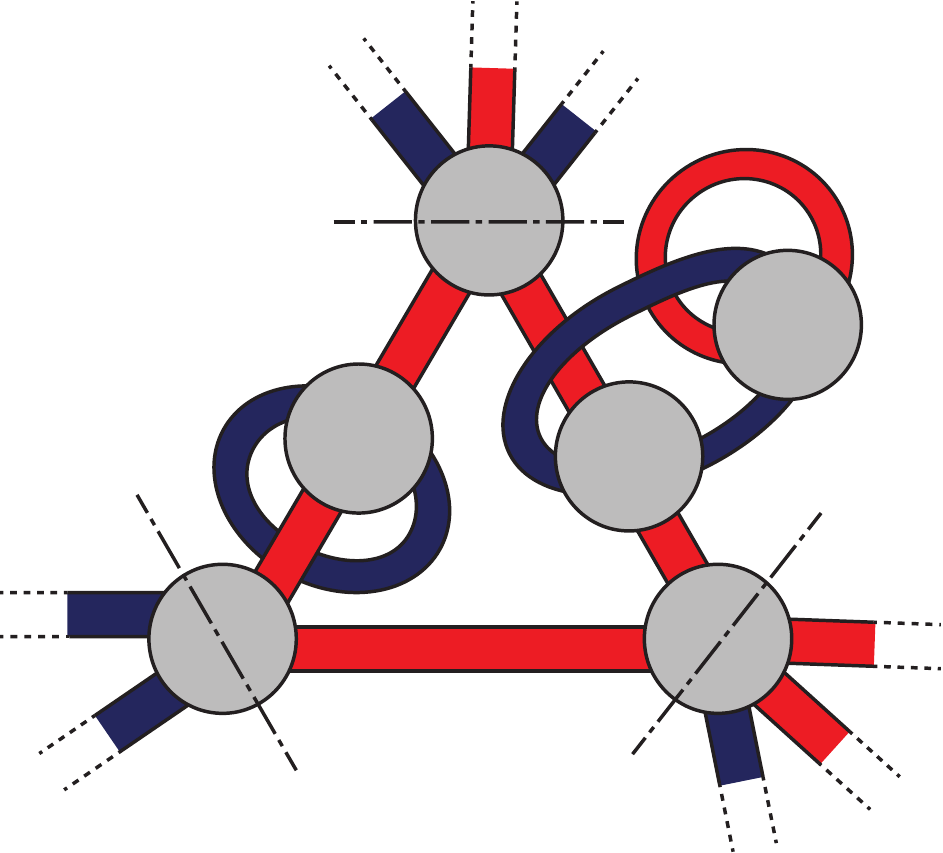}
\raisebox{10mm}{\includegraphics[width=1cm]{arrow}}
\includegraphics[height=25mm]{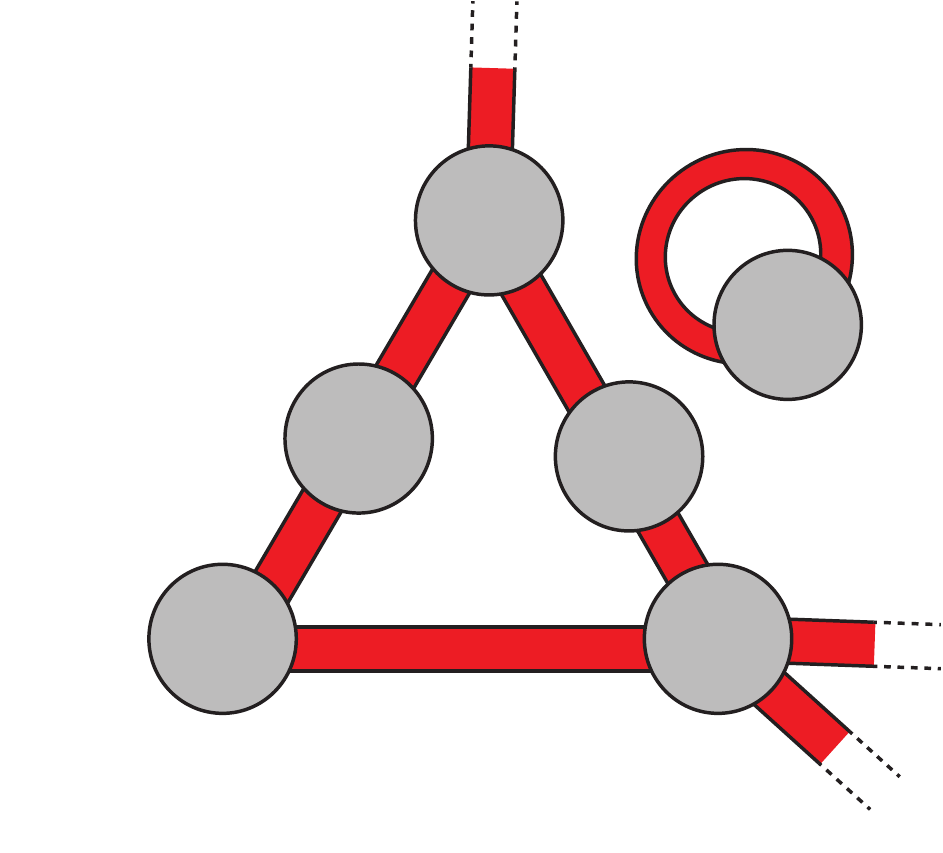}
\caption{Toggling a  join-summand and the  ribbon subgraphs induced by $A$ and $A\Delta E(H_i)$.
}
\label{f.t}
\end{figure}

The following theorem states that all of the \pbis or \rpbis that a ribbon graph admits are related to one another by toggling join-summands. The plane case in Theorem~\ref{t.tog} is from \cite{Mo5} and is included here for completeness.
\begin{theorem}\label{t.tog}
Suppose that  $G$ is  a connected ribbon graph and that $A, B\subseteq E(G)$ either both define   \pbit, or both define   \rpbist. Then $A$ and $B$ are related by toggling join-summands. 
\end{theorem}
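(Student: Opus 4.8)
The plan is to show that any two \pbis (respectively \rpbist) of $G$ can each be transformed, by a sequence of join-summand toggles, into a single canonical \bit, and then observe that one can be reached from the other by concatenating one sequence with the reverse of the other. So the real goal is: given a \pbi or \rpbi defined by $A$, reduce it to a standard form.

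First I would analyze the structure of a \pbi. If $A$ defines a \pbit, then $G = H_1 \oplus H_2 \oplus \cdots \oplus H_l$ where the $H_i$ are the (plane) components of $G|_A$ and $G-A$, alternating between the two sides along the tree $\mathcal{T}$ associated with the sequence of $1$-sums (recall this graph is a tree, as noted after Proposition~\ref{p2}). The key observation I would establish is that \emph{each individual $1$-sum in this sequence is in fact a join}. Indeed, fix a vertex $v$ at which a $1$-sum occurs, say between ribbon subgraphs sitting on opposite sides of the biseparation; since $v$ is a separating vertex and the two sides meet only at $v$, I want to argue that the edges of $G$ incident to $v$ coming from one side all lie on a single arc of $\partial v$ disjoint from the edges on the other side. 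This should follow because the partial dual $G^A$ is plane (or \RPt) together with the genus-additivity of Theorem~\ref{t1}: if a $1$-sum at $v$ were not a join, the interleaving of the two sides around $v$ would force extra Euler genus into $G^A$, contradicting the sharp equality $\ga(G^A)=\ga(G|_A)+\ga(G-A)$. (This is essentially the content of the $n\ge 2$ strict-inequality part of Lemma~\ref{l1} applied locally; a clean way is to group all components of $G|_A$ meeting $v$ versus all of $G|_{A^c}$ meeting $v$ and invoke Lemma~\ref{l1}.)

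Granting that every $1$-sum in the sequence is a join, I can write $G = K_1 \vee K_2 \vee \cdots \vee K_m$ as an iterated join of join-indecomposable ribbon graphs (the "blocks" of the join-decomposition), and each $K_j$ lies entirely on one side of the biseparation, i.e. $E(K_j) \subseteq A$ or $E(K_j) \subseteq A^c$. Toggling a join-summand $K_j$ replaces $A$ by $A \Delta E(K_j)$, which simply moves that block to the other side. Since the join-decomposition of $G$ into indecomposables is essentially unique (this is a standard fact I would record as a small lemma, or cite/prove via the block structure), \emph{any} \pbi of $G$ corresponds to a choice of which blocks $K_j$ go to the $A$-side, subject only to: each block is plane (automatic for a \pbi, since plane-ness of every component forces each block plane) — so in fact \emph{every} assignment of blocks to the two sides yields a \pbit. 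Therefore all \pbis are reachable from the "all blocks on the $A$-side" assignment by toggles, proving the plane case. For the \rpbi case the argument is the same except that exactly one block $K_{j_0}$ has Euler genus $1$ and all others are plane; toggling moves blocks around but the non-planar block stays the unique non-planar one, so again every \bi of $G$ with one \RP block is an \rpbit, and all are related by toggles to the canonical one with all plane blocks on the $A$-side and $K_{j_0}$ on whichever side.

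The main obstacle I anticipate is the first step: rigorously proving that every $1$-sum appearing in a \pbi/\rpbi sequence is a join, and more precisely that one may refine the sequence so that it exhibits $G$ as a genuine iterated join of join-indecomposable blocks with each block on one side. One must be careful that "being a join at $v$" is a statement about \emph{all} edges at $v$ from one side versus the other, not just the two particular summands being combined at that step; handling this cleanly likely requires the regrouping trick (collect all $A$-components incident to $v$ into one subgraph $P_v$, all $A^c$-components into $Q_v$, apply Lemma~\ref{l1}(\ref{l1.3}) to get $\ga(G^{E(P_v)}) > \ga(P_v)+\ga(Q_v)$ when they are $n$-summed with $n\ge 2$, contradict Theorem~\ref{t1}). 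A secondary technical point is establishing uniqueness of the join-decomposition into indecomposables; I would handle this via the observation that join-summands correspond to a specific tree/block decomposition of $G$ and that toggling acts transitively on side-assignments compatible with the planarity/\RP constraint.
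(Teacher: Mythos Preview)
Your proposal contains a genuine error. The ``key observation'' you want to establish --- that each $1$-sum in a \pbi or \rpbi is in fact a join, so that every join-indecomposable block $K_j$ of $G$ lies entirely in $A$ or entirely in $A^c$ --- is false. The example in Figure~\ref{f.s1} already shows this: by Example~\ref{e.rpbis}\eqref{e.rpbis4} the set $A=\{1,2,3,4,9,14\}$ defines an \rpbit, yet by Example~\ref{e.jb}\eqref{e.jb1} the prime join-factorization of $G$ has a single large factor with edge set $E(G)\setminus\{5,15\}$, which meets both $A$ and $A^c$. Your proposed justification via Lemma~\ref{l1} fails because part~\eqref{l1.2} gives the \emph{equality} $\ga((P\oplus Q)^{E(Q)})=\ga(P)+\ga(Q)$ for \emph{every} $1$-sum, join or not; the strict inequality in part~\eqref{l1.3} requires $n\ge 2$, so an interleaved $1$-sum at $v$ contributes no extra Euler genus to $G^A$ and cannot be ruled out this way.

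The paper's argument proceeds differently. It first proves (Lemma~\ref{l.prime}) that a \emph{prime} ribbon graph admits at most two \pbis (respectively \rpbist), and when two exist they are complementary --- but these two biseparations are typically nontrivial, not the all-in/all-out assignments you assume. This is established by a direct combinatorial analysis of how half-edges at each vertex must be partitioned between $A$ and $A^c$ under the planarity/\RP constraint, using an interlacing argument. The theorem then follows because any \pbi or \rpbi of $G$ restricts to one of the two allowed biseparations on each prime join-factor $H_i$, and toggling $H_i$ swaps between them. So the transitive action of toggles is on the $2^r$ choices of which of the two biseparations to use on each $H_i$, not on which side each $H_i$ sits.
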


\begin{example}
Let $G$ be the ribbon graph in Figure~\ref{f.s1}, and let  $B_1=E(G)\bs \{5,15\}$, $B_2=\{5\}$, and $B_3=\{15\}$. Further, for each $i$, let $H_i$ be the ribbon subgraph induced by $B_i$. Then 
$G=H_1\vee H_2\vee H_3$. The set $A=\{1,2,3,4, 9,14\}$ defines an \rpbi of $G$. We have
$A\Delta B_1=\{ 6,7,8,10,11,12,13 \}$,
$A\Delta B_2=\{ 1,2,3,4,5, 9,14 \}$, 
$A\Delta B_3=\{ 1,2,3,4, 9,14,15 \}$,
$A\Delta (B_1\cup B_2)=\{ 5,6,7,8,10,11,12,13 \}$,
$A\Delta (B_1\cup B_3)=\{ 6,7,8,10,11,12,13,15 \}$,
$A\Delta (B_2\cup B_3)=\{ 1,2,3,4,5, 9,14,15 \}$, and
$A\Delta (B_1\cup B_2\cup B_3)=\{ 5,6,7,8,10,11,12,13,15 \}$. Together with $A$, these are precisely the sets that define \rpbis in Item~\eqref{e.rpbis4} of Example~\ref{e.rpbis}.
\end{example}

To prove the theorem we use the following result about  \bis of a prime ribbon graph. A ribbon graph  is said to be {\em prime} if it can not be expressed as a join of (non-trivial) ribbon graphs.

\begin{lemma}\label{l.prime}
Let  $G$ be a connected prime ribbon graph.
\begin{enumerate}
\item Either $G$ does not admit a \pbit, or it admits exactly two \pbist, in which case the \pbis are defined by a set $A$ and its complement $A^c=E(G)\bs A$.

\item Either $G$ does not admit an \rpbit, or it admits exactly two \rpbist, and the \rpbis are defined by a set $A$ and its complement $A^c=E(G)\bs A$.
\end{enumerate}
\end{lemma}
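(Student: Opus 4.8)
The plan is to reduce both parts of the lemma to a single principle --- \emph{a plane (resp.\ \RPt) ribbon graph admitting a non-trivial \pbi (resp.\ \rpbit) is a non-trivial join} --- together with the fact that partial duality commutes with the join operation. I prove item~(1); the proof of item~(2) is word-for-word the same, with ``plane'' replaced by ``\RPt'' and the Euler genus $0$ replaced by $1$. By Proposition~\ref{p2}, if a set $A$ defines a \pbi of $G$ then so does $A^c$ --- the two \bis involve the same collection of components, so the conditions in Definition~\ref{d2} transfer --- and $A\ne A^c$; hence $G$ admits either no \pbi or at least the two \pbis $A$ and $A^c$. It therefore suffices to show that any two sets defining \pbis of $G$ are equal or complementary.

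So suppose $A$ and $B$ both define \pbis of $G$ with $B\notin\{A,A^c\}$; I will contradict the primeness of $G$. Put $H:=G^A$; by Theorem~\ref{t2}, $H$ is a plane ribbon graph. Set $C:=A\,\Delta\,B\subseteq E(G)=E(H)$, so that $C\ne\emptyset$ (as $B\ne A$) and $C\ne E(G)$ (as $B\ne A^c$); by Proposition~\ref{p.pd2}, $H^C=(G^A)^{A\Delta B}=G^B$, which is plane by Theorem~\ref{t2}. Applying Theorem~\ref{t2} to $H$, the set $C$ defines a non-trivial \pbi of $H$; write its sequence of $1$-sums as $H=K_1\oplus\cdots\oplus K_m$ with $m\ge2$ and every $K_i$ plane. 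Now I invoke the first key fact: for any $1$-sum of ribbon graphs, $\gamma(P\oplus Q)\ge\gamma(P)+\gamma(Q)$, with equality if and only if the $1$-sum is a join. Since $\gamma(H)=0=\sum_i\gamma(K_i)$, every $1$-sum in the sequence must be a join; in particular the last one gives $H=\bigl(K_1\oplus\cdots\oplus K_{m-1}\bigr)\vee K_m$, a non-trivial join, say $H=P\vee Q$ with $P,Q$ non-trivial. By the second key fact, partial duality commutes with joins, so $G=H^A=(P\vee Q)^A=P^{A\cap E(P)}\vee Q^{A\cap E(Q)}$; since partial duality preserves edge sets (Proposition~\ref{p.pd2}), both factors are non-trivial, and $G$ is a non-trivial join --- a contradiction. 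Thus $B\in\{A,A^c\}$, which completes the proof.

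The two auxiliary facts are where the work lies. The first --- that an interleaved $1$-sum strictly raises the Euler genus whereas a join does not --- is proved by the cellular-embedding bookkeeping used for Lemma~\ref{l1}: gluing cellular embeddings of $P$ and $Q$ along a single vertex adds no handle when the $1$-sum is a join, and forces at least one handle otherwise. The second --- $(P\vee Q)^A=P^{A\cap E(P)}\vee Q^{A\cap E(Q)}$ --- follows from Proposition~\ref{p3} once one checks that deleting a set of edges from a join, taking the geometric dual, and restoring the edges all respect the join decomposition; the point requiring care is that the geometric dual of a join is the join of the geometric duals, which one verifies by tracking the marking arrows through the construction of the dual. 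I expect this second identity to be the main obstacle to a fully rigorous write-up.
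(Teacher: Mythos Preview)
Your ``first key fact'' --- that $\gamma(P\oplus Q)\ge\gamma(P)+\gamma(Q)$ with equality \emph{only} when the $1$-sum is a join --- is false, and this is where the argument breaks. Take $P$ to be the path $u\!-\!v\!-\!w$ with edges $e_1,e_2$, and $Q$ the path $x\!-\!v\!-\!y$ with edges $e_3,e_4$; form the $1$-sum at $v$ with cyclic order $e_1,e_3,e_2,e_4$. The result is the star $K_{1,4}$, a plane tree, so $\gamma(P\oplus Q)=0=\gamma(P)+\gamma(Q)$; but there is no arc on $\partial v$ meeting both $e_1,e_2$ and neither of $e_3,e_4$, so this $1$-sum is not a join. (In fact this very configuration arises as a \pbi of a plane graph, so the sequence of $1$-sums produced by a \pbi of a plane graph need not be a sequence of joins.) The bookkeeping of Lemma~\ref{l1} does not help: that lemma computes $\gamma\bigl((P\oplus_n Q)^{E(P)}\bigr)$, not $\gamma(P\oplus_n Q)$, and the handle-count there depends only on $n$, not on whether the half-edges interleave.

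Your global strategy can be rescued, but it needs the genuinely non-trivial input of Lemma~\ref{l2}: a \pbi (resp.\ \rpbit) of a plane (resp.\ \RPt) graph is automatically a \pjbi (resp.\ \rpjbit). Granting that, your non-trivial $C$ yields a non-trivial join decomposition of $H$, and then Lemma~\ref{l5}\eqref{l5.1} pulls it back to a join decomposition of $G$ as you intended. Since the proofs of Lemmas~\ref{l2} and~\ref{l5}\eqref{l5.1} (both imported from \cite{Mo5} or argued directly) do not use Lemma~\ref{l.prime}, there is no circularity. This would then be a legitimately different route from the paper's own proof, which instead argues locally at each vertex: it partitions the incident half-edges into blocks according to connectivity in $G-v$ (and membership in non-orientable cycles), shows primeness forces every block to interlace some other block, and deduces that interlaced blocks must lie in opposite sides of any \pbi/\rpbit, so the assignment of a single edge determines $A$. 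Your approach trades that combinatorial analysis for an appeal to Theorem~\ref{t2} and Lemma~\ref{l2}; but as written, the step replacing Lemma~\ref{l2} is simply wrong.
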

\begin{proof}

We prove the second item first. Suppose that $G$ admits an \rpbit.  We will show that the assignment of any edge to either $A$ or $E(G)\bs A$  completely determines an \rpbi for $G$, and that the \rpbi is defined by $A$.

At each vertex $v_i$, partition the set of incident half-edges into blocks $\mathcal{O}_i, \mathcal{A}_{i,1}, \mathcal{A}_{i,2}, \mathcal{A}_{i,3}, \ldots$ according  the following rules: if two half-edges lie in a non-orientable cycle of $G$, place them in  the block $\mathcal{O}_i$; for the remaining half-edges, place them in the same block if and only if there is a path in $G$ between the two half-edges that does not pass through the vertex $v_i$, denoting the resulting blocks of the partition by $ \mathcal{A}_{i,1}, \mathcal{A}_{i,2},  \ldots$.  

We will now show that the blocks at $v_i$ give rise to exactly two possible assignments of the incident edges to the sets  $A$ and $A^c$ and that these assignments are complementary. 

If there is only one block $\mathcal{O}_i$ in the partition of the half-edges incident to $v_i$, then all of the edges incident to $v_i$ must appear in the unique \RP component of an \rpbi of $G$. Thus every edge incident to $v_i$ is either in $A$, or every edge is in $A^c$. 

If there is only one block $\mathcal{A}_{i,j}$  at $v_i$ then there is a path in $G$ that does not pass through $v_i$ between every  pair of half-edges in $\mathcal{A}_{i,j}$. It then follows that  there can not be a $1$-sum occurring at $v_i$ in any \rpbit.  Thus every edge incident to $v_i$ is either in $A$, or every edge is in $A^c$. 

Now suppose that the partition at $v_i$ contains more than one block. Denote these blocks by
$\mathcal{B}_1, \cdots , \mathcal{B}_d$, where each  $B_r$ is a block  $\mathcal{O}_i$ or $ \mathcal{A}_{i,j}$. Arbitrarily choose one of the two cyclic orderings of the half-edges incident to $v_i$.
We say that two blocks $\mathcal{B}_p$ and $\mathcal{B}_q$ {\em interlace} each other if there are half-edges $e, e'\in \mathcal{B}_p$ and $f, f'\in \mathcal{B}_q$ such that we meet the edges in the order $e,f,e',f'$ when travelling round the vertex $v_i$ with respect to the cyclic order. 

Observe that: 
\begin{enumerate}
\item every block $\mathcal{B}_p$ interlaces at least one other block $\mathcal{B}_q$.    (Otherwise $\mathcal{B}_p$ defines a join-summand and so $G$ is not prime.)
\item If $B$ is a set of blocks and $\bar{B}$ is the complementary set of blocks, then a block in $B$ interlaces a block in $\bar{B}$. (Otherwise $B$ and $\bar{B}$ define a join and so $G$ is not prime.)
\item In any \rpbit, all of the half-edges in a block must belong to $A$ or to $A^c$. (Since there is a path in $G$ that does not pass through $v_i$ between every  pair of half-edges in a block, it would follow that there  is a component of $G|_A$ and one of $G|_{A^c}$ that share  more than one vertex, and so $A$ would not define an \rpbit.)
\item In any \rpbit,  the half-edges in interlacing  blocks must belong to different sets $A$ or  $A^c$. (Otherwise, since $G$ is prime, the \rpbi would contain a non-plane orientable $1$-summand, or a non-orientable $1$-summand of genus greater than $1$.) 
\end{enumerate}
From these observations it follows that assigning any edge incident to $v_i$ to either $A$, or to $A^c$,  determines a unique assignment of every edge that is incident to $v_i$ to either $A$ or  $A^c$. Thus the $1$-sum at $v_i$ in the \rpbi is determined by the assignment of a single edge to  $A$ or to $A^c$. 

From the three cases above, since $G$ is connected, the assignment of any edge $e$ to $A$ will completely determine an \rpbit, and the assignment of  $e$ to $A^c$ will completely determine a second \rpbit, and the result follows. 

The second claim is from \cite{Mo5} and can be proven by replacing ``\rpbit'' with ``\pbit'' in the above argument, in which case each $\mathcal{O}_i$ is empty. \end{proof}

\begin{proof}[Proof of Theorem~\ref{t.tog}]
We prove the \RP case first. Suppose that $G$ admits an \rpbit.
Every ribbon graph $G$ admits a unique prime factorization $G=H_1 \vee \cdots \vee H_r$, for some $r\geq 1$ (see \cite{Mo5}).
Every  \rpbi of $G$ is uniquely determined by choosing an \rpbi of $H_i$ (if $H_i$ is non-orientable) or \pbi   of $H_i$ (if $H_i$ is orientable), for each $i$. Also   every choice of \rpbi or \pbi for the $H_i$'s gives rise to an \rpbi of $G$.
By Lemma~\ref{l.prime}, each $H_i$ admits either exactly two \rpbist, or exactly two \pbist,   and these are obtained  by toggling the assignment of the edges in $E(H_i)$ to  $A$ and to $E(G)\bs A$. Such a move replaces a set $A\subseteq E(G)$ with $A\Delta E(H_i)$. 
It follows that   $A$ and $B$ both define \rpbis of $G$ if and only if $A$ can be obtained from $B$ by toggling join-summands.

The first item in the theorem follows by replacing ``\RPt'' with ``plane'' in the above argument. 
\end{proof}

Note that although \pbis and \rpbis are related to one another by toggling join-summands, this is not the case for \bis in general. However, the results in this subsection  can easily be  extended to all \bis in which there is at most one non-plane component.

\section{partial duals of the same genus}\label{s5}
We saw in Theorem~\ref{t2} that partial duals of plane and \RP graphs are completely characterized by \pbis and \RPt-\bist. In this section we use this characterization to study partially dual plane graphs and partially dual \RP graphs.  We begin by introducing the notions of 
  \pjbis and   \rpjbist. These are special types of \bis that are based on joins, rather than $1$-sums. We show, in Theorem~\ref{t3}, that partially dual plane graphs, and partially dual \RP graphs are characterized by \pjbis and   \rpjbist, respectively. We then use this fact to find a local move on ribbon graphs that relates all partially dual plane and \RP graphs. Again, the results for plane graphs presented in this section are from \cite{Mo5} and are included here to illustrate the unified approach to partial duality for low genus ribbon graphs.

\subsection{Join-biseparations}
By way of motivation, suppose that we have a ribbon graph $G$  that can be written as a sequence of joins $H_1\vee \cdots \vee H_l$ in which every join occurs at the {\em same} vertex.  
It  follows that for each $I\subseteq \{1, \ldots , l\}$, the set $A= \bigcup_{i\in I} E(H_i)$ defines a \bi of $G$. Using the fact that joins preserve genus,  and using Theorem~\ref{t1}, we see that $\gamma(G)=\sum_{i=1}^l \gamma(H_i)=\gamma(G^A)$. That is, for this type of \bit, partial duality does not change genus. We extend this type of \bi  by dropping the requirement that the joins all occur at the same vertex, to define a \jbit.
\begin{definition}\label{d.jbi}
Let $G=(V,E)$ be a ribbon graph and $A\subseteq E(G)$.    We say that $A$ defines a {\em \jbi} of $G$ if we can write $G=H_1\vee \cdots \vee H_l$, for some $l\geq 1$, where $A= \bigcup_{i\in I} E(H_i)$ for some $I\subseteq \{1, \ldots , l\}$.

If, in addition, each $H_i$ is plane we say that $A$ defines a {\em \pjbit}; and if exactly one $H_i$ is \RP and all of the others are plane, then we say that $A$ defines a {\em \rpjbit}.
\end{definition}

\begin{example}\label{e.jb} Two examples of \jbis are given below.
\begin{enumerate}
\item \label{e.jb1} Let $G$ be the ribbon graph in Figure~\ref{f.s1}. Then $\emptyset$, $\{5\}$, $\{15\}$, $\{5,15\}$, $E(G)\bs\{5,15\}$, $E(G)\bs\{5\}$,  $E(G)\bs\{15\}$ and $E(G)$ are all of the sets that define \jbis of $G$.

\item \label{e.jb2}
Let $A_1=\{1\}$, 
$A_2=\{2\}$, 
$A_3=\{3\}$,  and
$A_4=\{4,5\}$.  
Then $A$ defines an \rpjbi of the ribbon graph $G$ (or of $H$) shown in Figure~\ref{f.sb} if and only if $A= \bigcup_{i\in I} A_i$, for some $I\subseteq \{1, 2,3, 4\}$.
\end{enumerate}
\end{example}

\begin{figure}
\centering
\subfigure[A ribbon graph $G$.]{
\includegraphics[scale=.5]{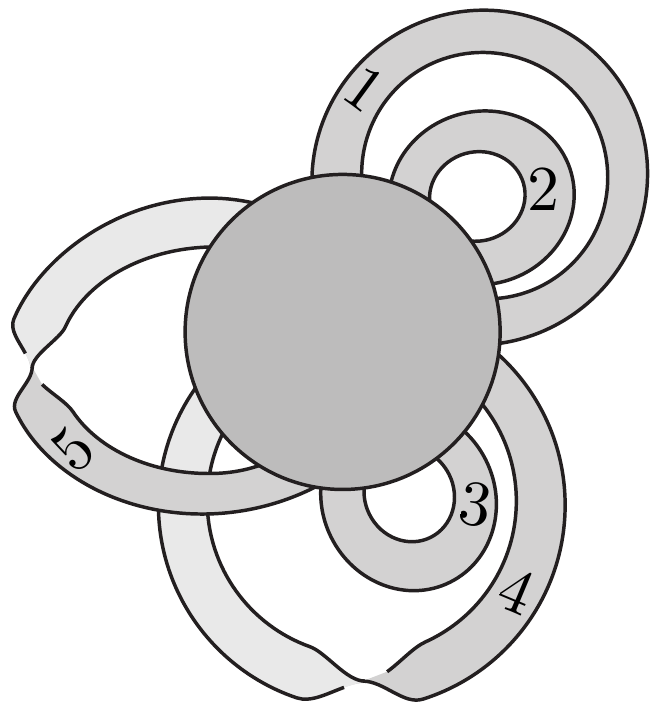}
\label{f.sb1}
}
\hspace{21mm}
\subfigure[A ribbon graph $H$.]{
\includegraphics[scale=0.5]{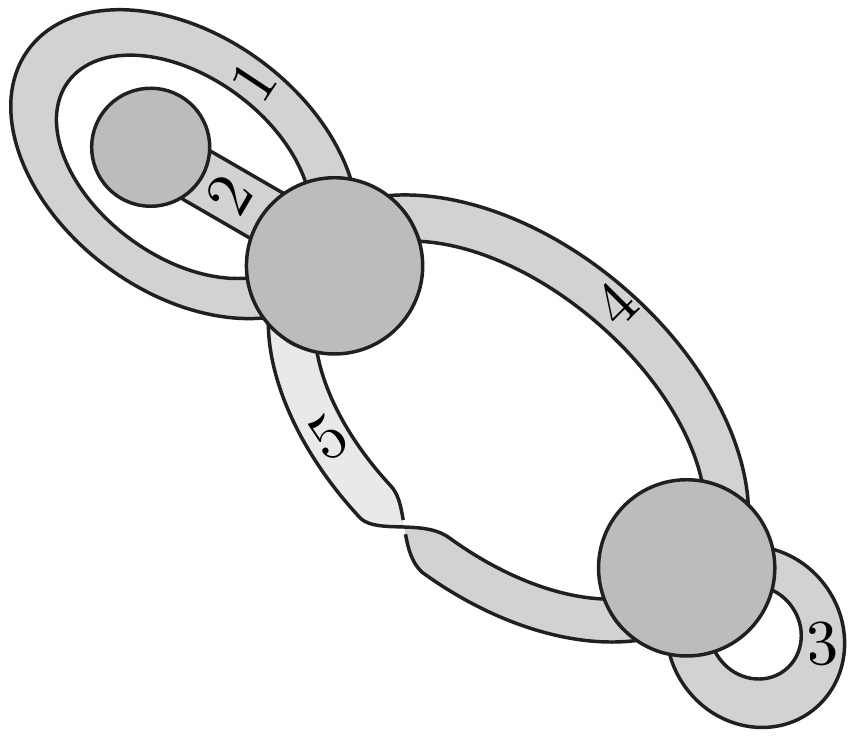}
\label{f.sb2}
}
\caption{Partially dual \RP ribbon graphs.}
\label{f.sb}
\end{figure}

For Definition~\ref{d.jbi}, we emphasize  that the joins need not occur at distinct vertices, that the join operation is not associative, and that the join summands need not be prime.

The following proposition states that \jbis are indeed \bist. In fact, we will see in Lemma~\ref{l2} that for plane graphs, \pjbis and \bis are equivalent; and that for \RP graphs, \rpjbis and \rpbis are equivalent. 
\begin{proposition}\label{p4}
Let $G$ be a connected ribbon graph and $A\subseteq E(G)$.  
\begin{enumerate}
\item\label{p4.1} If $A$ defines a \jbi of $G$, then it also defines a \bi of $G$. 
\item\label{p4.2}  If $A$ defines a \pjbi of $G$, then it also defines a \pbi of $G$. 
\item\label{p4.3}  If $A$ defines an \rpjbi of $G$, then it also defines an \rpbi of $G$. 
\end{enumerate}
\end{proposition}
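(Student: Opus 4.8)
The plan is to establish item~\eqref{p4.1}, and then read off items~\eqref{p4.2} and~\eqref{p4.3} from it, using that the Euler genus is additive under joins (so a join of plane ribbon graphs is plane, and a join of one \RP ribbon graph with plane ribbon graphs is \RPt).

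For item~\eqref{p4.1}, suppose $A$ defines a \jbit, so $G=H_1\vee\cdots\vee H_l$ with $A=\bigcup_{i\in I}E(H_i)$. If $I=\emptyset$ or $I=\{1,\ldots,l\}$ then $A=\emptyset$ or $A=E(G)$ and the \bi is trivial, so assume $I$ is a nonempty proper subset of $\{1,\ldots,l\}$. Since a join is a $1$-sum, this exhibits $G$ as a sequence of $1$-sums $G=H_1\oplus\cdots\oplus H_l$; thus $G$ is built from the $H_i$ by successively identifying them along cut vertices, and consequently every cycle of $G$ lies inside a single $H_i$ (each $H_i$ meets the rest of $G$ only in cut vertices, so every block, and hence every cycle, of $G$ lies in one $H_i$). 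The edge sets $E(H_i)$ partition $E(G)$, those with $i\in I$ partitioning $A$ and the rest partitioning $A^c$; hence each component of $G|_A$ is a union of $H_i$'s with $i\in I$ and each component of $G-A$ is a union of $H_i$'s with $i\notin I$, and in particular (as $G|_A$ and $G-A$ have no isolated vertices) every such component is connected and non-trivial.

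Next I would verify that these components assemble into a \bi as in Definition~\ref{d1}. If a component $C$ of $G|_A$ and a component $D$ of $G-A$ had two vertices in common, then, $C$ and $D$ being connected and edge-disjoint, $C\cup D$ would contain a cycle using an edge of $C$ and an edge of $D$, hence a cycle meeting at least two distinct $H_i$'s --- impossible by the previous paragraph; so $|V(C)\cap V(D)|\le 1$, and such a common vertex is a separating vertex of $G$, being a vertex at which a $1$-sum of $G$ occurs. By the same cycle argument the intersection graph $\mathcal T'$, with one vertex for each component of $G|_A$ and of $G-A$ and an edge whenever two components share a vertex, is connected (since $G$ is) and acyclic, hence a tree; moreover no two components of $G|_A$, and no two of $G-A$, share a vertex (they would otherwise lie in a common component). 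Feeding $\mathcal T'$ into the reordering procedure for sequences of $1$-sums recalled just before Proposition~\ref{p.start} then writes $G$ as a sequence of $1$-sums whose summands are exactly the components of $G|_A$ and $G-A$, each $1$-sum occurring at a shared vertex and therefore involving one component of $G|_A$ and one of $G-A$. By Definition~\ref{d1}, $A$ defines a \bit.

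Items~\eqref{p4.2} and~\eqref{p4.3} are then immediate: when $A$ defines a \pjbit every $H_i$ is plane, so every component of $G|_A$ and $G-A$, being a join of plane ribbon graphs, is plane, and the \bi just produced is a \pbit; when $A$ defines an \rpjbit exactly one $H_i$ is \RP and the rest are plane, so the component containing that $H_i$ is a join of one \RP ribbon graph with plane ones, hence \RPt, while every other component is a join of plane ribbon graphs, hence plane, and the \bi just produced is an \rpbit. The step I expect to need the most care is the assertion that every cycle of $G$ lies inside a single join-summand $H_i$: this is precisely where using joins (equivalently $1$-sums) rather than arbitrary $n$-sums is essential, and it is what forces $\mathcal T'$ to be a tree with the two sides of the decomposition interleaving correctly; the passage from $\mathcal T'$ to an explicit sequence of $1$-sums is then routine bookkeeping with the reordering machinery of Section~\ref{ss.seq}. (Alternatively, the whole of item~\eqref{p4.1} can be proved by induction on $l$, peeling off the last join-summand $H_l$ and either merging it into, or adjoining it to, the \bi of $H_1\vee\cdots\vee H_{l-1}$ supplied by the inductive hypothesis, reordering via Proposition~\ref{p.start} as needed.)
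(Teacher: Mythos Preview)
Your proof is correct and reaches the same conclusion as the paper's, but the route is somewhat different in emphasis. The paper argues very briefly: it lists the vertices $v_{\iota_1},\ldots,v_{\iota_{l-1}}$ at which the successive joins occur, extracts the distinct ones $v_1,\ldots,v_p$, and then simply asserts that the components $K_j$ of $G|_A$ and $G|_{A^c}$ can be $1$-summed in order at these vertices, with each $1$-sum involving one component from each side. Items~\eqref{p4.2} and~\eqref{p4.3} are deduced exactly as you do, by noting that each component is a join of some of the $H_i$ and that joins preserve genus and orientability.

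Your argument supplies the justification the paper omits: you verify explicitly, via the observation that every cycle of $G$ lies in a single $H_i$, that any $A$-component and any $A^c$-component meet in at most one vertex, and you organise the components into the intersection tree $\mathcal{T}'$ before invoking the reordering procedure of Section~\ref{ss.seq}. This is more careful than the paper's bare assertion, and the inductive alternative you sketch at the end is also perfectly sound. Two very minor points: when you write $G-A$ you really mean $G|_{A^c}$ (the former can have isolated vertices, the latter cannot, and Definition~\ref{d1} is phrased in terms of the latter); and the step ``by the same cycle argument $\mathcal{T}'$ is acyclic'' does work, but deserves one extra sentence --- a cycle in $\mathcal{T}'$ of length $\ge 4$ yields, after choosing shortest connecting paths inside the components, a cycle in $G$ meeting both an $A$-component and an $A^c$-component, contradicting that every cycle lies in a single $H_i$.
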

\begin{proof}
Suppose $G=H_1\vee \cdots \vee H_l$, where $l\geq 1$, and $A= \bigcup_{i\in I} H_i$, for some $I\subseteq \{1, \ldots , l\}$. 

For Item~\ref{p4.1}, suppose, as we read from left to right, the joins in  $H_1\vee \cdots \vee H_l$ occur at the vertices $v_{\iota_1}, v_{\iota_2}, \ldots, v_{\iota_{l-1}}$ of $G$ in that order. The $v_{\iota_j}$ need not all be distinct. Reading through this sequence of vertices from the left, let $v_1, \ldots, v_p$ be the sequence of distinct vertices obtained by taking the first occurrence of each vertex in the sequence. Let $K_1, \ldots , K_{p+1}$ be the components of $G|_A$ and $G|_{A^c}$. Then 
$G$ can then be written as $G=K_0 \oplus K_1 \oplus \cdots \oplus K_{p+1}$, where the $1$-sums occur at $v_1, \ldots, v_p$, in that order, and involve a component of $G|_A$ and of $G|_{A^c}$. Thus $A$ defines a \bi of $G$.

 Item~\ref{p4.2} follows since, if all of the $H_i$ are plane, then so are the components of  $G|_A= \cup_{i\in I} H_i$ and $G|_{A^c}= \cup_{i\not\in I} H_i$.
 
 Similarly,  Item~\ref{p4.3} follows since if exactly one  $H_i$ is \RP with all of the others plane, then one component of  $G|_A= \cup_{i\in I} H_i$ or of $G|_{A^c}= \cup_{i\not\in I} H_i$ is \RP and all of the others are plane.
\end{proof}

\subsection{Plane and \RP partial duals}
Suppose that $A$ defines an \rpjbi of $G$. Then $G$ is an \RP ribbon graph. By Proposition~\ref{p4}, every \rpjbi is an \rpbit, and it follows from Theorem~\ref{t2} that $G^A$ is also an \RP ribbon graph. Thus, if  $A$ defines an \rpjbi of $G$, then  $G$ and $G^A$ are partially dual \RP ribbon graphs. A similar statement holds for plane graphs. In the following theorem we show that the converses of these statements also hold, giving a characterization of plane and \RP partial duals.  
\begin{theorem}\label{t3}
Let $G$ be a connected ribbon graph, and  $A\subseteq E(G)$. Then
\begin{enumerate}
\item $G$ and $G^A$ are both plane if and only if $A$ defines a \pjbi of $G$;
\item $G$ and $G^A$ are both \RP if and only if $A$ defines an \rpjbi of $G$.
\end{enumerate}
\end{theorem}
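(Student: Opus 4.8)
The plan is to prove both directions of each item, though only one item really needs detailed argument since the plane case is formally the $\gamma=0$ analogue of the $\mathbb{R}\mathrm{P}^2$ case. The forward direction (the ``only if'' in each item) is the substantive content; the converse was essentially already disposed of in the paragraph preceding the theorem statement, using Proposition~\ref{p4} together with Theorem~\ref{t2}: if $A$ defines an \rpjbi then it defines an \rpbi, hence $G^A$ is \RP by Theorem~\ref{t2}\eqref{t2.2}, and $G$ is \RP because it is built from one \RP join-summand and plane ones (joins preserve genus). So in the write-up I would dispense with the converse in one or two sentences and concentrate on the forward implication.

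For the forward direction, suppose $G$ and $G^A$ are both \RP. By Theorem~\ref{t2}\eqref{t2.2}, $A$ defines an \rpbi of $G$, so we already have a decomposition $G = H_1 \oplus H_2 \oplus \cdots \oplus H_l$ into a sequence of $1$-sums, with the $H_i$ being the components of $G|_A$ and $G|_{A^c}$, exactly one of which is \RP and the rest plane. What remains is to \emph{upgrade} these $1$-sums to joins. The key point is that the ambient surface is already \RP: because $\gamma(G^A) = \gamma(G|_A) + \gamma(G|_{A^c})$ by Theorem~\ref{t1}, there is ``no room'' for a $1$-sum to create extra genus, and a $1$-sum that is not a join creates a genuinely higher-genus configuration upon partial dualisation at one of the summands. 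Concretely, I would argue that if some $1$-sum $H_i \oplus H_j$ occurring at a vertex $v$ were not a join, then locally at $v$ the edges of $H_i$ and of $H_j$ would interleave on the boundary of $v$; dualising the side contained in $A$ (or $A^c$) would then, by an analysis in the spirit of Lemma~\ref{l1} and the interlacing argument of Lemma~\ref{l.prime}, force an orientable non-plane summand or a non-orientable summand of genus $\ge 2$ to appear in $G^A$, contradicting $\gamma(G^A)=1$ together with the genus additivity. Thus every $1$-sum in the sequence must in fact be a join, giving $G = H_1 \vee \cdots \vee H_l$ with the prescribed genus distribution, i.e.\ an \rpjbi. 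The plane case is identical with ``$\gamma(G^A)=1$'' replaced by ``$\gamma(G^A)=0$'' and the \RP summand absent.

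**Main obstacle.** The delicate step is ruling out non-join $1$-sums, i.e.\ showing that if the two summands at a separating vertex $v$ interleave, then partial dualisation genuinely raises the Euler genus. This is where I expect to lean hardest on earlier material: Lemma~\ref{l1} already tells us that $n$-sums with $n\ge 2$ raise genus under dualisation, and the local interlacing analysis at a single vertex in the proof of Lemma~\ref{l.prime} gives exactly the combinatorial mechanism (interlacing blocks must land in opposite parts, and if they don't, a non-plane orientable or high-genus non-orientable summand appears). I would want to phrase this carefully: a non-join $1$-sum can be detected by the existence of an edge of $G^A$ that, after dualising, connects the two sides in a way that an annular (or Klein-bottle) neighbourhood is forced, exactly as in the handle-counting of Lemma~\ref{l1}. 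Making this local-to-global — i.e.\ concluding that \emph{all} the $1$-sums are joins, not just one — should follow by applying the one-vertex argument at each separating vertex in turn, using Proposition~\ref{p.start} to reorder so that the relevant summand can always be treated as the last (as in the proof of Theorem~\ref{t1}). Alternatively, and perhaps more cleanly, I would route the whole forward direction through Theorem~\ref{t2} plus a direct comparison of Euler characteristics: since $G$ and $G^A$ have the same Euler genus and the \rpbi decomposition realises that genus additively, the $1$-summands must be attached ``trivially'' (as joins), and this can be checked by a chromatic-number-style count of boundary components, avoiding re-deriving the interlacing lemma from scratch.
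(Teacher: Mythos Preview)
Your treatment of the converse is fine and matches the paper's.

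The forward direction, however, has a genuine gap. Your plan is to take the \rpbi decomposition $G=H_1\oplus\cdots\oplus H_l$ coming from Theorem~\ref{t2} and argue that each $1$-sum must already be a join, obtaining the contradiction from $\gamma(G^A)$: ``a $1$-sum that is not a join creates a genuinely higher-genus configuration upon partial dualisation''. But this is exactly what Theorem~\ref{t1} (via Lemma~\ref{l1}\eqref{l1.2}) says \emph{does not} happen: for \emph{any} biseparation, join or not, one has $\gamma(G^A)=\gamma(G|_A)+\gamma(G|_{A^c})$. So non-join $1$-sums in a biseparation never raise $\gamma(G^A)$, and there is no contradiction to extract from the partial dual. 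Your alternative ``Euler-characteristic count'' has the same defect: the additivity $\gamma(G^A)=\sum\gamma(H_i)$ is automatic from Theorem~\ref{t1} and says nothing about joins.

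There is a second, related problem: the components $H_i$ of $G|_A$ and $G|_{A^c}$ are in general \emph{not} the join-summands you need. A plane graph can easily have a plane-biseparation whose two induced subgraphs interlace at a separating vertex (take a vertex $v$ with cyclic half-edge order $a,e_1,b,e_2$, where $a,b$ lie in a path $P$ and $e_1,e_2$ are the ends of a loop $Q$); this $1$-sum is not a join, yet $A=E(P)$ still defines a \pjbi once one passes to the \emph{finer} decomposition $G=(\text{edge }a)\vee Q\vee(\text{edge }b)$. So ``upgrading each $1$-sum to a join'' is the wrong target; one must refine the decomposition.

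The paper handles this via Lemma~\ref{l2}, whose proof uses the hypothesis $\gamma(G)=1$ directly (not $\gamma(G^A)$): one prime-factorises the unique \RP summand to isolate a prime \RP piece $J$, reorganises so that every remaining (plane) piece is $1$-summed to $J$, and then argues topologically in the ambient $\mathbb{R}\mathrm{P}^2$ that any interlacing at a vertex of $J$ would force a cycle in a plane piece that is either contractible (contradicting primality of $J$) or non-contractible (contradicting planarity of the piece). The plane case of Lemma~\ref{l2} is then applied recursively inside each plane piece. Your proposal does not use $\gamma(G)=1$ in any essential way, and that is the missing ingredient.
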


To prove the theorem we  need the following  lemma, which provides a converse to Proposition~\ref{p4}.
\begin{lemma}\label{l2}
Let $G$ be a  connected  ribbon graph.
\begin{enumerate}
\item \label{l2.1}  If $G$ is plane and  $A\subseteq E(G)$ defines a \pbi of $G$ then $A$ also defines a \pjbi of $G$. 

\item \label{l2.2} If  $G$ is  \RP and   $A\subseteq E(G)$ defines an \rpbi  of $G$, then $A$ also defines an \rpjbi of $G$.
\end{enumerate}
\end{lemma}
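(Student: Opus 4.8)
My plan is to prove the two items together, since the \RP case only differs by the presence of a single non-orientable component, and to proceed by induction on the number of edges (or, equivalently, on the length of the biseparation). Suppose $G$ is plane (resp. \RP) and $A$ defines a \pbi (resp. \rpbit). If the biseparation is trivial the claim is immediate, so assume $G = H_1 \oplus H_2 \oplus \cdots \oplus H_l$ with $l \geq 2$, where the $H_i$ are the components of $G|_A$ and $G|_{A^c}$ and each $1$-sum involves one component of each. The key point I want to extract is that each of these $1$-sums is in fact a \emph{join}: that is, at the separating vertex $v$ where a given $1$-sum occurs, the half-edges of the two sides can be separated by an arc on the boundary of $v$. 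Once this is known for every $1$-sum in the sequence, the decomposition $G = H_1 \vee H_2 \vee \cdots \vee H_l$ (with the joins taken in the appropriate order) exhibits $A$ as $\bigcup_{i \in I} E(H_i)$, and since every $H_i$ is plane (resp. all but one plane, one \RP), this is exactly a \pjbi (resp. \rpjbit).

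So the heart of the argument is: \emph{if $G$ is plane and $G = P \oplus Q$ with the $1$-sum at $v$, then $G = P \vee Q$} (and the analogous \RP statement). I would prove this by contradiction using genus. If the $1$-sum at $v$ is not a join, then the half-edges of $P$ at $v$ and those of $Q$ at $v$ interlace around the boundary of $v$; but then $P \oplus Q$ cannot be embedded with all its regions discs in a genus-$0$ surface — more precisely, one can compute that $\gamma(P \oplus Q) \geq \gamma(P) + \gamma(Q) + 1$ when the summands interlace at $v$, contradicting $\gamma(G) = 0$ (resp. forcing $\gamma(G) \geq 2$, contradicting $\gamma(G) = 1$ unless there is exactly one point of interlacing and both summands are plane, which must be handled). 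Concretely, I would take cellular embeddings of $P$ and $Q$, observe that forming $P \oplus Q$ at $v$ amounts to merging the two vertex-discs along their boundaries respecting the cyclic orders, and note that interlacing half-edge blocks force the region of $G$ containing that part of $\partial v$ to be non-planar or non-cellular — this is precisely the kind of handle-counting already carried out in the proof of Lemma~\ref{l1}. An alternative, possibly cleaner, route is to invoke that a plane ribbon graph is exactly a ribbon graph whose underlying graph is planar and whose rotation system is the planar one, and then argue combinatorially that a separating vertex of a plane ribbon graph always admits the separating arc; but the genus argument is more self-contained given what is available in the paper.

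The induction then runs as follows. Having shown the first $1$-sum $H_i \oplus H_l$ (at a leaf $H_l$ of the associated tree, as in the proof of Theorem~\ref{t1}) is actually a join $H_i \vee H_l$, replace $H_i$ by $H_i \vee H_l$ and delete $H_l$: this gives a ribbon graph $G'$ with fewer edges, still plane (resp. \RP), with the induced edge set $A \setminus E(H_l)$ (or $A$, depending on which side $H_l$ lies on) still defining a \pbi (resp. \rpbit) of $G'$ of length $l-1$. By the inductive hypothesis $G'$ decomposes as a sequence of joins realizing this biseparation as a \pjbi (resp. \rpjbit); re-inserting the join $\,\vee\, H_l$ back into the summand $H_i$ (or rather, noting $H_i \vee H_l$ was one join-summand that we may now split) yields the desired join decomposition of $G$. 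A small bookkeeping point: the join $H_i \vee H_l$ may need to be recorded as two separate join-summands $H_i$ and $H_l$ in the final sequence, which is legitimate since the join operation, read left to right in the chosen order, permits joining $H_l$ onto $H_i$ at the relevant vertex.

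**Main obstacle.** The technical crux is the plane/\RP case of the claim "a $1$-sum at a separating vertex of a low-genus ribbon graph is automatically a join." Proving that interlacing of the two half-edge blocks at the shared vertex strictly increases the Euler genus requires a careful local analysis of the cellular embedding near $v$ — essentially re-deriving a special case of Lemma~\ref{l1} (with $n=1$ but "twisted" attachment). For the \RP case one must additionally rule out the scenario where exactly one interlacing creates genus exactly $1$: one shows this would force \emph{both} summands to be plane \emph{and} the unique non-contractible cycle of $G$ to pass through $v$ in a way incompatible with $v$ being a cut vertex of the underlying graph, a contradiction. Everything else — the induction skeleton, the translation between sequences of $1$-sums and sequences of joins, and the tree-reordering — is routine given Proposition~\ref{p.start}, Theorem~\ref{t1}, and the discussion of associativity of $n$-sums in Subsection~\ref{ss.seq}.
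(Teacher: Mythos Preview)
Your central claim --- that in a \pbi (or \rpbit) of a plane (or \RP) ribbon graph, every $1$-sum $H_i \oplus H_l$ in the sequence is automatically a join --- is false. Take the plane ribbon graph $G$ with vertices $v,u_1,u_2$, pendant edges $e_1=vu_1$, $e_2=vu_2$, and an untwisted loop $f$ at $v$, with cyclic order $e_1,f,e_2,f$ at $v$. Setting $A=\{e_1,e_2\}$, both $G|_A$ (a path) and $G|_{A^c}$ (a loop) are connected, so $A$ defines a \pbi $G=P\oplus Q$ of length $2$; but the half-edges of $P$ and $Q$ interlace at $v$, so this is \emph{not} a join. (The conclusion of the lemma still holds here: $A$ defines a \pjbi via the finer decomposition $G=\langle e_1\rangle\vee\langle f\rangle\vee\langle e_2\rangle$.) Your genus argument breaks precisely because interlacing half-edges at $v$ only force extra genus when those half-edges lie on \emph{cycles} through $v$; when they are bridges, as $e_1,e_2$ are here, no handle is created. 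The handle-counting of Lemma~\ref{l1} does not cover this situation --- that lemma addresses $n$-sums with $n\ge 2$, not interlaced $1$-sums.

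The paper's proof avoids this trap by \emph{not} attempting to show that the given \bi is itself a sequence of joins. Instead (for Item~\ref{l2.2}) it first extracts a prime \RP subgraph $J$, then rebuilds the remaining pieces $I_1,\ldots,I_m$ so that each $I_j$ meets $J$ at a single vertex of $J$ and, crucially, so that any pair of half-edges of $I_j$ at that vertex lie on a cycle of $I_j$ (this is arranged by first taking prime factorizations of the components of $G-E(J)$ and re-amalgamating factors away from $V(J)$). Only with this extra cycle guarantee does the interlacing argument (contractible cycle contradicts primality of $J$; non-contractible contradicts planarity of $I_j$) go through to show each $J\oplus I_j$ is a join. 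The plane pieces $I_j$ are then handled by invoking Item~\ref{l2.1}, which the paper imports from \cite{Mo5}. Your inductive reduction also needs repair: merging $H_i$ and $H_l$ into one summand changes the edge set defining the \bi from $A$ to $A\Delta E(H_l)$, and it is not clear how the join decomposition for this new set yields one for $A$ itself.
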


\begin{proof}
Item~\ref{l2.1} is from \cite{Mo5}.

For Item~\ref{l2.2}, if $A=E(G)$ or $A=\emptyset$ the result is trivial, so assume that this is not the case.  Then we can write
\begin{equation}\label{e.1v1}G=   K \oplus H_1  \oplus \cdots\oplus H_l,\end{equation}
where $K$ is an \RP ribbon graph and the $H_i$ are plane. (We are using Proposition~\ref{p.start}  to ensure that $K$ is the first $1$-summand.)
As every ribbon graph  admits a  prime factorization (\cite{Mo5}), we can write $K$ as a sequence of joins 
\begin{equation}\label{e.1v2}K=  J\vee J_1\vee \cdots \vee J_p,   \end{equation}
where $J$ is \RPt; the $J_i$ are plane (as $K$ is \RPt); and $J$ is prime, {\em i.e.}, it can not be written as a join of ribbon subgraphs.  Substituting \eqref{e.1v2} into \eqref{e.1v1} gives 
\begin{equation}\label{e.1v3}G=  J\vee J_1\vee \cdots \vee J_p\oplus H_1  \oplus \cdots\oplus H_l,   \end{equation}
where $J$ is \RP and the other terms are plane.

Now let $I_1, \ldots , I_m$ be the set of ribbon graphs obtained by taking prime factorizations of the set of components of $G-E(J)$, and then joining together all of the prime join-summands  that do not occur at a vertex in $V(J)$.
 Then (using the fact that $J$ is a $1$-summand in Equation~\eqref{e.1v3}) we can write
\begin{equation}\label{e.1v4}G=  J\oplus I_1  \oplus \cdots\oplus I_m.  \end{equation}
Each $I_j$ is plane (since if  $I_j$ was orientable and non-plane,  $G$ would not be \RPt; and if any $I_j$ was non-orientable it would follow that one of the $J_i$ or $H_i$ in \eqref{e.1v3} is non-orientable). Also, every $1$-sum in Equation~\eqref{e.1v4} occurs at a (not necessarily distinct) vertex of $J$.

We will now show that each $1$-sum in  Equation~\eqref{e.1v4} is in fact a join. 
Suppose that one of the $1$-sums is not a join. Suppose also that the $1$-sum occurs at a vertex $v$ and involves $I_j$ and (necessarily) $J$. Then $I_j$ must have two half-edges $e$ and $e'$ that are interlaced by  half-edges $f$ and $f'$ of $J$ when reading around $v$ with respect to either cyclic order (so the edges are met in the cyclic order $e\, f\, e'\, f'$). Now, $e$ and $e'$ must belong to a cycle $C$ in $I_j$ (otherwise the $I_j$ have not been constructed properly as it could be expressed as a join at a vertex of $J$ in $G-E(J)$). When $G$ is cellularly embedded in \RPt, the cycle $C$ can not be contractible (otherwise $J$ is not prime). Also, the cycle $C$ can not be non-contractible as (otherwise $I_j$ is not plane). This gives a contradiction. It follows that every $1$-sum in \eqref{e.1v4} must be a join.     
Thus we can write 
 \begin{equation}\label{e.1v5}G=  J\vee I_1  \vee \cdots\vee I_m,  \end{equation}
where each $I_j$ is plane and $J$ is \RPt.

Now, since each $I_j$ is a plane ribbon subgraph of $G$, the set $A\cap E(I_j)$ defines a \pbi of $I_j$. Then, by Item~\ref{l2.1} of the lemma, $A\cap E(I_j)$  defines a \pjbi of $I_j$. Thus we can write 
\[  I_j  = I_{j,1}  \vee \cdots\vee I_{j,p_j} ,  \]
where $A\cap E(I_j) = \bigcup_{k\in K} E(I_{j,k})$, for some index $K$.
Using this and Equation~\eqref{e.1v5}  we then have that 
\[   G=  J \vee I_{1,1}  \vee \cdots\vee I_{m,p_m},   \]
where $A=  \bigcup_{(l,k)\in I} E(I_{l,k})$ or $A= E(J)  \bigcup_{(l,k)\in I} E(I_{l,k})$ for a suitable index $I$, and thus $A$ defines an \rpjbi of $G$ as required.
\end{proof}


\begin{proof}[Proof of Theorem~\ref{t3}.]
We will prove the second item first.
If  $G^A$ is an \RP ribbon graph then, by Theorem~\ref{t2}, $A$ defines an \rpbi of $G$. As $G$ is \RPt, it follows from Lemma~\ref{l2} that  $A$  defines an \rpjbi of $G$. 

Conversely,  if $A$ defines an \rpjbi of $G$, then by Proposition~\ref{p4} it also defines an \rpbi of $G$, and so $G^A$ is an \RP ribbon graph by Theorem~\ref{t2}. Also, as joins preserve genus and orientability, $G$ is also an \RP ribbon graph.

The first item of the theorem follows by replacing ``\RP'' with ``plane'' in the above argument.
\end{proof}

\begin{remark}
The characterization of partially dual plane and \RP graphs given in Theorem~\ref{t3} does not extend to higher genus ribbon graphs. That is, if $G$ is a ribbon graph and $A\subseteq E(G)$ such that $\ga(G)=\ga(G^A)\geq 2$, it does not follow that $A$ defines a \jbi of $G$.  In fact, even if $G$ is a ribbon graph and $A\subseteq E(G)$ defines a \bi of $G$ such that $\gamma(G)=\ga(G^A)\geq 2$, it still does not follow that $A$ defines a \jbi of $G$. For example, let $G$ be the orientable ribbon graph with one vertex and  three edges $a,b,c$ that are met in the cyclic order $a\,b\,c\,a\,c\,b$, and let $A=\{a\}$. Then $A$ defines a \bi of $G$ with $\gamma(G)=\gamma(G^A)=2$, but $A$ does not define a \jbi of $G$. A non-orientable example with  $\gamma=2$ can be obtained  by adding a half-twist to the edges $b$ and $c$ in the above example. Higher genus examples can be obtained by joining toroidal or \RP ribbon graphs to these two examples.

It is also worth noting that while it follows from  Lemma~\ref{l2} that every \bi of a plane graph is a \jbit, it is not true, however, that every \bi of an \RP graph is an \rpbit. For example let $G$ be the \RP ribbon graph  with one vertex and two edges $a,b$ in the cyclic order $a\,b\,a\,b$. Then $A=\{a\}$ defines a \bi that is not an \rpbit.
\end{remark}

\subsection{Relating plane and \RP partial duals}
We now introduce a simple local move on ribbon graphs, called dualling a  join-summand. We go on to  show that this move  relates all  partially dual \RP and plane ribbon graphs.

\begin{definition}
Let $G=P\vee Q$ be a ribbon graph. We  say that the ribbon graph   $G^{E(Q) } =  P\vee Q^{ E(Q)}=P \vee Q^*$ is obtained from $G$ by a  {\em dual-of-a-join-summand move}. We say that two ribbon graphs are related by {\em dualling  join-summands} if there is a sequence of  dual-of-a-join-summand moves taking one to the other, or if they are geometric duals. 
\end{definition}

\begin{theorem}\label{t4}
Let $G$ and $H$ be connected ribbon graphs.
\begin{enumerate}
\item If $G$ and $H$ are both plane, then they are partial duals of each other if and only if they are related by dualling join-summands.
\item  If $G$ and $H$ are both \RPt, then they are partial duals of each other  if and only if they are related by dualling join-summands.
\end{enumerate}
\end{theorem}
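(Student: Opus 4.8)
The plan is to prove item~2 in full; item~1 then follows by repeating the argument with ``plane'' in place of ``\RPt'' (using the plane halves of Theorems~\ref{t2} and~\ref{t3} in place of the \RP ones), and is in any case proved in \cite{Mo5}. First the ``if'' direction. Suppose $G$ and $H$ are related by dualling join-summands. A dual-of-a-join-summand move sends a ribbon graph $K=P\vee Q$ to $K^{E(Q)}$, which is a partial dual of $K$; the geometric dual of $K$ is the partial dual $K^{E(K)}$ (Proposition~\ref{p.pd2}); and a composition of partial dualities is again a partial duality, since $(K^{A})^{B}=K^{A\Delta B}$ (Proposition~\ref{p.pd2}). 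Hence a sequence of such moves carries $G$ to some $G^{B}$, so $H=G^{B}$ and the two are partial duals. (Note this direction does not use the hypothesis that $G$ and $H$ are \RPt.)

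For the ``only if'' direction, suppose $G$ and $H$ are \RP and $H=G^{A}$ for some $A\subseteq E(G)$. By Theorem~\ref{t3}, $A$ defines an \rpjbi of $G$, so we may write $G=H_1\vee\cdots\vee H_l$ with exactly one $H_j$ an \RP ribbon graph, every other $H_i$ plane, and $A=\bigcup_{i\in I}E(H_i)$ for some $I\subseteq\{1,\dots,l\}$. As the edge sets $E(H_i)$ are pairwise disjoint, iterating Proposition~\ref{p.pd2} gives $G^{A}=\bigl(\cdots\bigl(G^{E(H_{i_1})}\bigr)^{E(H_{i_2})}\cdots\bigr)^{E(H_{i_k})}$, where $I=\{i_1,\dots,i_k\}$, so it is enough to realise each of these single-summand partial dualisations by dual-of-a-join-summand moves. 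The key identity is: if $K=K_1\vee\cdots\vee K_m$ and $1\le t\le m$, then $K^{E(K_t)}=K_1\vee\cdots\vee K_t^{*}\vee\cdots\vee K_m$; that is, partial dualising along the edges of a join-summand replaces that summand by its geometric dual and leaves the rest of the decomposition unchanged. I would prove this from the ``locality'' of partial duality: by Proposition~\ref{p3}, $K^{E(K_t)}=(K\,\vec{-}\,E(K_t)^{c})^{*}\vec{+}\,E(K_t)^{c}$, and deleting the edges outside $K_t$ leaves $K_t$ carrying marking arrows that, by the join condition, lie on arcs of its vertices disjoint from $K_t$'s own edge attachments; dualising this arrow-marked ribbon graph yields $K_t^{*}$ with the arrows transferred (duality acts disjointly on components and carries marking arrows to the dual), and re-adding the deleted edges reassembles the remaining summands around $K_t^{*}$ in the original pattern. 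Alternatively one peels the other summands off one at a time using the third part of Proposition~\ref{p1} to reduce to a two-term join $K'\vee K_t$, where $K^{E(K_t)}=K'\vee K_t^{*}$ is just the definition of a dual-of-a-join-summand move (together with Proposition~\ref{p.pd2}).

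Granting this identity, dualising the summands indexed by $i_1,\dots,i_k$ in turn transforms $G$ into the ribbon graph obtained from $G=H_1\vee\cdots\vee H_l$ by replacing $H_i$ with $H_i^{*}$ for each $i\in I$, which equals $G^{A}=H$; since geometric duality preserves genus and orientability, each intermediate graph is again a join of plane ribbon graphs with a single \RP summand, so the construction can be continued. Whenever the summand being dualised is \emph{peripheral} — expressible as the factor $Q$ in a two-term splitting of the current graph — the step is literally one dual-of-a-join-summand move. When it is \emph{interior}, one instead first dualises a whole branch hanging off a single vertex (a legitimate two-term join precisely because the original decomposition consisted of joins), thereby over-dualising several summands, and then corrects the over-dualised branches one at a time by further such moves; the net effect is to dualise just that interior summand. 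Carrying this out over all of $I$ exhibits $G$ and $H$ as related by dualling join-summands. (In the extreme case $l=1$ the graph is prime and $A=\emptyset$ or $A=E(G)$, and one uses directly that a ribbon graph and its geometric dual are related by dualling join-summands.)

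The step I expect to be the main obstacle is the non-associativity of the join: an interior join-summand is not the second factor of any two-term decomposition $G=P\vee Q$, so dualising it is not a single move, and one must verify that the ``dualise a branch, then correct'' telescoping above always reaches the desired final configuration. I expect this is cleanest by induction on the number of prime join-factors, using Proposition~\ref{p.start} and the commutativity of the join to expose a peripheral factor at each stage. The remaining ingredients — the two implications, the reduction via Theorem~\ref{t3} and Proposition~\ref{p.pd2}, the locality identity, and the genus and orientability bookkeeping — are routine.
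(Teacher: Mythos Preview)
Your proposal is correct and follows essentially the same route as the paper. The paper packages your ``dualise a branch, then correct'' step as a separate lemma (its Lemma~\ref{l5}): first it records the locality identity $(P\vee Q)^A=P^{E(P)\cap A}\vee Q^{E(Q)\cap A}$ (your ``key identity''), and then it shows that for any $G=H_1\vee\cdots\vee H_l$ the graphs $G$ and $G^{E(H_i)}$ are related by \emph{two} dual-of-a-join-summand moves---dualising the connected ``branch'' $K$ containing $H_i$ in $G-(E(H_1)\cup\cdots\cup E(H_{i-1}))$, then dualising $K\setminus H_i$ back---rather than via an induction; but this is exactly the telescoping you describe, just organised more tightly.
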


\begin{example}
 The \RP ribbon graphs shown in Figure~\ref{f.sb} are partial duals: $H=G^{\{2,4,5\}}$.  It is readily checked that  $H$ can be obtained from $G$ by dualling the  join-summands determined by the following edge sets in the given order: $\{ 3,4,5 \}$ then $\{3\}$ then $\{2\}$. This sequence is not unique.
\end{example}

We will use the following lemma in the proof of Theorem~\ref{t4}.
\begin{lemma}\label{l5}
~
\begin{enumerate}
\item \label{l5.1} $(P\vee Q)^A = P^{E(P)\cap A} \vee Q^{E(Q)\cap A}$.
\item\label{l5.2}  If $G= H_1\vee \cdots \vee H_l$, then, for each $i$, $G$ and $G^{E(H_i)}$ are related by dualling join summands.
\end{enumerate}
\end{lemma}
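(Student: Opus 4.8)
The plan is to establish both items directly from the definitions and the basic properties of partial duality recorded in Proposition~\ref{p.pd2} and Proposition~\ref{p3}. For Item~\ref{l5.1}, first I would recall that a join $G = P \vee Q$ occurring at a vertex $v$ means that there is an arc on $\partial v$ meeting all half-edges of $P$ and a complementary arc meeting all half-edges of $Q$. Writing $A = A_P \sqcup A_Q$ with $A_P = E(P)\cap A$ and $A_Q = E(Q)\cap A$, I would form $G^A$ one edge at a time (using Proposition~\ref{p.pd2}, partial duals can be formed one edge at a time), and observe that since partial duality of an edge $e$ only modifies the local picture of the spanning subgraph at the vertices incident to $e$, dualling an edge of $P$ never moves a half-edge of $P$ past the "separating arc" at $v$, and likewise for $Q$. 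More precisely, I would use Proposition~\ref{p3}: $G^A = (G \,\vec{-}\, A^c)^* \vec{+}\, A^c$. In $G\,\vec{-}\,A^c$ the marking arrows coming from edges of $P$ all sit on one arc of $\partial v$ and those from $Q$ on the complementary arc; taking the geometric dual replaces each vertex-disc by a collection of discs obtained by cutting along the boundary, and because the two arcs are separated, the dual operation decomposes as a "disjoint" operation on the two sides, so $(G\,\vec{-}\,A^c)^*$ is again visibly a join at a vertex of the appropriate piece. Re-adding the edges of $A^c$ respects this decomposition. Assembling, $G^A = P^{A_P} \vee Q^{A_Q}$, which is the claim. (Strictly, one should induct on $|E(Q)|$, say, peeling off one edge of $Q$ at a time, which keeps everything in the two-summand form; the base case $Q$ having no edges is trivial.)

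For Item~\ref{l5.2}, given $G = H_1 \vee \cdots \vee H_l$, I would first use associativity-type bookkeeping (as discussed in Subsection~\ref{ss.seq}: since the join is a $1$-sum, and in a sequence of $1$-sums the order may be permuted via the associated tree, Proposition~\ref{p.start}) to rewrite $G$ as $G = H_i \vee \bigl( H_1 \vee \cdots \widehat{H_i} \cdots \vee H_l \bigr)$, i.e.\ isolate $H_i$ as one join-summand and the rest, call it $R$, as the other, so $G = H_i \vee R$. Then I apply Item~\ref{l5.1} with $A = E(H_i)$: since $E(H_i) \cap E(R) = \emptyset$ and $E(H_i)\cap E(H_i) = E(H_i)$, we get $G^{E(H_i)} = H_i^{E(H_i)} \vee R^{\emptyset} = H_i^* \vee R$, using $H_i^{E(H_i)} = H_i^*$ (Proposition~\ref{p.pd2}\eqref{p.pd2.2}) and $R^{\emptyset} = R$ (Proposition~\ref{p.pd2}\eqref{p.pd2.1}). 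That is exactly one dual-of-a-join-summand move applied to the decomposition $G = R \vee H_i$, so $G$ and $G^{E(H_i)}$ are related by dualling join-summands.

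The main obstacle I anticipate is making the geometric-dual step in Item~\ref{l5.1} rigorous: one must check carefully that forming $(G\,\vec{-}\,A^c)^*$ genuinely respects the partition of the vertex boundary at the join vertex $v$, i.e.\ that the dual construction (fill in punctures with discs, delete old vertices) does not "connect" the $P$-side marking arrows to the $Q$-side ones through the new dual vertices. This is intuitively clear because the dual vertices correspond to face-discs of $G\,\vec{-}\,A^c$ and at $v$ the two arcs are genuinely separated on $\partial v$, but spelling it out cleanly — perhaps by induction, dualling a single edge at a time and tracking boundary cycles, or by directly invoking the arrow-presentation description of a join — is where the real work lies. Everything else is routine unwinding of Proposition~\ref{p.pd2} and the definitions.
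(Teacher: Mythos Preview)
Your sketch for Item~\ref{l5.1} is reasonable (the paper simply cites \cite{Mo5} for this), but your argument for Item~\ref{l5.2} has a genuine gap.

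You claim that Proposition~\ref{p.start} lets you rewrite $G = H_i \vee R$ with $R = H_1 \vee \cdots \widehat{H_i} \cdots \vee H_l$ a single connected join-summand. Proposition~\ref{p.start} only guarantees that the \emph{sequence} can be reordered with $H_i$ first; it does not say the remaining summands can be collected into one connected ribbon graph joined to $H_i$ at a single vertex. Indeed this is false in general: if $H_i$ is an internal node of the associated tree, deleting $E(H_i)$ disconnects $G$. Concretely, take $G = H_1 \vee H_2 \vee H_3$ where $H_1$ is joined to $H_2$ at a vertex $u$ and $H_3$ is joined to $H_2$ at a different vertex $v$. Then $H_1$ and $H_3$ share no vertex, so there is no connected $R$ with $G = H_2 \vee R$. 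Consequently $G \to G^{E(H_2)}$ is \emph{not} a single dual-of-a-join-summand move, and your argument collapses.

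The paper's proof gets around exactly this obstruction by using \emph{two} moves rather than one. It takes $K$ to be the component of $G - (E(H_1)\cup\cdots\cup E(H_{i-1}))$ containing $H_i$, so that $K = H_i \vee H_{i_1}\vee\cdots\vee H_{i_p}$ genuinely sits as a single join-summand of $G$. The first move dualises $K$; then, using Item~\ref{l5.1} to write $K^* = H_i^* \vee H_{i_1}^*\vee\cdots\vee H_{i_p}^*$, the second move dualises $H_{i_1}^*\vee\cdots\vee H_{i_p}^*$ back. The composite is $G^{E(H_i)}$. So the missing idea in your plan is that when $H_i$ is not a leaf of the join tree you must pass through an intermediate ribbon graph, and the choice of $K$ (the ``downstream'' part from $H_i$) is what makes both steps legitimate join-summand moves.
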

\begin{proof}
Item~\ref{l5.1} is from \cite{Mo5}.

For Item~\ref{l5.2}, let 
\begin{equation}\label{l5.e1}G= H_1\vee \cdots \vee H_l.\end{equation}
In $G- (E(H_1)\cup\cdots \cup E(H_{i-1}))$, let 
 $K=H_i\vee H_{i_1} \vee \cdots \vee H_{i_p}$ be the component  that contains $H_i$, and let $J_1, \ldots, J_q$ denote the other non-trivial components.
We can reorder the joins in Equation~\eqref{l5.e1} to get
\begin{equation*}
G= H_1\vee \cdots \vee H_{i-1} \vee J_1 \vee \cdots \vee J_q \vee K 
= H_1\vee \cdots \vee H_{i-1} \vee J_1 \vee \cdots \vee J_q \vee H_i\vee H_{i_1} \vee \cdots \vee H_{i_p}.
\end{equation*}
Then, by Item~\ref{l5.1} of the Lemma and Proposition~\ref{p.pd2}, we have
\begin{multline*}
G^{E(H_i)} = (G^{E(K)})^{E(K)\bs E(H_i)}
= (H_1\vee \cdots \vee J_q \vee K^*)^{E(K)\bs E(H_i)} 
=  (H_1\vee \cdots \vee J_q \vee H_i^*\vee H_{i_1}^* \vee \cdots \vee H_{i_p}^*)^{E(K)\bs E(H_i)} \\
= H_1\vee \cdots \vee J_q \vee H_i^*\vee (H_{i_1}^* \vee \cdots \vee H_{i_p}^*)^{E(K)\bs E(H_i)}
= H_1\vee \cdots \vee J_q \vee H_i^*\vee H_{i_1} \vee \cdots \vee H_{i_p})\\
= H_1\vee \cdots  \vee H_i^* \vee \cdots \vee H_l.
\end{multline*}
Upon observing that the above sequence is just the application of two dual-of-a-join-summand moves, the result follows.
\end{proof}

\begin{proof}[Proof of Theorem~\ref{t4}] 
It is clear that if $G$ and $H$ are related by dualling  join-summands then they are partial duals.

Conversely, suppose that $H=G^A$ for some $A\subseteq E(G)$.
Since  $G$ and $G^A$ are both plane or both  \RP ribbon graphs, it follows from Theorem~\ref{t3} that
$G= H_1\vee \cdots \vee H_l $, where $l\geq 1$,   $A=\bigcup_{i\in I} E(H_i) $, and $I=\{\iota_1, \ldots , \iota_p\}$.  We can then write
\[ G^A=  (\cdots((H_1\vee \cdots \vee H_l)^{H_{\iota_1}})^{H_{\iota_2}}) \cdots )^{H_{\iota_p}} , \]
which by Item~\ref{l5.2} of Lemma~\ref{l5} can be obtained from $G$ by a sequence of  dual-of-a-join-summand moves.
\end{proof}

\section*{Acknowledgements}
I would like to thank Lowell Abrams for stimulating conversations.

\end{document}